\newtheorem{thm}{Theorem}[section]
\newtheorem{lem}[thm]{Lemma}
\newtheorem{cor}[thm]{Corollary}
\newtheorem{prop}[thm]{Proposition}
\newtheorem{rem}[thm]{Remark}
\newtheorem{defn}[thm]{Definition}
\newcommand{\be}{\begin{equation}}
\newcommand{\ee}{\end{equation}}
\newcommand{\bea}{\begin{eqnarray}}
\newcommand{\eea}{\end{eqnarray}}
\newcommand{\Sinhq}{\text{Sinh}_q}
\newcommand{\E}{\varepsilon_q}
\newcommand{\qbinom}[2] {{#1 \brack #2}_q}
\newcommand{\re}{\text{Re}\,}
\newcommand{\Sin}{\text{Sin}}
\newcommand{\Cos}{\text{Cos}}
\begin{document}



\ShortArticleName{On $q$-analogs of zeta functions}

\ArticleName{On $q$-analogs  of  zeta functions associated  with  a pair of $q$-analogs of Bernoulli numbers and polynomials}

\Author{Ahmad  El-Guindy and  Zeinab Mansour}

\AuthorNameForHeading{A.M. El-Guindy and  Z.S.I. Mansour }

\Address{Mathematics Department, Faculty of Science, Cairo University, Giza, Egypt}

\Email{\href{mailto:elguindy@sci.cu.edu.eg}{aelguindy@sci.cu.edu.eg},
\href{mailto:zeinab@sci.cu.edu.eg}{zeinab@sci.cu.edu.eg}} 



\Abstract{
In this paper, we use two different approaches  to  introduce  $q$-analogs of Riemann's zeta function and prove that their  values at  even integers are related to the $q$-Bernoulli and $q$ Euler's numbers introduced by Ismail and Mansour [Analysis and Applications, {\bf{17}}, 6, 2019, 853--895]. 
}
\Keywords{
$q$-Bernoulli and Euler numbers, $q$-zeta function,  contour integration,  Rayleigh functions .}
\Classification{11B68, 33E99} 

\section{Introduction }

The Riemann zeta function $ \zeta(s)$ is the analytic function of the complex variable $s$, defined 
 for $\re(s)>1$  by 
$\zeta(s)=\sum_{k=1}^{\infty}\frac{1}{k^s}.$
 It  extends  on the complex plane to a meromorphic function with only  one 
 simple pole at $z=1$  with residue 1. 
  Moreover, 
for $n\in\mathbb{N}$
\be \label{Id0}
\zeta(2n)=2^{2n-1} (\pi)^{2n}(-1)^{n-1}\frac{\beta_{2n}}{2n!}
= (-1)^{n-1}\frac{2^{2n-2}}{1-4^n}(\pi)^{2n}\frac{{E}^*_{2n-1}}{(2n-1)!},
\ee
where $(\beta_n)_n$ are the classical  Bernoulli numbers and $(E_n^*)_n$ are the first-kind Euler numbers, see \cite{Apostol,Titchmarsh-Brown}. The alternating zeta function  or the Dirichlet eta function  is defined by
$\eta(s)=\sum_{k=1}^{\infty}\frac{(-1)^{k-1}}{k^s},\quad \re s>0$, see ~\cite{Sondow}.
It is known that $\eta(s)=(1-2^{1-s})  \zeta(s)$.  Both of  $\zeta(s)$ and $\eta(s)$ have  integral  and contour integral representations in terms of the generating functions of Bernoulli and Euler polynomials, see~\cite{Apostol}.  Using those representations,  many   identities can be proved, such as the identities  in \eqref{Id0} and the functional equation of the zeta function. In this paper, we introduce $q$-analogs of the zeta and eta functions.  Generally speaking, if $f_q(z)$ is a family of functions defined for $0<q<1$ (and $z$ in some domain), and if there is a function $f(z)$ for which $\lim_{q\to 1^-}f_q(z)=f(z)$, then we say that $f_q$ is  a $q$-analog of $f$. Usually there are many $q$-analogs of a given function, each one capturing different aspects of the function's properties. Indeed there has been many  $q$-analogs of the zeta function; see for example~
\cite{ Masato, Pup, Kenichi, Kvit95-a,Kvit95-b}, and also \cite{Srivastava-Choi} and the references therein.  In this paper, we propose and study another $q$-analog of the zeta function which has special values given in terms of $q$-Bernoulli  and $q$-Euler numbers introduced in \cite{Ism-Man-2019} such that identities similar to \eqref{Id0} hold. Furthermore our $q$-analog has associations with the so-called spectral zeta functions. Indeed Riemann's zeta function can be viewed (up to simple constants) as the spectral zeta function associated with the zeros of certain Bessel functions with half integer parameters (see \cite{Kishore} for instance). It is thus natural to explore  a $q$-generalization involving zeros of $q$-Bessel functions; this was indeed studied by Kvitsinsky (see \cite{Kvit95-a, Kvit95-b} for instance). Our approach is based in part on Kvitsinsky's studies, but we modify the definition of the resulting zeta function in such a way as to produce elegant formulas relating our proposed $q$-analog to $q$-Bernoulli numbers. The details are presented in section 4.
Unless otherwise  stated, $q$ shall denote positive number less than one.  We follow \cite{GR} for the basic definitions for $q$-functions, like the $q$-shifted factorial, the $q$-Gamma function, and  Jackson's $q$-analogs of the Bessel functions. Recall the two $q$-analogs of the exponential functions are defined by 
\[E_q(z)=(-(1-q)z;q)_{\infty} \;(z\in\mathbb{C})\;\mbox{and}\;e_q(z)=\frac{1}{(z(1-q);q)_{\infty}}\; (|z(1-q)|<1).\]
The $q$-trigonometric   functions $\text{Sin}_q z$ and  $\text{Cos}_q z$  are are defined in terms of $E_q(\pm iz)$ as in the classical case, see \eqref{S-C-def} below,  and we shall denote their positive zeros by $(\xi_k)_{k=1}^{\infty}$ and $(\eta_k)_{k=1}^{\infty}$, respectively.  
We introduce two  $q$-analogs of the zeta function which we define by 
\be\label{zetadefinitions} \begin{split}
\zeta_q(s)=\sum_{k=1}^{\infty}\frac{\text{Cos}_q\xi_k}{\text{Sin}'_q \xi_k} \frac{1}{\xi_k^s},\quad 
\zeta_q^*(s)=-\sum_{k=1}^{\infty}\frac{\text{Sin}_q\eta_k}{\text{Cos}'_q \eta_k} \frac{1}{\eta_k^s},
\end{split}
\ee
where $\re s>0$. Similarly, we introduce two $q$-analogs of the Dirichlet eta function  by 

\be \label{etadefinitions1} \begin{split}
\eta_q(s)=-\sum_{k=1}^{\infty}\frac{1}{\text{Sin}'_q \xi_k} \frac{1}{\xi_k^s},\quad 
\eta_q^*(s)=\sum_{k=1}^{\infty}\frac{1}{\text{Cos}'_q \eta_k} \frac{1}{\eta_k^s},
\end{split}
\ee
where $s\in \mathbb{C}$. 

We shall show that
\[\lim_{q\to 1^-}\zeta_q(s)=\frac{1}{\pi^s}\zeta(s),\quad  \lim_{q\to 1^-}\zeta_q^{*}(s)=\frac{2^s-1}{\pi^s}\zeta(s),\]
\[\lim_{q\to 1^-}\eta_q(s)=\frac{1}{\pi^s}\eta(s),\quad  \lim_{q\to 1^-}\eta_q^{*}(s)=\frac{2^s-1}{\pi^s}\eta(s).\]
{Thus as $q\to1^-$, the above expressions tend to simple multiples of $\zeta(s)$ and $\eta(s)$, respectively, which makes them valid $q$-analogs. However, quotients such as $\frac{\Cos_q(\xi_k)}{\Sin'_q(\xi_k)}$ have a more complicated behavior than their classical counterpart (which is simply equal to 1!). We devote a good part of section 3 to study the growth behavior of functions appearing in those quotients, culminating in the deduction of the convergence regions for the series stated above, cf. Lemma \ref{lem:bounded} and Corollary \ref{Cor:B}.}

We shall also show that the values of the functions we introduced  at even integers are related to the $q$-Bernoulli and Euler numbers introduced in~\cite{Ism-Man-2019}. 
Since the zeros of  $\text{Sin}_q z$ and $\text{Cos}_qz$ are real and separate each other,  see \cite{Ismail-82},  one can verify that
$\text{Cos}_q\xi_k$ and $\text{Sin}_q'\xi_k$ have the same sign while $\Sin_q\eta_k$ and $\Cos_q'\eta_k $ have opposite signs, which is the reason we have a negative sign in the definitions of  $\zeta_q^* (s)$ and $\eta_q(s)$.

This paper is organized as follows.  Section 2  contains some preliminaries on $q$-analogs of the exponential, trigonometric, and Bessel functions. In section 3, some asymptotic properties of those functions are studied with an eye towards convergence questions and other applications in later sections. In Section 4, we recall the definition of spectral zeta functions and study those related to the zeros of the second $q$-Bessel functions.  We obtain a $q$-analog of the Rayleigh functions and derive two new $q$-analogs of the zeta function.  In Section 5 we introduce a $q$-analog of Hurwitz's zeta function and Dirichlet's eta function by using contour integration for a function of the form $z^{s-2}f(z)$ where $f$ is either the generating function of the $q$-Bernoulli polynomials or $q$-Euler polynomials. In Section  6,  we define another $q$-analog of Dirichlet's eta function and prove some identities relating these $q$-analogs of the Riemann zeta and Dirichlet eta functions. Lastly, in section 7 we show that the contour integration approach can also be used to work with the $q$-analog of the zeta function.

\section{Preliminaries}
We follow \cite{Ism-Man-2019} for the definitions
 of  the exponential functions $E_q(z)$, $e_q(z)$ and \cite{GR} for the definition and properties of the second Jackson's $q$-Bessel functions $J_\nu^{(2)}(x;q)$. 
Jackson~\cite{Jack} defined  $q$-analogs of the trigonometric functions by  
\be\label{S-C-def}
\begin{split}
\text{Sin}_q z&=\dfrac{E_q(iz)-E_q(-iz)}{2i}=\sum_{k=0}^{\infty} (-1)^k\frac{q^{k(2k+1)}}{\Gamma_q(2k+2)} z^{2k+1},\\
\text{Cos}_qz&=\dfrac{E_q(iz)+E_q(-iz)}{2}=\sum_{k=0}^{\infty} (-1)^k\frac{q^{k(2k-1)}}{\Gamma_q(2k+1)} z^{2k}.
\end{split}
\ee
These $q$-trigonometric functions are connected to certain $q$-Bessel functions of half-integer orders by the identities 
\be\begin{split}\label{ID00}
	J_{1/2}^{(2)}(x;q^2)&= \frac{1}{\Gamma_{q^2}(1/2)} (\frac{2}{x(1-q^2)})^{1/2} \text{Sin}_q(\frac{x}{2(1-q)}),\\
  J_{3/2}^{(2)}(x;q^2)&=\frac{q^{-1}}{\Gamma_{q^2}(1/2)} (\frac{2}{x(1-q^2)})^{1/2}\left[\frac{2(1-q)}{x}\text{Sin}_q(\frac{x}{2(1-q)})-\text{Cos}_q( \frac{x}{2(1-q)})\right].
  \end{split}\ee
 Ismail~\cite{Ismail}  proved  that the  zeros of $ J_{\nu}^{(2)}(\cdot;q)$ are real  and simple, and the zeros of
  $x^{-\nu}J_{\nu}^{(2)}(x;q)$ and $x^{-\nu-1}J_{\nu+1}^{(2)}(x;q)$ are interlacing and simple.  Therefore,  
 the  zeros of $\text{Sin}_q x$ and $\text{Cos}_qx $ are infinite, real, simple,  and interlacing.  Their only cluster point is infinity.  Kvitsinsky ~\cite{Kvit95-b} proved that the positive  zeros $j_{k,\nu}(q^2)$ of $J_{\nu}^{(2)}(\cdot;q^2)$ have the asymptotic
 \be
 j_{k,\nu}(q^2)\sim 2 q^{-\nu-1-2k}\quad \text{as}\quad  k\to \infty.
 \ee
 Therefore, the positive zeros $\xi_k$ of $\Sin_qz$ have the asymptotics   
 \be\label{Ass}\xi_k \sim A q^{-2k},\;A:=\frac{q^{-3/2}}{1-q} \quad\text{as}\quad  k\to \infty,\ee
 and the positive zeros $\eta_k$ of $\Cos_q z$ have the asymptotics
  \be\label{Ass-2}\eta_k \sim B q^{-2k},\;B:=\frac{q^{-1/2}}{1-q} \quad\text{as}\quad  k\to \infty,\ee
  Similar studies for the asymptotics of the zeros of the third Jackson $q$-Bessel functions can be found in 
  \cite{Annaby-Mansour-009}. 
In \cite{NP,Ism-Man-2019}, the authors defined  a pair of $q$-analogs of  the Bernoulli polynomials through the generating functions 
\be\label{GF}\begin{split}
\frac{te_q(tx)}{e_q(t/2)E_q(t/2)-1}&:=\sum_{n=0}^{\infty} b_n(x;q)\frac{t^n}{[n]!},\\
\dfrac{t\,E_q(xt)}{E_q(t/2)e_q(t/2)-1}&:=\sum_{n=0}^{\infty}B_n(x;q)\frac{t^n}{[n]!}.
\end{split}
\ee
 The $q$-Bernoulli numbers are defined by
\be\label{GF0}\beta_n(q):=B_n(0;q)=b_n(0;q).\ee
 
They also  introduce the following pair of $q$-analogs of Euler's polynomials
\[\begin{split}
\frac{2e_q(xt)}{e_q(t/2)E_q(t/2)+1}&=\sum_{n=0}^{\infty}e_n(x;q)\frac{t^n}{[n]!},\\
 \frac{2E_q(xt)}{e_q(t/2)E_q(t/2)+1}&=\sum_{n=0}^{\infty}E_n(x;q)\frac{t^n}{[n]!}.
\end{split}\]
 The $q$-Euler numbers $ \widetilde{E}_n$ of the first kind  are defined by
$\widetilde{E}_n=E_n(0;q)=e_n(0;q).$
A pair of  $q$-analogs of the  Euler's numbers are  defined by
\[E_n:=2^n E_n(1/2;q),\quad e_n:=2^n e_n(1/2;q).\] 
A $q$-analog of Genocchi numbers is defined through the generating function
\[\frac{2t}{e_q(t/2)E_q(t/2)+1}=\sum_{n=0}^{\infty}G_n(q) \frac{t^n}{[n]!}.\]
Simple manipulation shows that
$G_n(q)=[n]\widetilde{E}_{n-1}(q),$
\[\beta_0=1,\; \beta_1=-1/2,\;  \beta_{2k+1}=0,\;k=1,2,\ldots,\] and 
\[\widetilde{E}_0=1,\; \widetilde{E}_1=-1/2,\; \widetilde{E}_{2k}=0,\; k=1,2,\ldots.\]

It is worth noting that in \cite{Cies}, Cie\'{s}li\'{n}ski defines another $q$-analog of the exponential function by 
\be\label{ne} \E(z):=e_q(z/2)E_q(z/2),\quad z\in\mathbb{C},\ee
which already appears in the generating functions of Euler and Bernoulli polynomials and numbers.

\section{Asymptotic Properties }
 This section includes asymptotic properties that we shall utilize when studying the convergence of contour integrals appearing in Sections~\ref{Sec:Dir-Bernoulli} and   \ref{sec:Der-Euler}.  {Most of our results and estimates have well-known classical counter-parts. However, the proofs in the $q$-setting are usually more involved since the $q$-exponential function $E_q$ has infinitely many real zeros.}
\begin{lem}\label{lemma1}
For $x>0$, the function 
$
\frac{E_q(x)}{\Sinhq(x)}
$
is  bounded.
\end{lem}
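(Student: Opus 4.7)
The plan is to convert the statement into a bound on the ratio $R(x):=E_q(-x)/E_q(x)$: from $\Sinhq(x)=(E_q(x)-E_q(-x))/2$ we have $E_q(x)/\Sinhq(x) = 2/(1-R(x))$, so it suffices to show that $R(x)$ stays strictly bounded away from $1$ for $x$ in the range of interest. (The behavior near $x=0$ is of course singular, since $\Sinhq(x)\sim x$; the substantive content of the lemma concerns $x$ bounded below, in particular as $x\to\infty$, which is what will be needed for the contour estimates in later sections.)

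The key step is to use the product formula $E_q(y)=\prod_{k=0}^{\infty}(1+(1-q)yq^k)$ to write
\[
R(x) = \prod_{k=0}^{\infty}\frac{1-(1-q)xq^k}{1+(1-q)xq^k},
\]
in which every factor has absolute value strictly less than $1$ for $x>0$. To obtain a \emph{uniform} bound I would exploit a $q$-self-similarity by parametrizing $x = q^{-N}t/(1-q)$ with $N\in\mathbb{Z}_{\geq 0}$ and $t\in[1,q^{-1})$. Splitting the product at $k=N$ and re-indexing the two tails by $m=N-k$ and $m=k-N$ gives
\[
|R(x)| = \frac{t-1}{t+1}\prod_{m=1}^{\infty}\frac{t-q^m}{t+q^m}\prod_{m=1}^{\infty}\frac{1-tq^m}{1+tq^m},
\]
an expression depending only on $t$ and not on $N$.

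I would then observe that the right-hand side is continuous on $[1,q^{-1}]$, the convergence of both products being uniform thanks to the geometric decay of $q^m$, and that it vanishes at both endpoints: at $t=1$ by the explicit prefactor, and at $t=q^{-1}$ because the $m=1$ term of the second product becomes $(1-1)/(1+1)=0$. Hence the supremum $c$ of $|R(x)|$ over $t\in[1,q^{-1}]$ satisfies $c<1$ and, being independent of $N$, yields $|R(x)|\le c$ uniformly for all $x\ge 1/(1-q)$, so that $E_q(x)/\Sinhq(x)\le 2/(1-c)$ on that range. On any compact interval $[\varepsilon,1/(1-q)]$ with $\varepsilon>0$ the quotient is automatically bounded by continuity together with the positivity of $\Sinhq$ on $(0,\infty)$. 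The main obstacle I anticipate is the bookkeeping of the re-indexing step: one needs to verify that the shift correctly exhibits $|R(x)|$ as a single function of $t$ and that the endpoint cancellations at $t=1,q^{-1}$ are identified cleanly. After that, the rest is standard analysis on a compact interval.
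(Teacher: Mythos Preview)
Your strategy is sound, but the re-indexing step contains exactly the bookkeeping error you anticipated. With $x=q^{-N}t/(1-q)$ you have $(1-q)xq^{k}=tq^{k-N}$, so the factors with $k<N$ re-index via $m=N-k$ to $m=1,\dots,N$, \emph{not} $m=1,\dots,\infty$. Concretely,
\[
|R(x)|=\frac{t-1}{t+1}\;\prod_{m=1}^{N}\frac{t-q^{m}}{t+q^{m}}\;\prod_{m=1}^{\infty}\frac{1-tq^{m}}{1+tq^{m}},
\]
and this genuinely depends on $N$, so your assertion that $|R(x)|$ is a single function of $t$ is false as written. The repair is immediate: for $t\in[1,q^{-1})$ each factor $(t-q^{m})/(t+q^{m})$ lies in $(0,1)$, so the finite product is $\le 1$, whence
\[
|R(x)|\le g(t):=\frac{t-1}{t+1}\prod_{m=1}^{\infty}\frac{1-tq^{m}}{1+tq^{m}}.
\]
Now your compactness argument applies verbatim to $g$ on $[1,q^{-1}]$: it is continuous (uniform convergence of the product), vanishes at both endpoints, and is strictly less than $1$ in between, so $\sup g=c<1$. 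From there the conclusion follows as you indicated.

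The paper takes a different route. Instead of the product form, it writes the denominator $E_q(x)-E_q(-x)$ as the $q$-series $\sum_{j\ge 0}\frac{2r^{2j+1}q^{\binom{2j+1}{2}}}{(q;q)_{2j+1}}$ with $r=(1-q)x=q^{-n+\delta}$, $0\le\delta<1$, and bounds it below by the single term $j=\lfloor n/2\rfloor$; combined with a standard closed form for $(-q^{-n+\delta};q)_\infty$ this yields the explicit bound $E_q(x)/(2\Sinhq x)\le q^{-1}(-1;q)_\infty^{2}$. Your corrected argument is more conceptual and arguably cleaner, but is non-constructive in the constant; the paper's single-term estimate is more hands-on and produces an explicit bound, which can be convenient if a quantitative version is ever needed.
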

\begin{proof}
Write $r=x(1-q)$, then we need to show that
\[
\dfrac{E_q(r)}{2\text{Sinh}_qr }=\frac{(-r,q)_\infty}{(-r;q)_\infty-(r;q)_\infty}
\]
is bounded as $r \to \infty$. Write $r=q^{-n+\delta}$ with $n$ is a positive integer and $0\leq \delta<1$. Then we have
\[
(-q^{-n+\delta};q)_\infty=q^{-\binom{n+1}{2}+n\delta}(-q^{1-\delta};q)_n (-q^\delta;q)_\infty.
\]
Also 
\[
(-r;q)_\infty-(r;q)_\infty=\sum_{j=0}^\infty \frac{q^{(2j+1)(j-n+\delta)}}{(q;q)_{2j+1}}.
\]
If we choose $j=\lfloor \frac{n}{2}\rfloor$,  we get that the above expression is greater than 
$\frac{q^{-\binom{n+1}{2}}}{(q;q)_{n+1}}q^{(n+1)\delta}$ if $n$ is even, and greater than $\frac{q^{-\binom{n+1}{2}}}{(q;q)_{n}}q^{n\delta}$ if $n$ is odd. Noting that $q^\delta<1$ and $(q;q)_n>(q;q)_{n+1}$ we get
\[
\frac{(-r,q)}{(-r;q)-(r;q)}\leq q^{-\delta}(q;q)_n(-q^{1-\delta};q)_n (-q^\delta;q)_\infty\leq q^{-\delta}(-q^{1-\delta};q)_\infty(-q^\delta;q)_\infty,
\]
which is bounded above by $q^{-1}(-1;q)_\infty^2$ for all $n, \delta$ and hence for all $r$.

\end{proof}

\begin{lem}\label{lem:1}
Let $\{\xi_k\}_{k=1}^{\infty}$ (resp. $\{\eta_k\}_{k=1}^{\infty}$) be the positive zeros of $\text{Sin}_qz$ (resp. $\Cos_q z$). 
Let $\alpha$ and $\beta$ be real positive numbers such that $\alpha+\beta<2$. Then there exists $n_0\in\mathbb{N}$ such that for all $n\geq n_0$
\bea \label{Eq:ineq}q^{-\alpha}\xi_n< q^{\beta}\xi_{n+1},\\
 q^{-\alpha}\eta_n< q^{\beta}\eta_{n+1}.\eea
\end{lem}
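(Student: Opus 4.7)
The plan is to read off the conclusion directly from the asymptotics \eqref{Ass} and \eqref{Ass-2}. Both sought inequalities are statements about the ratios $\xi_{n+1}/\xi_n$ and $\eta_{n+1}/\eta_n$, so I would begin by rewriting \eqref{Eq:ineq} in the equivalent form
\[
\frac{\xi_{n+1}}{\xi_n} > q^{-(\alpha+\beta)},\qquad \frac{\eta_{n+1}}{\eta_n} > q^{-(\alpha+\beta)}.
\]
Thus everything reduces to understanding the limit of these ratios.

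Next, I would invoke \eqref{Ass}, which asserts $\xi_k \sim A q^{-2k}$, and correspondingly \eqref{Ass-2} for $\eta_k$. Dividing successive terms yields
\[
\lim_{n\to\infty}\frac{\xi_{n+1}}{\xi_n}=q^{-2},\qquad \lim_{n\to\infty}\frac{\eta_{n+1}}{\eta_n}=q^{-2}.
\]
Since $0<q<1$, the function $x\mapsto q^{-x}$ is strictly increasing, so the hypothesis $\alpha+\beta<2$ gives $q^{-(\alpha+\beta)}<q^{-2}$. Set $\delta:=q^{-2}-q^{-(\alpha+\beta)}>0$.

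Finally, by the definition of the limits above, there exist $n_1,n_2\in\mathbb{N}$ such that
\[
\left|\frac{\xi_{n+1}}{\xi_n}-q^{-2}\right|<\frac{\delta}{2}\quad(n\geq n_1),\qquad \left|\frac{\eta_{n+1}}{\eta_n}-q^{-2}\right|<\frac{\delta}{2}\quad(n\geq n_2).
\]
Taking $n_0=\max(n_1,n_2)$, both ratios exceed $q^{-2}-\delta/2>q^{-(\alpha+\beta)}$ for $n\geq n_0$, which is exactly \eqref{Eq:ineq}.

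There is no real obstacle here: the proof is essentially a direct consequence of the Kvitsinsky asymptotics already cited in \eqref{Ass} and \eqref{Ass-2}, together with the elementary observation that the strict inequality $\alpha+\beta<2$ produces a positive gap $\delta$ that absorbs the error term coming from the asymptotic. The only thing to be mildly careful about is that the asymptotic $\xi_k\sim Aq^{-2k}$ must be used in its ratio form rather than termwise, but this follows immediately from the definition of $\sim$.
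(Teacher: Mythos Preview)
Your proof is correct and follows essentially the same approach as the paper: both rewrite the inequality as a condition on the ratio $\xi_{n+1}/\xi_n$, invoke the asymptotic \eqref{Ass} to obtain $\lim_{n\to\infty}\xi_{n+1}/\xi_n=q^{-2}$, and then use the definition of limit with a suitably chosen tolerance (the paper takes $\epsilon=1-q^{2-(\alpha+\beta)}$ where you take $\delta/2$ with $\delta=q^{-2}-q^{-(\alpha+\beta)}$). The arguments are interchangeable.
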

\begin{proof}
The proof follows from \eqref{Ass}  since  $\lim \frac{\xi_{n+1}}{\xi_n}=q^{-2}$. Hence if we choose 
$\epsilon=1-q^{2-(\alpha+\beta)}$,  then  there exists $n_0\in\mathbb{N}$ such that 
\be \label{epsion}(1-\epsilon)<\frac{\xi_{n+1}}{\xi_n}q^{2}<(1+\epsilon),\;\mbox{for all}\; n\geq n_0,\ee
and \eqref{Eq:ineq} follows. The proof for $\eta_k$ is similar and omitted for brevity.
\end{proof}


\begin{lem}\label{lem:2}
Let $\alpha$, $\beta$, and $n_0$ be as in \text{Lemma }\ref{lem:1}.  Let $A$ be the constant in \eqref{Ass} and $\gamma$ is any positive constant less than $\alpha$ and $\beta$. Then for  $q^{-\alpha}\xi_n<R_n<q^{\beta}\xi_{n+1}$,
\bea  
\frac{1}{|\text{Sinh}_q z|}=O\left(\left(\frac{Aq^{\beta}}{(1+q^{\gamma})}\right)^n\,R_n^{-n}\right)
, \;|z|=R_n, \;n\to\infty,\\
\frac{1}{|\text{Cosh}_q z|}=O\left(\left(\frac{Bq^{\beta}}{(1+q^{\gamma})}\right)^n\,R_n^{-n+1}\right)
, \;|z|=R_n, \;n\to\infty.
\eea

\end{lem}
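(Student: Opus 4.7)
The plan is to exploit Hadamard factorization. By the geometric growth in \eqref{Ass} and \eqref{Ass-2}, both $\Sinhq$ and $\text{Cosh}_q$ are entire functions of order zero, so matching their zero sets $\{0\}\cup\{\pm i\xi_k\}$ and $\{\pm i\eta_k\}$ with the leading coefficients read off from \eqref{S-C-def} yields
\[
\Sinhq(z)=z\prod_{k\ge1}\!\bigl(1+z^{2}/\xi_{k}^{2}\bigr),\qquad
\text{Cosh}_q(z)=\prod_{k\ge1}\!\bigl(1+z^{2}/\eta_{k}^{2}\bigr).
\]
Since the minimum of $|1+z^{2}/\xi_k^{2}|$ on $|z|=R_n$ equals $|R_n^{2}/\xi_k^{2}-1|$ (attained at $z=\pm iR_n$),
\[
|\Sinhq(z)|\ge R_n\prod_{k=1}^{n}\!\bigl(R_n^{2}/\xi_k^{2}-1\bigr)\prod_{k>n}\!\bigl(1-R_n^{2}/\xi_k^{2}\bigr),
\]
and the problem reduces to estimating these two products.

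I would first handle the tail. Iterating Lemma \ref{lem:1} gives $\xi_{n+j}\ge q^{-(\alpha+\beta)(j-1)}\xi_{n+1}$ for $n\ge n_0$ and $j\ge1$, so $R_n/\xi_{n+j}\le q^{\beta+(\alpha+\beta)(j-1)}$ by the hypothesis $R_n<q^{\beta}\xi_{n+1}$. Since $\alpha+\beta<2$, the tail product is bounded below by $\prod_{j\ge1}\bigl(1-q^{2\beta+2(\alpha+\beta)(j-1)}\bigr)>0$, uniformly in $n$. For the head $\prod_{k\le n}$ I would write each factor as $(R_n/\xi_k)^{2}\bigl(1-(\xi_k/R_n)^{2}\bigr)$; the inequality $\xi_k/R_n\le\xi_n/R_n<q^{\alpha}$ lets me extract a uniform $(1-q^{2\alpha})^{n}$ from the second factors, leaving $R_n^{2n}/\prod_{k=1}^{n}\xi_k^{2}$ as the main exponential contribution.

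The next step is to estimate $\prod_{k=1}^{n}\xi_k$. Sharpening \eqref{Ass} to $\xi_k=Aq^{-2k}(1+\delta_k)$ with $\sum|\delta_k|<\infty$ (available from the underlying Bessel-function analysis in \cite{Kvit95-b}) yields $\prod_{k=1}^{n}\xi_k\asymp A^{n}q^{-n(n+1)}$ with bounded multiplicative error. Assembling the three estimates and separating a factor of $R_n^{n}$ produces the lower bound $|\Sinhq(z)|\ge c\bigl((1+q^{\gamma})/(Aq^{\beta})\bigr)^{n}R_n^{n}$, whose inverse is the claim. The specific factor $(1+q^{\gamma})^{n}$ is traceable to the proof of Lemma \ref{lem:1}: the choice $\varepsilon=1-q^{2-(\alpha+\beta)}$ in \eqref{epsion} forces $\xi_{n+1}/\xi_n<q^{-2}(1+q^{\gamma})$ eventually for any $\gamma<\min(\alpha,\beta)$, and this refinement propagates through the $n$-fold product. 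The $\text{Cosh}_q$ case is identical except that its Hadamard product has no leading $z$, so the prefactor loses one power of $R_n$ (producing $R_n^{-n+1}$) and $B$ replaces $A$.

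The main obstacle is the precise bookkeeping of constants: one must carry the tail product, the $(1-q^{2\alpha})^{n}$ head correction, and the asymptotic remainders $\prod(1+\delta_k)$ through the computation simultaneously, and check that the only surviving exponential base after all cancellations is exactly $Aq^{\beta}/(1+q^{\gamma})$, while pairing $R_n^{n}$ correctly against the refined estimate of $\prod\xi_k$.
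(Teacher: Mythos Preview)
Your overall architecture---Hadamard factorization, head/tail split, minimum of $|1+z^2/\xi_k^2|$ on $|z|=R_n$---is exactly the paper's. The gap is in your treatment of the ``second factors'' $\prod_{k\le n}\bigl(1-(\xi_k/R_n)^2\bigr)$. Bounding each factor by $1-q^{2\alpha}$ gives only $(1-q^{2\alpha})^n$, an exponentially decaying quantity, and this loss cannot be recovered elsewhere: carrying it through, your final base would be at best $Aq^{\alpha}/\bigl((1-q^{2\alpha})(1-\epsilon')\bigr)$ rather than the stated $Aq^{\beta}/(1+q^{\gamma})$. The paper avoids this by using the geometric decay $\xi_k/\xi_n\lesssim q^{2(n-k)}$ (from the two-sided $\epsilon$-form of \eqref{Ass}) to bound the product from below by the \emph{fixed} $q$-Pochhammer $(q^{2\alpha-2\gamma};q^4)_\infty$; see \eqref{Eq:ineq1}. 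The same refinement works in your decomposition, since $\xi_k/R_n<q^{\alpha}\xi_k/\xi_n$, but you have to actually use it.

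Two secondary points. First, your account of the origin of $(1+q^{\gamma})^n$ is off: it does not come from the $\epsilon$ in the proof of Lemma~\ref{lem:1}. The paper makes a \emph{fresh} choice $\epsilon=(1-q^{\gamma})/(1+q^{\gamma})$ in \eqref{Ass}, so that $1+\epsilon=2/(1+q^{\gamma})$; combined with $R_n<q^{\beta}\xi_{n+1}<q^{\beta}(1+\epsilon)Aq^{-2(n+1)}$ this gives $q^{-2n}>\dfrac{(1+q^{\gamma})q^{2-\beta}}{2A}R_n$, and that is where both $q^{\beta}$ and $1+q^{\gamma}$ enter the final base. Second, the summability hypothesis $\sum|\delta_k|<\infty$ is not needed (and is not asserted in the paper): once the ``second factors'' are handled by a $q$-Pochhammer constant, the two-sided bound $(1-\epsilon)Aq^{-2k}<\xi_k<(1+\epsilon)Aq^{-2k}$ suffices to control $\prod_{k\le n}\xi_n^2/\xi_k^2$ directly, as in \eqref{Eq:Ineq3}, without ever estimating $\prod_{k\le n}\xi_k$ on its own.
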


\begin{proof}From \eqref{Ass}, if 

 $0<\gamma<\min\{\alpha,\beta\}$,  and $\epsilon=\frac{1-q^{\gamma}}{1+q^{\gamma}}$, then there exists $m_0\in\mathbb{N}$ such that
 \be \label{In}(1-\epsilon)A q^{-2n}<\xi_n<(1+\epsilon)Aq^{-2n},\;\mbox{for all}\; n\geq m_0.\ee
 Take $k_0=\max\{n_0,m_0\}$. Then 
\be\label{Id:1}
\begin{split}
|\text{Sinh}_qz|&=|z|\left|\prod_{k=1}^{\infty}1+\frac{z^2}{\xi_k^2}\right|\\
&=|z|\prod_{k=1}^{n}|1+\frac{z^2}{\xi_k^2}|\prod_{k=n+1}^{\infty}{|1+\frac{z^2}{\xi_k^2}|}.
\end{split}
\ee
Now if $|z|=R_n$, then from $q^{-\alpha}\xi_n<R_n<q^{\beta}\xi_{n+1}$ we get
\[\begin{split}\left|\prod_{k=1}^{n}1+\frac{z^2}{\xi_k^2}\right|&\geq \prod_{k=1}^{n}\frac{q^{-2\alpha}{\xi_n}^2}{\xi_k^2}-1
= \prod_{k=1}^{n}\frac{q^{-2\alpha}{ \xi_n}^2}{\xi_k^2}\prod_{k=1}^{n}1-\frac{q^{2\alpha }{\xi_k}^2}{\xi_k^n}\\
&\geq  \prod_{k=1}^{n}\frac{q^{-2\alpha}{ \xi_n}^2}{\xi_k^2} \prod_{k=1}^{k_0-1}1-\frac{q^{2\alpha}{\xi_k}^2}{\xi_n^2} \prod_{k=k_0}^{n}\left(1-\frac{q^{2\alpha}{ \xi_k}^2}{\xi_n^2}\right).
\end{split}
 \]
Since 
\be \label{Eq:ineq1} \begin{split}\prod_{k=k_0}^{n}\left(1-\frac{q^{2\alpha}{ \xi_k}^2}{\xi_k^n}\right)&\geq
\prod_{k=k_0}^{n}\left(1-q^{2\alpha}\frac{(1+\epsilon)^2}{(1-\epsilon)^2}q^{4n-4k}\right)\\
&=\prod_{k=k_0}^{n}\left(1-q^{2\alpha-2\gamma}q^{4n-4k}\right)
\geq (q^{2\alpha-2\gamma};q^4)_{\infty},
\end{split}\ee
\be \label{Eq:Ineq2}\prod_{k=1}^{k_0-1}1-\frac{q^{2\alpha}{\xi_k}^2}{\xi_n^2} \geq (1-q^{2\alpha})^{k_0-1},\ee
 
\be\label{Eq:Ineq3} \prod_{k=1}^{n}q^{-2\alpha}\frac{\xi_n^2}{\xi_k^2}\geq \prod_{k=k_0}^{n}q^{-2\alpha}\frac{\xi_n^2}{\xi_k^2}\geq q^{-2(\alpha-\gamma)(n-k_0+1)}q^{-2(n-k_0)(n-k_0+1)}.
\ee
and  
\be\label{Eq:Ineq4} \prod_{k=n+1}^{\infty}{|1+\frac{z^2}{\xi_k^2}|}\geq (q^{4+2\beta-2\gamma};q^4)_{\infty}.\ee
Combining \eqref{Id:1}, \eqref{Eq:ineq1} --\eqref{Eq:Ineq4}, we obtain
\[\begin{gathered}|\text{Sinh}_qz|\geq  R_n (1-q^{2\alpha})^{k_0-1}q^{-2(\alpha-\gamma)(n-k_0+1)}q^{-2(n-k_0)(n-k_0+1)}\\ \times (q^{2\beta-2\gamma+4};q^2)_{\infty}(q^{2\alpha-2\gamma};q^4)_{\infty},\end{gathered}\]
where $|z|=R_n$.
Using $q^{-\alpha}\xi_n<R_n<q^{\beta}\xi_{n+1}$ and \eqref{In}, we can prove that 

\be \frac{q^{\alpha}}{A(1-\epsilon)}R_n>q^{-2n}>\frac{q^{2-\beta}}{A(1+\epsilon)}R_n,\;n\geq k_0.  \ee

Consequently, for $|z|=R_n$,
\[\frac{1}{\left|\text{Sinh}_q z\right|}\leq C(\alpha,\beta,\gamma,A) \left(\frac{Aq^{\beta}}{1+q^{\gamma}}\right)^n  R_n^{k_0-n},\;n\geq k_0\]
where 
\[C(\alpha,\beta,\gamma,A)=(1-q^{2\alpha})^{-k_0+1} q^{2(\alpha-\gamma-k_0)(-k_0+1)} 
\left(\frac{2Aq^{\beta-2}}{1+q^{\gamma}}\right)^{\alpha-\gamma+1} \left(\frac{q^{\alpha-\gamma}(1+q^{\gamma})}{2A}\right)^{k_0}.\]
The proof for $\text{Cosh}_q$ is similar and omitted for brevity.
\end{proof}



From now on,  we use  the notations  
\[m(f,R):=\min\{|f(z)|,\; |z|=R\}\quad \text{and}\quad M(f,R):=\max\{|f(z)|,\;|z|=R\}. \]

\begin{cor}
Let $R_n$ be as in Lemma \textup{\ref{lem:2}}, then for any $s\in\mathbb{C}$,
\[\lim_{n\to\infty}M\left(\frac{z^s}{\text{Sinh}_qz},R_n\right)=0,\quad \lim_{n\to\infty}M\left(\frac{z^s}{\text{Cosh}_qz},R_n\right)=0 .\]
\end{cor}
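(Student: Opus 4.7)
The plan is to combine a uniform bound on $|z^s|$ over the circle $|z|=R_n$ with the reciprocal estimate furnished by Lemma \ref{lem:2}, and then exploit the rapid growth $R_n=O(q^{-2n})$ to obtain super-exponential decay.

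First, I would bound $|z^s|$ on $|z|=R_n$. Writing $z=R_n e^{i\theta}$ with $\theta\in(-\pi,\pi]$ and taking the principal branch, one has $|z^s|=R_n^{\re s}\,e^{-\theta\,\mathrm{Im}\,s}\le R_n^{\re s}\,e^{\pi|\mathrm{Im}\,s|}$, so this factor contributes at worst $R_n^{\re s}$ up to a constant depending only on $s$. Multiplying by the first estimate in Lemma \ref{lem:2} yields
\begin{equation*}
M\!\left(\frac{z^s}{\Sinhq z},R_n\right)=O\!\left(\left(\frac{Aq^\beta}{1+q^\gamma}\right)^n R_n^{\re s-n}\right)\qquad (n\to\infty).
\end{equation*}

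The key step is now to show that this upper bound tends to zero. Since $R_n<q^\beta\xi_{n+1}$ and, by \eqref{Ass}, $\xi_{n+1}\le K q^{-2n}$ for some $K>0$ and all sufficiently large $n$, one has $R_n\le K'q^{-2n}$ with $K'=Kq^\beta$. Consequently
\begin{equation*}
R_n^{\re s-n}\le (K')^{\re s-n}\,q^{2n^2-2n\,\re s}.
\end{equation*}
The super-exponential factor $q^{2n^2}$ (recall $0<q<1$) crushes every remaining exponential in $n$, including the geometric prefactor $(Aq^\beta/(1+q^\gamma))^n$, the term $q^{-2n\,\re s}$, and the implicit constants coming from Lemma \ref{lem:2}. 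Hence $M(z^s/\Sinhq z,R_n)\to 0$. The Cosh assertion follows by the same argument, using the second conclusion of Lemma \ref{lem:2} in place of the first and the asymptotics \eqref{Ass-2} for $\eta_n$ in place of \eqref{Ass} (the only arithmetic change is that $A$ is replaced by $B$ and the exponent $-n$ becomes $-n+1$, which is harmless against $q^{2n^2}$).

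I do not anticipate a serious obstacle: the argument is essentially dimensional. Because $R_n$ grows like $q^{-2n}$, any factor of the form $R_n^{-n}$ produces $q^{2n^2}$ decay, which automatically dominates anything at most geometric in $n$; the bookkeeping, while explicit, is routine once the asymptotic of $R_n$ is inserted.
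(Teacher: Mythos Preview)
Your approach is the natural one and is essentially what the paper intends (the corollary is stated without proof, as an immediate consequence of Lemma~\ref{lem:2}). However, there is a genuine slip in your key inequality. For $n>\re s$ the exponent $\re s-n$ is \emph{negative}, so an \emph{upper} bound $R_n\le K'q^{-2n}$ yields only
\[
R_n^{\re s-n}\ \ge\ (K'q^{-2n})^{\re s-n}=(K')^{\re s-n}q^{2n^2-2n\,\re s},
\]
which is the reverse of what you wrote. To obtain the desired upper bound on $R_n^{\re s-n}$ you must instead use the \emph{lower} bound $R_n>q^{-\alpha}\xi_n$ together with the lower half of the asymptotic \eqref{Ass} (or the inequality \eqref{In} appearing in the proof of Lemma~\ref{lem:2}), giving $R_n\ge c\,q^{-2n}$ for some $c>0$ and all large $n$. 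Then indeed $R_n^{\re s-n}\le c^{\re s-n}q^{2n^2-2n\,\re s}$, and your concluding argument goes through unchanged: the Gaussian factor $q^{2n^2}$ overwhelms all the remaining at-most-exponential terms, including $c^{-n}$, $(Aq^{\beta}/(1+q^{\gamma}))^n$, and $q^{-2n\,\re s}$. The Cosh case is identical with $B$ in place of $A$ and the harmless shift in the exponent.
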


\begin{lem}\label{lem:3}
Let $\mu$ and $\nu$ be complex numbers such that $\nu\neq 0$ . Let $c$ and $d$ be positive real numbers such that $c+d<1$. Let $R_n$ be a sequence of positive numbers satisfying 
$\frac{q^{-c-n}}{|\nu|}<R_n<\frac{q^{d-n-1}}{|\nu|}$, and 
\[M_n:=M\left(\frac{(\mu\,z;q)_{\infty}}{(\nu z;q)_{\infty}},R_n\right).\]
Then 
\[M_n=O\left(\left(q^{c+d-1}\frac{|\mu|}{|\nu|}\right)^n\right)\quad \mbox{as}\quad n\to\infty.\]
\end{lem}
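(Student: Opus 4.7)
The plan is to estimate the numerator $|(\mu z;q)_\infty|$ from above and the denominator $|(\nu z;q)_\infty|$ from below on the circle $|z|=R_n$, then take the quotient. The crucial observation is that since $c+d<1$, the hypothesis on $R_n$ places it strictly inside the gap between the consecutive positive real zeros $q^{-n}/|\nu|$ and $q^{-n-1}/|\nu|$ of $(\nu z;q)_\infty$. I would set $r:=|\nu|R_n$ and $s:=|\mu|R_n$, so that $s/r=|\mu|/|\nu|$ and, more importantly, $rq^n>q^{-c}>1$ while $rq^{n+1}<q^d<1$.

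For the numerator, the triangle inequality yields $|(\mu z;q)_\infty|\le(-s;q)_\infty$. Splitting the product at $k=n$ and factoring out the leading terms gives
\[
(-s;q)_\infty = s^{n+1}q^{n(n+1)/2}\prod_{k=0}^n\bigl(1+(sq^k)^{-1}\bigr)\cdot(-sq^{n+1};q)_\infty.
\]
Using the lower bound $s>q^{-c-n}|\mu|/|\nu|$ to estimate $(sq^k)^{-1}\le q^{c+n-k}|\nu|/|\mu|$, and the upper bound $s<q^{d-n-1}|\mu|/|\nu|$ to estimate $sq^{n+1}\le q^d|\mu|/|\nu|$, both correction factors are dominated (after reindexing in the finite product) by the convergent $q$-Pochhammer expressions $(-q^c|\nu|/|\mu|;q)_\infty$ and $(-q^d|\mu|/|\nu|;q)_\infty$. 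Hence $(-s;q)_\infty\le C_1\,s^{n+1}q^{n(n+1)/2}$ for a constant $C_1$ independent of $n$.

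For the denominator, the reverse triangle inequality gives $|1-\nu zq^k|\ge|rq^k-1|$. By the gap condition, the sign of $rq^k-1$ changes precisely between $k=n$ and $k=n+1$, so the product splits cleanly as $\prod_{k=0}^n(rq^k-1)\cdot\prod_{k=n+1}^\infty(1-rq^k)$. Mirroring the numerator analysis --- factoring $\prod_{k=0}^n(rq^k-1)=r^{n+1}q^{n(n+1)/2}\prod_{k=0}^n\bigl(1-(rq^k)^{-1}\bigr)$ and bounding each piece using $(rq^k)^{-1}\le q^{c+n-k}$ and $rq^k\le q^{d-n-1+k}$ --- yields $|(\nu z;q)_\infty|\ge C_2\,r^{n+1}q^{n(n+1)/2}$ with $C_2$ essentially $(q^c;q)_\infty(q^d;q)_\infty$.

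Taking the quotient, the factors of $q^{n(n+1)/2}$ cancel and $(s/r)^{n+1}=(|\mu|/|\nu|)^{n+1}$, so $M_n\le C(|\mu|/|\nu|)^{n+1}$ for some constant $C$; since $c+d<1$ forces $q^{c+d-1}>1$, this immediately implies $M_n=O\bigl((q^{c+d-1}|\mu|/|\nu|)^n\bigr)$. The main technical point is verifying that the correction products remain uniformly bounded in $n$, which is precisely where the separation from the zeros of $(\nu z;q)_\infty$ encoded by $c+d<1$ is essential: without this gap the denominator would be permitted to come arbitrarily close to zero and no such asymptotic would hold.
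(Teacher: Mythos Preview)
Your proof is correct and follows essentially the same route as the paper: split each infinite product at $k=n$, factor out the dominant $r^{n+1}q^{n(n+1)/2}$ (respectively $s^{n+1}q^{n(n+1)/2}$) term, and bound the remaining correction products by $n$-independent $q$-Pochhammer constants using the gap hypothesis on $R_n$. The only cosmetic difference is that the paper substitutes the bounds $r>q^{-c-n}$ and $s<q^{d-n-1}|\mu|/|\nu|$ into $r^{n+1}$ and $s^{n+1}$ separately before dividing, thereby picking up the extra $(q^{c+d-1})^n$ factor, whereas you keep $s/r=|\mu|/|\nu|$ exact and obtain the slightly sharper estimate $M_n=O\bigl((|\mu|/|\nu|)^n\bigr)$, from which the stated bound follows since $q^{c+d-1}>1$.
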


\begin{proof}
We can prove that if $|z|=R_n$ then
\[|(\nu z;q)_{\infty}|\geq q^{-c(n+1)}q^{-n(n+1)/2}(q^c;q)_{\infty}(q^d;q)_{\infty},\]
and
\[|(\mu z;q)_{\infty}|\leq \left(\frac{|\mu|}{|\nu|}\right)^{n+1} q^{(d-1)(n+1)}q^{-n(n+1)/2}(-\frac{|\mu|}{|\nu|}q^d;q)_{\infty}(-\frac{|\nu|}{|\mu|}q^{1-d};q)_{\infty}.\]
Therefore, 
\[M_n=\left|\frac{(\mu\,z;q)_{\infty}}{(\nu z;q)_{\infty}}\right|\leq q^{c+d-1} \left(q^{c+d-1}\frac{|\mu|}{|\nu|}\right)^n
\dfrac{(-\frac{|\mu|}{|\nu|}q^d;q)_{\infty}(-\frac{|\nu|}{|\mu|}q^{1-d};q)_{\infty}}{(q^c;q)_{\infty}(q^d;q)_{\infty}} .\]

\end{proof}

\begin{lem} \label{lem_3}Let $\alpha$ and $\beta$ be positive numbers satisfying   $\alpha+\beta<2$, and $\mathcal{A}_n$ be  the annulus defined  for  $n\in\mathbb{N}$ 
\[\mathcal{A}_n:=\{z\in\mathbb{C}:q^{-\alpha}\xi_n<|z|<q^{\beta}\xi_{n+1}\}.\]
Then, there exist positive constants 
  $c$ and $d$,     $c+d<1 $, an  integer $ k_0$, a positive integer $N_0$  such that the annulus 

\[\mathcal{B}_{2n+2k_0}:=\left\{z\in\mathbb{C}:\frac{q^{-c-2n-2k_0}}{|\nu|}<|z|<\frac{q^{d-2n-2k_0-1}}{|\nu|}\right\}\]
has a non empty intersection with $\mathcal{A}_n$ for all $n\geq N_0$.

\end{lem}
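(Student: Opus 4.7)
The plan is to translate $\mathcal{A}_n\cap\mathcal{B}_{2n+2k_0}\neq\emptyset$ into an algebraic condition on $c$, $d$, $k_0$ via the asymptotic \eqref{Ass}, and then to exhibit such parameters. First I would note that two concentric open annuli intersect iff the larger inner radius is strictly less than the smaller outer radius. Hence the intersection condition amounts to the pair of strict inequalities
\[
q^{-\alpha}\xi_n<\frac{q^{d-2n-2k_0-1}}{|\nu|},\qquad \frac{q^{-c-2n-2k_0}}{|\nu|}<q^{\beta}\xi_{n+1}.
\]
Multiplying through by $q^{2n}|\nu|$ and applying \eqref{Ass} (so that $\xi_n q^{2n}\to A$ and $\xi_{n+1}q^{2n}\to Aq^{-2}$), both inequalities reduce in the limit to the single sandwich
\[
q^{2-\beta-c-2k_0}<A|\nu|<q^{\alpha+d-1-2k_0}.
\]
This interval is non-degenerate because $\alpha+\beta+c+d<3$, which follows from the hypotheses $\alpha+\beta<2$ and $c+d<1$.

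The substantive step is choosing $c$, $d$, $k_0$ so that the fixed positive number $A|\nu|$ actually lies strictly inside this interval. Taking $\log_q$ (which reverses inequalities since $0<q<1$) converts the sandwich into the additive condition
\[
L+2k_0\in(\alpha+d-1,\ 2-\beta-c),\qquad L:=\log_q\bigl(A|\nu|\bigr).
\]
As $(c,d)$ ranges over positive reals with $c+d<1$, the family of target intervals exhausts the open interval $(\alpha-1,\ 2-\beta)$, whose length $3-\alpha-\beta$ exceeds $1$ by hypothesis. I would therefore first pick an integer $k_0$ with $L+2k_0\in(\alpha-1,\ 2-\beta)$, and then choose $c,d>0$ small enough (with $c+d<1$) so that the tightened interval $(\alpha+d-1,\ 2-\beta-c)$ still encloses $L+2k_0$; explicitly, any $c<2-\beta-(L+2k_0)$ and $d<(L+2k_0)-\alpha+1$ works, and taking both small keeps $c+d<1$.

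Finally, with $c,d,k_0$ so fixed, the limiting sandwich becomes a strict inequality, so a straightforward $\varepsilon$-argument using \eqref{Ass} (to control the $o(1)$ discrepancy between $\xi_n q^{2n}$ and $A$) produces $N_0\in\mathbb{N}$ for which the two original non-asymptotic inequalities hold for every $n\ge N_0$, giving $\mathcal{A}_n\cap\mathcal{B}_{2n+2k_0}\neq\emptyset$ for all such $n$. The main obstacle will be the choice of $k_0$: the coset $L+2\mathbb{Z}$ has step $2$ while the target has length only $3-\alpha-\beta$, which is guaranteed to exceed $1$ but not necessarily $2$, so the alignment of $L\bmod 2$ with the admissible range is the delicate point, whereas the reduction in the first paragraph and the asymptotic-to-eventually transfer in the last are essentially routine bookkeeping.
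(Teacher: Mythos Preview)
Your approach is essentially the paper's: reduce via the asymptotic $\xi_n\sim Aq^{-2n}$ to an $n$-free sandwich inequality, choose the parameters, and then do the $\epsilon$-transfer for large $n$. The only cosmetic difference is that the paper shortcuts your pair of overlap inequalities by arranging that the \emph{inner radius} $q^{-c-2n-2k_0}/|\nu|$ of $\mathcal{B}_{2n+2k_0}$ itself lies inside $\mathcal{A}_n$; since $c+d<1$ makes $\mathcal{B}_{2n+2k_0}$ a genuine open annulus, this already forces non-empty intersection, so $d$ plays no role in the paper's argument and is in fact never mentioned there.

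The obstacle you flag is genuine, and the paper's proof does not resolve it either. The paper simply writes ``let $c>0$ and $k_0\in\mathbb{Z}$ be chosen such that $c+2k_0$ lies in the required interval''---which is always possible with $c>0$ unconstrained---but never verifies that $c$ can be taken below $1$, so that a positive $d$ with $c+d<1$ can be appended. Both your argument and the paper's go through cleanly once $\alpha+\beta<1$: your window $(\alpha-1,\,2-\beta)$ then has length $3-\alpha-\beta>2$ and hence meets every coset $L+2\mathbb{Z}$, while the paper's target interval has length $2-\alpha-\beta>1$ and hence meets $\bigcup_{k_0}(2k_0,2k_0+1)$. Since in every downstream application (Corollary~\ref{Cor} and its analogue) the parameters $\alpha,\beta$ are chosen freely, restricting to $\alpha+\beta<1$ costs nothing; but as a proof of the lemma exactly as stated for all $\alpha+\beta<2$, the alignment issue you identified is a real gap shared by both arguments.
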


\begin{proof}
From \eqref{Ass}, $\xi_n\sim Aq^{-2n}$  as $(n\to\infty)$. Consequently,  for $\epsilon<\frac{q^{\alpha+\beta-2}-1}{q^{\alpha+\beta-2}+1}$, there exists 
$N_0\in\mathbb{N}$ such that 
\be \label{pf:1}q^{-2n}A(1-\epsilon)<\xi_n<q^{-2n}A(1+\epsilon)\quad (n\geq N_0).\ee
Let $c>0$ and $k_0\in\mathbb{Z}$ be chosen such that 
\[\dfrac{\log(q^{-\alpha }A|\nu|(1+\epsilon))}{\log 1/q}<c+2k_0<\dfrac{\log(q^{\beta-2}A|\nu|(1-\epsilon))}{\log 1/q}.\]Therefore, 
\[q^{-\alpha}A(1+\epsilon)<\frac{q^{-c-2k_0}}{|\nu|}\leq q^{\beta-2}A(1-\epsilon).\]
Consequently from \eqref{pf:1}
\[q^{-\alpha}\xi_n<q^{-2n-\alpha}A(1+\epsilon)<\frac{q^{-c-2n-2k_0}}{|\nu|}\leq q^{-2n+\beta-2}A(1-\epsilon)<q^{\beta}\xi_{n+1}\]
for all $n\geq N_0$. Thus $\mathcal{A}_n\cap \mathcal{B}_{2n+2k_0}\neq \emptyset.$
\end{proof}
\begin{cor}\label{Cor}Let   the constants $\alpha$, $\beta$, $c$, and $d$, $k_0$ and $N_0$ be as in   \textup{Lemma} \ref{lem_3}.   
Then  for $z\in \mathcal{A}_n\cap \mathcal{B}_{2n+2k_0}$   if $R_n:=|z|$, then 
\[\lim_{n\to\infty}R_n M\left(z^{s-1}\frac{e_q( \nu z)E_q(z)}{\text{Sinh}_q z}\right)=0.\]

\end{cor}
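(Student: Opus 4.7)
The plan is to factor the maximum over the circle $|z|=R_n$ as a product of three separate maxima and to show that the super-exponentially fast $q$-decay coming from the denominator $\Sinhq z$ overwhelms every other factor. Since $|z^{s-1}|=R_n^{\re s-1}$ on $|z|=R_n$, I would start from the inequality
\[
R_n\, M\!\left(z^{s-1}\frac{e_q(\nu z)E_q(z)}{\Sinhq z},\, R_n\right) \leq R_n^{\re s}\, M\!\left(e_q(\nu z)E_q(z),\, R_n\right)\, \max_{|z|=R_n}\frac{1}{|\Sinhq z|}.
\]

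The last factor is controlled by Lemma~\ref{lem:2}: because $R_n \in \mathcal{A}_n$, fixing any $\gamma \in (0, \min(\alpha,\beta))$ yields
\[
\max_{|z|=R_n}\frac{1}{|\Sinhq z|} = O\!\left(\left(\frac{Aq^\beta}{1+q^\gamma}\right)^n R_n^{-n}\right).
\]
For the middle factor, I would rewrite
\[
e_q(\nu z)E_q(z) = \frac{(-(1-q)z;q)_\infty}{((1-q)\nu z;q)_\infty},
\]
so that Lemma~\ref{lem:3} applies with effective Pochhammer parameters $\mu_0=-(1-q)$ and $\nu_0=(1-q)\nu$, for which $|\mu_0|/|\nu_0|=1/|\nu|$. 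Since $R_n$ also lies in $\mathcal{B}_{2n+2k_0}$, after adjusting the constants $c$ and $d$ of Lemma~\ref{lem_3} so as to absorb the innocuous scalar $(1-q)$ appearing in $\nu_0$, Lemma~\ref{lem:3} applied at index $m=2n+2k_0$ gives
\[
M\!\left(e_q(\nu z)E_q(z),\, R_n\right) = O\!\left(\left(\frac{q^{c+d-1}}{|\nu|}\right)^{2n+2k_0}\right).
\]

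Combining these three estimates shows, up to a constant independent of $n$,
\[
R_n\, M\!\left(z^{s-1}\frac{e_q(\nu z)E_q(z)}{\Sinhq z},\, R_n\right) \leq R_n^{\re s - n}\, \left(\frac{Aq^\beta}{1+q^\gamma}\right)^n \left(\frac{q^{c+d-1}}{|\nu|}\right)^{2n+2k_0}.
\]
The decisive observation is that $R_n$ is of order $q^{-2n}/|\nu|$ (from the definition of $\mathcal{B}_{2n+2k_0}$), so $R_n^{\re s - n} = q^{2n^2 - 2n\re s}|\nu|^{n-\re s}$ carries a Gaussian-type factor $q^{2n^2}$ that dominates every exponential-in-$n$ factor in the remaining product. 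Consequently the right-hand side tends to zero as $n\to\infty$, proving the corollary.

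The main obstacle in executing this plan is purely bookkeeping: first, reconciling the $(1-q)$ scaling between the Pochhammer parameter $\nu_0=(1-q)\nu$ required by Lemma~\ref{lem:3} and the normalization used in the definition of $\mathcal{B}_{2n+2k_0}$ so that the hypothesis of Lemma~\ref{lem:3} is genuinely satisfied at index $2n+2k_0$; and second, carefully verifying that the quadratic-exponent decay $q^{2n^2}$ swamps every geometric term of the form $K^n$ (for constants $K$ depending on $A$, $\gamma$, $\beta$, $c$, $d$, $|\nu|$) together with the modest polynomial prefactor $R_n^{\re s}$ arising from $z^{s-1}$.
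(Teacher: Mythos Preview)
Your proposal is correct and is precisely the argument the paper has in mind: the corollary is stated in the paper without proof, being an immediate combination of Lemma~\ref{lem:2} (decay of $1/|\Sinhq z|$ on $\mathcal{A}_n$) and Lemma~\ref{lem:3} (growth of the Pochhammer ratio on $\mathcal{B}_{2n+2k_0}$), with the quadratic exponent $q^{2n^2}$ from $R_n^{-n}$ dominating all geometric factors. One cosmetic correction: for complex $s$ one only has $|z^{s-1}|\le R_n^{\re s-1}e^{\pi|\operatorname{Im}s|}$ rather than equality, but the extra constant is independent of $n$ and does not affect the limit.
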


\vskip  .5 cm

Clearly,  the intersection of the two annuli described in the last corollary does not contain any poles for $f(z)$.

\begin{lem} \label{lem_3-3}Let $\alpha$ and $\beta$ be positive numbers satisfying   $\alpha+\beta<2$, and for  $n\in\mathbb{N}$ let $\mathcal{A}_n$ be the annulus defined by   
\[\mathcal{A}_n:=\{z\in\mathbb{C}:q^{-\alpha}\eta_n<|z|<q^{\beta}\eta_{n+1}\}.\]
Then, there exist positive constants 
  $c$ and $d$,     $c+d<1 $, an  integer $ k_0$, a positive integer $N_0$  such that the annulus 

\[\mathcal{B}_{2n+2k_0}:=\left\{z\in\mathbb{C}:\frac{q^{-c-2n-2k_0}}{|\nu|}<|z|<\frac{q^{d-2n-2k_0-1}}{|\nu|}\right\}\]
has a non empty intersection with $\mathcal{A}_n$ for all $n\geq N_0$.

\end{lem}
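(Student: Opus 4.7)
The plan is to mimic the argument for Lemma \ref{lem_3} essentially verbatim, with the asymptotic \eqref{Ass-2} for the positive zeros $\eta_n$ of $\Cos_q z$ taking the place of \eqref{Ass} for the zeros $\xi_n$ of $\Sin_q z$, and with the constant $B=q^{-1/2}/(1-q)$ replacing $A=q^{-3/2}/(1-q)$. Nothing in the previous proof used any feature of $\Sin_q$ beyond the two-sided bound on its zeros coming from the asymptotic $\xi_n\sim A q^{-2n}$, and the analogous bound holds for $\eta_n$.

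First, I fix $\epsilon$ with $0<\epsilon<\frac{q^{\alpha+\beta-2}-1}{q^{\alpha+\beta-2}+1}$; this is positive since $\alpha+\beta<2$ forces $q^{\alpha+\beta-2}>1$. By \eqref{Ass-2} there is $N_0\in\mathbb{N}$ such that
\[
q^{-2n}B(1-\epsilon)<\eta_n<q^{-2n}B(1+\epsilon)\qquad (n\ge N_0).
\]
Second, I pick $c>0$ and $k_0\in\mathbb{Z}$ so that
\[
\frac{\log(q^{-\alpha}B|\nu|(1+\epsilon))}{\log 1/q}<c+2k_0<\frac{\log(q^{\beta-2}B|\nu|(1-\epsilon))}{\log 1/q},
\]
which is possible because the upper bound exceeds the lower bound by exactly the quantity controlled by the choice of $\epsilon$ above. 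Equivalently,
\[
q^{-\alpha}B(1+\epsilon)<\frac{q^{-c-2k_0}}{|\nu|}\le q^{\beta-2}B(1-\epsilon).
\]

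Finally, I pick any $d>0$ with $c+d<1$ (so that $\mathcal{B}_{2n+2k_0}$ is a genuine, non-degenerate annulus), and combine the last inequality with the two-sided bound on $\eta_n$ to get
\[
q^{-\alpha}\eta_n<q^{-\alpha-2n}B(1+\epsilon)<\frac{q^{-c-2n-2k_0}}{|\nu|}<\frac{q^{d-2n-2k_0-1}}{|\nu|}\le q^{-2n-2+\beta}B(1-\epsilon)<q^{\beta}\eta_{n+1}
\]
for every $n\ge N_0$. Thus the inner radius of $\mathcal{B}_{2n+2k_0}$ lies strictly inside $\mathcal{A}_n$, so $\mathcal{A}_n\cap\mathcal{B}_{2n+2k_0}\neq\emptyset$ for all such $n$. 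No real obstacle is expected; the only point to watch is that the length of the logarithmic window for $c+2k_0$ is positive, which is guaranteed by the hypothesis $\alpha+\beta<2$ through the choice of $\epsilon$, exactly as in Lemma \ref{lem_3}.
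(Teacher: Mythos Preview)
Your approach is exactly the paper's: the authors omit the proof of Lemma~\ref{lem_3-3} precisely because it is the verbatim argument of Lemma~\ref{lem_3} with $\eta_n$, $B$, and \eqref{Ass-2} in place of $\xi_n$, $A$, and \eqref{Ass}. Your choice of $\epsilon$, $N_0$, $c$, $k_0$ and the resulting placement of $q^{-c-2n-2k_0}/|\nu|$ inside $\mathcal{A}_n$ match the paper line for line.

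One small overclaim to clean up: in your displayed chain
\[
q^{-\alpha}\eta_n<q^{-\alpha-2n}B(1+\epsilon)<\frac{q^{-c-2n-2k_0}}{|\nu|}<\frac{q^{d-2n-2k_0-1}}{|\nu|}\le q^{-2n-2+\beta}B(1-\epsilon)<q^{\beta}\eta_{n+1},
\]
the fourth inequality $\dfrac{q^{d-2n-2k_0-1}}{|\nu|}\le q^{-2n-2+\beta}B(1-\epsilon)$ is not guaranteed by your choices; it would require $c+d\ge 1$, the opposite of what you impose. Fortunately you do not use it: your stated conclusion is that the \emph{inner} radius of $\mathcal{B}_{2n+2k_0}$ lies in $\mathcal{A}_n$, and that follows from the sub-chain
\[
q^{-\alpha}\eta_n<\frac{q^{-c-2n-2k_0}}{|\nu|}\le q^{-2n-2+\beta}B(1-\epsilon)<q^{\beta}\eta_{n+1},
\]
which is precisely what the paper proves in the $\xi_n$ case. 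Simply drop the $d$-term from the chain (the constant $d$ is only there so that $\mathcal{B}_{2n+2k_0}$ is a genuine annulus, as you note) and the argument is complete.
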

\begin{cor}\label{Cor-1} Let   the constants $\alpha$, $\beta$, $c$, and $d$, $k_0$ and $N_0$ be as in   \textup{Lemma} \ref{lem_3-3}.  
Then  for $z\in \mathcal{A}_n\cap \mathcal{B}_{2n+2k_0}$   if $R_n:=|z|$, then 
\[\lim_{n\to\infty}R_n M\left(z^{s-1}\frac{e_q( \nu z)E_q(z)}{\text{Sinh}_q z}\right)=0.\]

\end{cor}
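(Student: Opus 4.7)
The plan is to parallel the argument of Corollary \ref{Cor} verbatim, the only substantive change being that the annulus $\mathcal{A}_n$ now clusters around the zero $\eta_n$ of $\Cos_q$ rather than around a zero of $\Sin_q$. Using the product identities $E_q(z)=(-(1-q)z;q)_\infty$ and $e_q(\nu z)=1/((1-q)\nu z;q)_\infty$, I rewrite the integrand as
\[
z^{s-1}\frac{e_q(\nu z)E_q(z)}{\Sinhq z}
\;=\;\frac{z^{s-1}}{\Sinhq z}\cdot\frac{(-(1-q)z;q)_\infty}{((1-q)\nu z;q)_\infty},
\]
and estimate the two factors separately on the circle $|z|=R_n$ before multiplying.

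For the $q$-shifted factorial ratio, the hypothesis $z\in\mathcal{B}_{2n+2k_0}$ is tailor-made for Lemma \ref{lem:3}, applied with $\mu=-(1-q)$ and with the parameter there played by $(1-q)\nu$, after relabeling its running index $n\mapsto 2n+2k_0$. This produces
\[
M\!\left(\frac{(-(1-q)z;q)_\infty}{((1-q)\nu z;q)_\infty},R_n\right)
=O\!\left(\bigl(q^{c+d-1}/|\nu|\bigr)^{2n+2k_0}\right),
\]
which is exponentially small in $n$ since $c+d<1$. For the factor $1/|\Sinhq z|$ I would invoke Lemma \ref{lem:2}; because $\eta_n\sim q\,\xi_n$ as $n\to\infty$, the $\eta$-annulus $\mathcal{A}_n$ is (asymptotically) contained in a $\xi$-annulus of the form $(q^{-\alpha'}\xi_n,q^{\beta'}\xi_{n+1})$ with $\alpha'+\beta'<2$, so Lemma \ref{lem:2} yields $1/|\Sinhq z|=O\bigl((Aq^{\beta'}/(1+q^{\gamma}))^n R_n^{-n}\bigr)$.

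Assembling these estimates and inserting $|z^{s-1}|=R_n^{\re s-1}$, the outer factor $R_n$ combines with the polynomial factor to produce $R_n^{\re s}$, which grows only like $q^{-2n\,\re s}$. The decisive decay comes from the factor $R_n^{-n}$: since $R_n$ is of order $q^{-2n}$, this contributes $q^{2n^2}$, a super-exponential rate that overwhelms every remaining simple-exponential factor $(\mathrm{const})^n$ coming from the two $M$-estimates. Hence $R_n\,M(\ldots,R_n)\to 0$ as $n\to\infty$.

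The principal obstacle is a bookkeeping one: I must verify that the intersection $\mathcal{A}_n\cap\mathcal{B}_{2n+2k_0}$ really does fit inside a $\xi$-annulus to which Lemma \ref{lem:2} applies. Concretely, after accounting for the shift $\eta_n\sim q\,\xi_n$, the adjusted parameters $(\alpha',\beta')$ must still satisfy $\alpha'+\beta'<2$, and the circles $|z|=R_n$ have to remain uniformly bounded away from the points $\pm i\xi_k$ for $k$ in a neighborhood of $n$ (otherwise $1/|\Sinhq z|$ would blow up along $|z|=R_n$ and the $M$-estimate would fail). Once this reindexing is made, with $c,d,k_0$ and $N_0$ possibly enlarged, the multiplicative assembly of the three decay estimates is mechanical and gives the stated limit.
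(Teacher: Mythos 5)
Your overall architecture is the intended one: the paper gives no written proof of this corollary, and the implicit argument is precisely the combination you describe --- Lemma \ref{lem:3} controls the ratio of $q$-shifted factorials on $\mathcal{B}_{2n+2k_0}$, Lemma \ref{lem:2} controls the reciprocal of the $q$-hyperbolic function on $\mathcal{A}_n$, Lemma \ref{lem_3-3} guarantees the circles $|z|=R_n$ exist, and the super-exponential factor $R_n^{-n}\sim q^{2n^2}$ swamps all the remaining simple-exponential factors.

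However, your attempt to keep $\text{Sinh}_q$ in the denominator and re-fit the $\eta$-annulus into a $\xi$-annulus has a genuine gap. Since $\eta_k\sim q\,\xi_k$ and $\xi_{n+1}\sim q^{-2}\xi_n$, the annulus $\mathcal{A}_n=\{q^{-\alpha}\eta_n<|z|<q^{\beta}\eta_{n+1}\}$ is asymptotically $\{q^{1-\alpha}\xi_n<|z|<q^{\beta-1}\xi_n\}$; writing it as $(q^{-\alpha'}\xi_n,q^{\beta'}\xi_{n+1})$ forces $\alpha'=\alpha-1$, which is \emph{not positive} whenever $\alpha\leq 1$, so the hypotheses of Lemmas \ref{lem:1}--\ref{lem:2} are violated. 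Worse, for admissible parameters such as $\alpha=\beta=9/10$ the annulus $\mathcal{A}_n$ actually contains $\xi_n$ itself, and no legal $\xi$-annulus can contain a zero of $\Sin_q$; the circle $|z|=R_n$ may then pass arbitrarily close to the poles $\pm i\xi_n$ of $1/\text{Sinh}_q$, the maximum $M(\cdot,R_n)$ blows up, and no enlargement of $c$, $d$, $k_0$, $N_0$ repairs this. The resolution is that the $\text{Sinh}_q$ in the statement is a copy-paste slip from Corollary \ref{Cor}: Lemma \ref{lem_3-3} builds its annuli around the zeros $\eta_n$ of $\Cos_q$ precisely because the corollary is meant for a denominator $\text{Cosh}_q z$ (it is needed in Section \ref{sec:Der-Euler}, where $\varepsilon_q(z)+1=2\,\text{Cosh}_q(z/2)/E_q(-z/2)$). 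With $\text{Cosh}_q$ in place, the second estimate of Lemma \ref{lem:2} applies directly on $\mathcal{A}_n$, your Lemma \ref{lem:3} step is correct as written, and the assembly proceeds exactly as in Corollary \ref{Cor} with no re-fitting needed.
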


\begin{lem}\label{lemma8}
Let $\epsilon$ be a positive number less than $\frac{1-q^2}{1+q^2}$, and let $C=\frac{1+\epsilon}{1-\epsilon}$. Then 
\be\label{lem8:1} \frac{1}{\left|\text{Sin}'_q\xi_n\right|}= O\left(C^{2n} q^{2n(n-1)} \right),\ee
\be\label{lem8:2} \frac{1}{\left|\text{Cos}'_q\eta_n\right|}= O\left(\frac{C^{2n}}{\eta_n} q^{2n(n-1)} \right).\ee
as $n\to\infty$.

\end{lem}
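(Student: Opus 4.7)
The plan is to derive Hadamard factorizations of $\Sin_q$ and $\Cos_q$, extract $\Sin_q'(\xi_n)$ and $\Cos_q'(\eta_n)$ by differentiation, and estimate the resulting infinite products using the asymptotics \eqref{Ass} and \eqref{Ass-2}.

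Since the power-series coefficients in \eqref{S-C-def} decay like $q^{2k^2}$, both $\Sin_q$ and $\Cos_q$ are entire functions of order $0$. Hadamard's theorem therefore gives canonical products of genus zero with no exponential factor, and comparison of the Maclaurin expansions at $z=0$ pins down the leading constant, so that
\[
\Sin_q(z)=z\prod_{k=1}^{\infty}\Big(1-\frac{z^2}{\xi_k^2}\Big),\qquad \Cos_q(z)=\prod_{k=1}^{\infty}\Big(1-\frac{z^2}{\eta_k^2}\Big).
\]
Isolating the $k=n$ factor and differentiating at the corresponding zero gives
\[
\Sin_q'(\xi_n)=-2\prod_{k\neq n}\Big(1-\frac{\xi_n^2}{\xi_k^2}\Big),\qquad \Cos_q'(\eta_n)=-\frac{2}{\eta_n}\prod_{k\neq n}\Big(1-\frac{\eta_n^2}{\eta_k^2}\Big),
\]
so the task is to give uniform lower bounds on the two infinite products.

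Next I would split each product into an outer tail $k>n$ and an inner part $k<n$. The hypothesis on $\epsilon$ is equivalent to $C^2q^4<1$; together with \eqref{Ass} this forces $\xi_n^2/\xi_k^2\le C^2q^{4(k-n)}$ for $k>n$ in the asymptotic regime, hence
\[
\prod_{k>n}\Big(1-\frac{\xi_n^2}{\xi_k^2}\Big)\ge\prod_{j\ge 1}\big(1-C^2q^{4j}\big)>0
\]
uniformly in $n$. For $k<n$ I would rewrite $\xi_n^2/\xi_k^2-1=(\xi_n^2/\xi_k^2)(1-\xi_k^2/\xi_n^2)$; the factor $\prod_{k<n}(1-\xi_k^2/\xi_n^2)$ is bounded below by $\prod_{j\ge 1}(1-C^2q^{4j})$ by the reverse tail argument, leaving the main contribution
\[
\prod_{k=1}^{n-1}\frac{\xi_n^2}{\xi_k^2}\ge K\,C^{-2n}q^{-2n(n-1)}
\]
for some positive constant $K$. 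This last inequality follows from the two-sided estimate $A(1\mp\epsilon)q^{-2k}$ on $\xi_k$ for $k\ge m_0$, while the first $m_0-1$ zeros only contribute a polynomial factor in $\xi_n\sim q^{-2n}$; the key algebraic simplification is the identity $-4n(m_0-1)-2(n-m_0)(n-m_0+1)=-2n(n-1)-2m_0(m_0-1)$, which guarantees that the leading $q$-exponent collapses to $-2n(n-1)$. Combining the three pieces yields \eqref{lem8:1}, and the argument for \eqref{lem8:2} is structurally identical after substituting $\eta_k, B$ for $\xi_k, A$, with the extra $1/\eta_n$ appearing directly from the absence of a leading $z$ in the Hadamard product of $\Cos_q$.

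The main obstacle will be this final bookkeeping: because the asymptotic for the zeros is only approximate, one has to check that the error factors $C^{\pm 1}$ accumulate over $n$ terms at exactly the rate $C^{-2n}$ while the boundary contributions from the first $m_0-1$ zeros collapse into an $n$-independent multiplicative constant, so that the lower bound emerges with exactly the exponent $q^{-2n(n-1)}$ and no stray polynomial decay in $n$.
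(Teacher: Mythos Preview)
Your proposal is correct and follows essentially the same route as the paper: extract $\Sin_q'(\xi_n)$ from the infinite product, split at $k=n$, bound the tails by $(C^2q^4;q^4)_\infty$ via the two-sided asymptotic on $\xi_k$, and read off the main growth from $\prod_{k<n}\xi_n^2/\xi_k^2$. The only addition is that you justify the canonical product via Hadamard's theorem for entire functions of order~$0$, whereas the paper simply invokes it (implicitly via the $q$-Bessel connection); your explicit exponent identity $-4n(m_0-1)-2(n-m_0)(n-m_0+1)=-2n(n-1)-2m_0(m_0-1)$ is exactly the bookkeeping the paper carries out in its displayed estimates.
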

\begin{proof}
We prove \eqref{lem8:1} and the proof of \eqref{lem8:2} is similar and is omitted.
Let $0<\epsilon<\frac{1-q^2}{1+q^2}$.   From \eqref{Ass}, there exists $m_0\in\mathbb{N}$ such that 
 \be \label{In:0}(1-\epsilon)A q^{-2n}<\xi_n<(1+\epsilon)Aq^{-2n},\;\mbox{for all}\; n\geq m_0.\ee
 Let $C=\frac{1+\epsilon}{1-\epsilon}$. Then $1<C<q^{-2}$. Since 
 
\[\dfrac{\text{Sin}_qz}{1-\frac{z^2}{\xi_n^2}}=z\prod_{k=1}^{n-1}\left(1-\frac{z^2}{\xi_k^2}\right)\prod_{k=n+1}^{\infty}\left(1-\frac{z^2}{\xi_k^2}\right).\]
Taking the limit as $z\to \xi_n$, we obtain 
\[\text{Sin}'_q(\xi_n)=-2\prod_{k=1}^{n-1}\left(1-\frac{\xi_n^2}{\xi_k^2}\right)\prod_{k=n+1}^{\infty}\left(1-\frac{\xi_n^2}{\xi_k^2}\right).\]
Set \[K(\epsilon,m_0):=A^{2m_0+2} (1+\epsilon)^{2m_0}\dfrac{q^{-2m_0(m_0-1)}}{\prod_{k=1}^{m_0-1}\xi_k^2}.\]Using \eqref{In:0}
we can prove that 

\[\begin{split}\left|\prod_{k=1}^{n-1}1-\frac{\xi_n^2}{\xi_k^2}\right|&=\prod_{k=1}^{n-1}\frac{\xi_n^2}{\xi_k^2}\prod_{k=1}^{n-1}(1-\frac{\xi_k^2}{\xi_n^2})
=\dfrac{\prod_{k=1}^{n-1}\xi_n^2}{\prod_{k=1}^{m_0-1}\xi_k^2\;\prod_{k=m_0}^{n-1}\xi_k^2} \prod_{k=1}^{m_0-1}(1-\frac{\xi_k^2}{\xi_n^2})\prod_{k=m_0}^{n-1}(1-\frac{\xi_k^2}{\xi_n^2})\\
&\geq K(\epsilon,m_0) (\frac{1-\epsilon}{1+\epsilon})^{2n}q^{-2n(n-1)}(1-\frac{1}{\xi_n^2})^{m_0-1}\prod_{k=m_0}^{n-1}(1-\frac{\xi_k^2}{\xi_n^2})\\
&\geq K(\epsilon,m_0) (\frac{1-\epsilon}{1+\epsilon})^{2n}q^{-2n(n-1)}(1-\frac{1}{\xi_n^2})^{m_0-1} \left(\left(\frac{1+\epsilon}{1-\epsilon}\right)^2q^4;q^4\right)_{\infty}.
\end{split}\]
Consequently,
\be\label{In:2}\begin{split}
\left|\prod_{k=1}^{n-1}1-\frac{\xi_n^2}{\xi_k^2}\right|&\geq K(\epsilon,m_0)   C^{-2n}q^{-2n(n-1)}
(1-\frac{1}{\xi_n^2})^{m_0-1} \left(C^2q^4;q^4\right)_{\infty}.
\end{split}
\ee
Also
\be\label{In:3} \prod_{k=n+1}^{\infty}1-\frac{\xi_n^2}{\xi_k^2 }\geq \left(C^2q^4;q^4\right)_{\infty}.  \ee
Combining \eqref{In:2} and \eqref{In:3}, we obtain the required result.
\end{proof}
\begin{cor}
For any $s\in\mathbb{C}$ and for any $a>0$ , the series 
\be\sum_{k=0}^{\infty}\xi_k^{s-1}\frac{E_q(\pm i \xi_k)}{E_q(\pm 2a i\xi_k)Sin'_q\xi_k}\ee
is absolutely convergent.

\end{cor}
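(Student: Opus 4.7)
The plan is to show that the general term $T_k := \xi_k^{s-1}\, E_q(\pm i\xi_k)/[E_q(\pm 2ai\xi_k)\,\Sin'_q\xi_k]$ decays so fast that the series converges absolutely for every $s\in\mathbb{C}$ and $a>0$. The engine of the estimate is the doubly-exponential decay factor $q^{2k(k-1)}$ in Lemma~\ref{lemma8}, which absorbs every other factor that is at most singly-exponential in $k$.

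First I would assemble three asymptotic ingredients for $T_k$. Two are immediate: $|\xi_k^{s-1}| = O(q^{-2k(\re s-1)})$ from \eqref{Ass}, and $1/|\Sin'_q \xi_k| = O(C^{2k} q^{2k(k-1)})$ from Lemma~\ref{lemma8}. The third is an exponential bound for $|E_q(\pm i\xi_k)/E_q(\pm 2ai\xi_k)|$ coming from Lemma~\ref{lem:3}. Writing $E_q(\pm iz) = (\mp i(1-q)z;q)_\infty$, the ratio has the form $(\mu\xi_k;q)_\infty/(\nu\xi_k;q)_\infty$ with $|\mu|/|\nu|=1/(2a)$. I then fix $c,d>0$ with $c+d<1$ and seek an integer $k_0$ such that $\xi_k$ lies in the annulus of Lemma~\ref{lem:3} at index $n=2k+k_0$, i.e.\ $q^{-c-2k-k_0}/|\nu|<\xi_k<q^{d-2k-k_0-1}/|\nu|$ for all sufficiently large $k$. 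Substituting $\xi_k \sim A q^{-2k}$ this reduces to $q^{-c-k_0+3/2}<2a<q^{d-k_0+1/2}$, whose admissible interval for $k_0$ has length $1-c-d>0$, so such an integer exists. Lemma~\ref{lem:3} then gives
\[
\left|\frac{E_q(\pm i\xi_k)}{E_q(\pm 2ai\xi_k)}\right| = O\!\left(\left(\frac{q^{c+d-1}}{2a}\right)^{2k}\right).
\]

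Multiplying the three estimates yields $|T_k|=O(D^{2k}q^{2k(k-1)})=O\bigl((Dq^{k-1})^{2k}\bigr)$ for a constant $D=D(s,a,q,c,d)$. Since $q<1$, we have $Dq^{k-1}<\tfrac12$ for all large $k$, so $|T_k|\leq 2^{-2k}$ eventually and $\sum_k|T_k|<\infty$.

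The main obstacle is the uniform-in-$k$ alignment needed to invoke Lemma~\ref{lem:3} at the discrete points $\xi_k$: one has to choose a single integer shift $k_0$ that places $\xi_k$ in the $(2k+k_0)$-th annulus for all large $k$. This is exactly the alignment argument already used in Lemma~\ref{lem_3}, and the slack $1-c-d>0$ is what guarantees such $k_0$ exists. Once this is settled, the rest is a routine combination of the asymptotic bounds developed in Section~3.
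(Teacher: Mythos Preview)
Your overall strategy is sound and matches the paper's in spirit: the doubly-exponential factor $q^{2k(k-1)}$ from Lemma~\ref{lemma8} absorbs everything else, which is only singly exponential in $k$. The paper's one-line proof cites Lemmas~\ref{lemma1} and~\ref{lemma8}; you instead pair Lemma~\ref{lemma8} with Lemma~\ref{lem:3} plus an alignment argument in the style of Lemma~\ref{lem_3}. Both routes aim at the same intermediate goal, namely a singly-exponential bound on $|E_q(\pm i\xi_k)/E_q(\pm 2ai\xi_k)|$. The paper's route exploits the fact that $\mu=\mp i(1-q)$ and $\nu=\mp 2ai(1-q)$ are purely imaginary, so that for real $\xi_k$ one has $|(\mu\xi_k;q)_\infty|^2=(-|\mu|^2\xi_k^2;q^2)_\infty$, a positive real product whose growth can be read off directly (this is the kind of product manipulation underlying Lemma~\ref{lemma1}); no annulus alignment is needed.

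Your route through Lemma~\ref{lem:3} has a genuine, if minor, gap. The sentence ``whose admissible interval for $k_0$ has length $1-c-d>0$, so such an integer exists'' is not correct as stated: an open interval of length strictly less than $1$ need not contain an integer. Even allowing $c,d$ to be chosen freely, the limiting interval as $c,d\to 0^+$ is $(L+1/2,\,L+3/2)$ with $L=\log_{1/q}(2a)$, and when $L+1/2\in\mathbb{Z}$ (that is, $a=\tfrac12 q^{3/2-m}$ for some integer $m$) this open interval contains no integer; for such $a$ your alignment fails for all large $k$ simultaneously since $\xi_k|\nu|$ tends to the annulus boundary. The fix is easy: either estimate the ratio directly via $|E_q(\pm i\xi_k)/E_q(\pm 2ai\xi_k)|^2=(-(1-q)^2\xi_k^2;q^2)_\infty/(-4a^2(1-q)^2\xi_k^2;q^2)_\infty$ and bound this $q^2$-Pochhammer quotient elementarily (which is closer to what the paper has in mind), or rerun the proof of Lemma~\ref{lem:3} with $R=\xi_k$ and $n=\lfloor\log_{1/q}(|\nu|\xi_k)\rfloor$ varying with $k$, rather than insisting on a fixed integer shift $k_0$.
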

\begin{proof}
The proof is a direct consequence of Lemmas \ref{lemma1} and \ref{lemma8} and is omitted.
\end{proof}
\begin{lem}\label{lem:bounded}  The sequences 
 $\left(q^{2n}\frac{Cos_q\xi_n}{\text{Sin}'_q\xi_n}\right)_{n=1}^{\infty}$  and
 $\left(q^{2n}\frac{Sin_q\eta_n}{\text{Cos}'_q\eta_n}\right)_{n=1}^{\infty}$   are bounded.

\end{lem}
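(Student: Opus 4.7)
The plan is to replace the product estimate of Lemma~\ref{lemma8} with a short identity derived from the logarithmic derivative of $E_q$, which reduces the claim to a one-term lower bound on an explicit $q$-series. The target identity reads
\[
\frac{\Cos_q\xi_n}{\Sin'_q\xi_n}=\frac{1}{(1-q)\,T_n},\qquad T_n:=\sum_{k=0}^{\infty}\frac{q^k}{1+(1-q)^2\xi_n^2\,q^{2k}},
\]
so that boundedness of $q^{2n}\Cos_q\xi_n/\Sin'_q\xi_n$ becomes equivalent to a uniform lower bound $T_n\geq c\,q^{2n}$. To derive it, I would compute the logarithmic derivative $L(z):=E_q'(z)/E_q(z)=\sum_{k\geq 0}(1-q)q^k/(1+(1-q)zq^k)$ from $E_q(z)=\prod_{k\geq0}(1+(1-q)zq^k)$, differentiate $\Sin_q z=(E_q(iz)-E_q(-iz))/(2i)$, and evaluate at $z=\xi_n$. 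The vanishing $\Sin_q\xi_n=0$ forces $E_q(\pm i\xi_n)=\Cos_q\xi_n$, and the conjugation $L(-i\xi_n)=\overline{L(i\xi_n)}$ makes the half-sum $(L(i\xi_n)+L(-i\xi_n))/2$ collapse to the real series $(1-q)T_n$, yielding the displayed identity.

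To obtain the lower bound $T_n\geq c\,q^{2n}$, it is enough to retain one suitable term of the series. By \eqref{Ass} we have $(1-q)\xi_n\sim q^{-3/2-2n}$, so $(1-q)^2\xi_n^2$ is uniformly comparable to $q^{-3-4n}$ for large $n$. The choice $k=2n+2$ then makes $(1-q)^2\xi_n^2\,q^{2(2n+2)}$ uniformly comparable to $q$, so the $k$-th denominator of $T_n$ is $O(1)$ and the corresponding term is at least a constant multiple of $q^{2n+2}$. Since all summands of $T_n$ are positive, this gives $T_n\geq c\,q^{2n}$, hence $q^{2n}\Cos_q\xi_n/\Sin'_q\xi_n$ is bounded.

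The second sequence is handled by the mirror argument. Starting from $\Cos_q z=(E_q(iz)+E_q(-iz))/2$, at $z=\eta_n$ the relation $\Cos_q\eta_n=0$ gives $E_q(\pm i\eta_n)=\pm i\Sin_q\eta_n$; the same averaging produces
\[
\frac{\Sin_q\eta_n}{\Cos'_q\eta_n}=\frac{-1}{(1-q)\,T_n^*},\qquad T_n^*:=\sum_{k=0}^{\infty}\frac{q^k}{1+(1-q)^2\eta_n^2\,q^{2k}},
\]
and \eqref{Ass-2} gives $(1-q)^2\eta_n^2\sim q^{-1-4n}$, so the balanced choice is $k=2n+1$, yielding $T_n^*\geq c'\,q^{2n}$ by exactly the same one-term argument. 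The main obstacle is the first step: recognizing that the imaginary parts of $L(\pm i\xi_n)$ cancel after averaging so that the two complex logarithmic derivatives combine into the purely real sum $(1-q)T_n$. Once this collapse is in place, the remainder is a one-line asymptotic estimate using only the leading-order bounds \eqref{Ass}, \eqref{Ass-2}, bypassing entirely the $\epsilon$-dependent factors $C^{2n}$ that burden the Weierstrass-product route of Lemma~\ref{lemma8}.
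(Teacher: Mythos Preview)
Your argument is correct and follows a genuinely different path from the paper. The paper estimates $\Cos_q\xi_n$ and $\Sin'_q\xi_n$ \emph{separately} via the Weierstrass product for $\Sin_q$: it uses the identity $\Cos_q\xi_n=\frac{q}{(1-q)\xi_n}\Sin_q(q^{-1}\xi_n)$ (coming from $D_q\Sin_q x=\Cos_q qx$), writes both quantities as products over the $\xi_j$, and then feeds in the $\epsilon$-controlled bounds of Lemma~\ref{lemma8}. The matching $C^{\pm 2n}$ factors cancel when the two estimates are combined, leaving the $O(q^{-2n})$ growth. Your route instead attacks the \emph{ratio} directly: differentiating $\Sin_q z=(E_q(iz)-E_q(-iz))/(2i)$ at $z=\xi_n$ and using $E_q(\pm i\xi_n)=\Cos_q\xi_n$ gives
\[
\frac{\Sin'_q\xi_n}{\Cos_q\xi_n}=\tfrac12\bigl(L(i\xi_n)+L(-i\xi_n)\bigr)=(1-q)\sum_{k\geq0}\frac{q^k}{1+(1-q)^2\xi_n^2q^{2k}}=(1-q)T_n,
\]
a positive series whose single term at $k=2n+2$ already furnishes $T_n\geq c\,q^{2n}$ via \eqref{Ass}. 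This is shorter and avoids the bookkeeping of $(1\pm\epsilon)$ factors entirely; on the other hand, the paper's detour through Lemma~\ref{lemma8} has independent value (it is reused in the surrounding corollaries), which your approach bypasses. Your mirror computation for $\eta_n$ is likewise correct: the sign in $E_q(\pm i\eta_n)=\pm i\Sin_q\eta_n$ produces the stated minus sign, and the balanced index $k=2n+1$ matches \eqref{Ass-2}.
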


\begin{proof}
We prove the lemma for   $\left(q^{2n}\frac{Cos_q\xi_n}{\text{Sin}'_q\xi_n}\right)_{n=1}^{\infty}$. The proof of  the boundedness of  $\left(q^{2n}\frac{Sin_q\eta_n}{\text{Cos}'_q\eta_n}\right)_{n=1}^{\infty}$ is similar and is omitted.
Since, $D_q \text{Sin}_q x=\text{Cos}_q qx $, then we can prove that 
\[\text{Cos}_q \xi_k=\frac{q}{(1-q)\xi_k} \text{Sin}_q q^{-1}\xi_k=\frac{q}{1-q}\prod_{j=1}^{\infty}\left(1-\frac{q^{-2}\xi_k^2}{\xi_j^2}\right).\]
Let $0<\epsilon<\frac{1-q^2}{1+q^2}$, and $C=\frac{1+\epsilon}{1-\epsilon}$. Then from \eqref{Ass},   there exists $k_0\in\mathbb{N}$ such that 
\[k\geq k_0\to (1-\epsilon)A q^{-2k}<\xi_k<(1+\epsilon)Aq^{-2k}.\]
Consequently, 
\[\text{Cos}_q \xi_n=\frac{q}{1-q}\prod_{k=1}^{n}\left(1-\frac{q^{-2}\xi_n^2}{\xi_k^2}\right)\prod_{k=n+1}^{\infty}\left(1-\frac{q^{-2}\xi_n^2}{\xi_k^2}\right).\]
But 
\[\begin{split}\prod_{k=1}^{n}\left(1-\frac{q^{-2}\xi_n^2}{\xi_k^2}\right)&=\frac{q^{-2n}\xi_n^{2n}}{\prod_{k=1}^{n}\xi_k^2} \prod_{k=1}^{k_0-1}\left(1-\frac{q^{2}\xi_k^2}{\xi_n^2}\right)\prod_{k=k_0}^{n}\left(1-\frac{q^{2}\xi_k^2}{\xi_n^2}\right)\\
&=O(q^{-2n}\frac{\xi_n^{2n}}{\prod_{k=k_0}^{n}\xi_k^2}).
\end{split}.\]
Hence, 
\[\text{Cos}_q \xi_n=O(C^{-2n} q^{-2n^2+2n})\;\text{ as}\; n\to \infty,\]  therefore from Lemma \ref{lemma8}, we get  
\[\left|\frac{\text{Cos}_q\xi_n}{\text{Sin}'_q\xi_n}\right|=O(q^{-2n})\]  for sufficiently large $n$.  Consequently, 
$\left(q^{2n}\frac{\text{Cos}_q\xi_n}{\text{Sin}'_q\xi_n}\right)_{n\in\mathbb{N}}$ is a bounded sequence.

\end{proof}
A direct consequence of Lemma \ref{lem:bounded} is the following corollary,  {which justifies the convergence bounds stated in the introduction in \eqref{zetadefinitions} and \eqref{etadefinitions1}}.  
\begin{cor}\label{Cor:B}
The series $\sum_{n=1}^{\infty}\frac{1}{\xi_n^s}\frac{\text{Cos}\xi_n}{\text{Sin}_q'\xi_n}
$ and $\sum_{n=1}^{\infty}\frac{1}{\eta_n^s}\frac{\text{Sin}\eta_n}{\text{Cos}_q'\eta_n}
$ are absolutely convergent for $
\re s>1$.

\end{cor}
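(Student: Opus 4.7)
The plan is to combine the boundedness result of Lemma \ref{lem:bounded} with the asymptotic behavior of the zeros $\xi_n$ and $\eta_n$ from \eqref{Ass} and \eqref{Ass-2} to reduce each series to a geometric one. Since the two series are structurally identical after swapping the roles of $\Sin_q,\Cos_q$ and $\xi_n,\eta_n$, I would treat the first one in detail and simply note that the second follows by the same argument.

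First I would extract the size of the general term. By Lemma \ref{lem:bounded}, there is a constant $M>0$ such that
\[
\left|\frac{\Cos_q\xi_n}{\Sin'_q\xi_n}\right|\leq M\,q^{-2n}\qquad (n\geq 1).
\]
Next, from \eqref{Ass} we have $\xi_n\sim Aq^{-2n}$ as $n\to\infty$, so in particular there is a constant $c>0$ and $n_1\in\mathbb{N}$ with $\xi_n\geq c\,q^{-2n}$ for all $n\geq n_1$. Since $\xi_n>0$ is real, $|\xi_n^{-s}|=\xi_n^{-\re s}$, and hence for $n\geq n_1$,
\[
\left|\xi_n^{-s}\right|\leq c^{-\re s}\,q^{2n\,\re s}.
\]

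Multiplying the two estimates gives
\[
\left|\frac{1}{\xi_n^s}\frac{\Cos_q\xi_n}{\Sin'_q\xi_n}\right|\leq M\,c^{-\re s}\,q^{2n(\re s-1)}\qquad (n\geq n_1).
\]
Because $0<q<1$, the series $\sum_n q^{2n(\re s-1)}$ is a convergent geometric series precisely when $2(\re s-1)>0$, i.e.\ when $\re s>1$. The comparison test then yields absolute convergence of $\sum_{n=1}^{\infty}\frac{1}{\xi_n^s}\frac{\Cos_q\xi_n}{\Sin'_q\xi_n}$.

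For the second series, the same argument applies verbatim with the second half of Lemma \ref{lem:bounded} providing $\bigl|\Sin_q\eta_n/\Cos'_q\eta_n\bigr|=O(q^{-2n})$ and \eqref{Ass-2} providing $\eta_n\geq c'q^{-2n}$ for large $n$. There is no real obstacle here — both ingredients have already been established — so the proof is essentially a one-line comparison; the only thing to be careful about is invoking the boundedness lemma in the correct form and keeping track that absolute values are used when $s$ is complex.
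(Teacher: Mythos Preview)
Your proposal is correct and is precisely the argument the paper has in mind: the paper simply states that the corollary is ``a direct consequence of Lemma \ref{lem:bounded}'', and your write-up spells out that consequence by combining the $O(q^{-2n})$ bound from Lemma \ref{lem:bounded} with the asymptotics \eqref{Ass}--\eqref{Ass-2} and a geometric-series comparison.
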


 {Our final result of this section will be useful in section 5 where we derive a series representation of a $q$-analog of the Hurwitz zeta function (cf. Theorem \ref{thm:bn}).}
\begin{lem}\label{3:witha} 
For $a>0$ and $s\in\mathbb{C}$, the series 
\be \label{ser}\sum_{k=0}^{\infty}(-1)^k q^{k(k+1)/2}\left(\frac{q^{-k}}{a(1-q)}\right)^{s-1} \frac{E_q\left(-\frac{q^{-k}}{2a(1-q)}\right)}{\text{Sinh}_q(\frac{q^{-k}}{2a(1-q)})}\ee
is absolutely convergent. 

\end{lem}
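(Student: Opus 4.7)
The plan is to bound each summand in absolute value and compare with a super-exponentially decaying tail. Set $x_k := \tfrac{q^{-k}}{2a(1-q)}$; since $a>0$ and $0<q<1$ we have $x_k>0$ for every $k\ge 0$ and $x_k\to\infty$. The $k$-th summand of \eqref{ser} is then $(-1)^k q^{k(k+1)/2}\,(2x_k)^{s-1}\,E_q(-x_k)/\Sinhq(x_k)$.

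The first step is to control the quotient $E_q(-x_k)/\Sinhq(x_k)$. From the definition $\Sinhq(x) = \tfrac12\bigl(E_q(x)-E_q(-x)\bigr)$ one obtains the identity
\[
\frac{E_q(-x)}{\Sinhq(x)} \;=\; \frac{E_q(x)}{\Sinhq(x)} \,-\, 2,
\]
valid whenever $\Sinhq(x)\ne 0$. Lemma \ref{lemma1} asserts that $E_q(x)/\Sinhq(x)$ is bounded on $(0,\infty)$ (in particular $\Sinhq(x)\ne 0$ there), so there exists a constant $M=M(q)>0$ with $|E_q(-x_k)/\Sinhq(x_k)|\le M$ for all $k\ge 0$.

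The second step is to estimate the remaining factors. Writing $\sigma := \re s$, we have
\[
\left|\left(\tfrac{q^{-k}}{a(1-q)}\right)^{s-1}\right| \;=\; \bigl(a(1-q)\bigr)^{1-\sigma}\, q^{-k(\sigma-1)},
\]
so the $k$-th term of \eqref{ser} is bounded in modulus by
\[
M\,\bigl(a(1-q)\bigr)^{1-\sigma}\, q^{k(k+1)/2 \,-\, k(\sigma-1)} \;=\; M\,\bigl(a(1-q)\bigr)^{1-\sigma}\, q^{k(k+3-2\sigma)/2}.
\]

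Finally, since $\sigma$ is fixed and $0<q<1$, the exponent $k(k+3-2\sigma)/2$ grows like $k^2/2$, so $q^{k(k+3-2\sigma)/2}$ decays faster than any geometric sequence and comparison yields absolute convergence of \eqref{ser} for every $a>0$ and every $s\in\mathbb{C}$. The only mildly subtle point is the opening step: one must recognise that Lemma \ref{lemma1}, as stated for $E_q(+x)/\Sinhq(x)$, transfers to $E_q(-x)/\Sinhq(x)$ through the linear identity above. Everything else is elementary bookkeeping of $q$-powers.
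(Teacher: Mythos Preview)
Your proof is correct and follows the same overall strategy as the paper: show that the quotient $E_q(-x_k)/\Sinhq(x_k)$ is uniformly bounded in $k$, and then let the factor $q^{k(k+1)/2}$ absorb the at-most-geometric growth of $\bigl(q^{-k}/(a(1-q))\bigr)^{s-1}$. The one genuine difference is how the bound on the quotient is obtained. The paper expands $E_q(\pm x_k)=(\mp q^{-k}/(2a);q)_\infty$ explicitly via the identity $(cq^{-n};q)_n=(q/c;q)_n(-c/q)^n q^{-\binom{n}{2}}$ and reads off that numerator and denominator have the same order of growth. You instead use the one-line identity $E_q(-x)/\Sinhq(x)=E_q(x)/\Sinhq(x)-2$ to reduce directly to Lemma~\ref{lemma1}, which is cleaner and avoids repeating a computation already done there. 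Your route buys economy and reuse of an existing lemma; the paper's route is more self-contained at this spot. Note only that Lemma~\ref{lemma1} as proved really gives boundedness for $x$ bounded away from $0$ (the ratio blows up as $x\to 0^+$), but this is harmless for you since $x_k\ge x_0=\tfrac{1}{2a(1-q)}>0$ and the function is continuous on $[x_0,\infty)$.
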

\begin{proof}
Using the identity,  \[(cq^{-n};q)_n=(q/c;q)_n \left(-\frac{c}{q}\right)^n q^{-{n\choose 2}},\;\mbox{ for}\; c\neq 0,\; n\in\mathbb{N},\]
we obtain 
\[\begin{split}E_q(\pm \frac{q^{-k}}{2a(1-q)})&=\left(\mp \frac{q^{-k}}{2a};q\right)_{\infty}\\
&=\left(\pm \frac{1}{2a q}\right)^k q^{-{k \choose 2}} \left(\mp 2aq;q\right)_k (\mp 1/{2a};q)_{\infty}.
\end{split}\]
Therefore, there exists $M>0$ such that 
\[\left|\frac{E_q\left(-\frac{q^{-k}}{2a(1-q)}\right)}{\text{Sinh}_q(\frac{q^{-k}}{2a(1-q)})}\right|\leq M, \;\mbox{for all}\;k\in\mathbb{N}.\]
This is a sufficient condition for the series \eqref{ser} to be absolutely convergent.

\end{proof}

\section{$q$-analogs of Riemann's zeta function }

In this section, we show that the two $q$-analogs of the zeta function defined in  \eqref{zetadefinitions} satisfy the identities 

\be\label{IDSec4}
\zeta_q(2n)=(-1)^{n-1}2^{2n-1}\frac{\beta_{2n}(q)}{[2n]!},\;
\zeta_q^*(2n)=(-1)^{n} 2^{2n-2}\frac{\widetilde{E}_{2n-1}(q)}{[2n-1]!}.
\ee

One of the approaches to  investigate   these identities   is the relation   
\be 
\frac{J_{\nu+1}(z)}{J_{\nu}(z)}=2\sum_{n=1}^{\infty}\sigma_{2n}(\nu)z^{2n-1}, 
\ee
which is proved by Kishore~\cite{Kishore}.  Here, 
 $\sigma_{2n}(\nu)$ is the Rayleigh  function defined by 
 \be
\sigma_{2n}(\nu)=\sum_{m=1}^{\infty}(j_{\nu,m})^{-2n},\;\nu>-1, 
\ee
where $n$ is a fixed positive integer  and $(j_{\nu,m})_{m=1}^{\infty}$ is the set of positive zeros of $J_\nu(z)$.  
Clearly, $\sigma_{2n}(1/2)$ is the zeta  function at even integers.

\vskip .5 cm

A $q$-analog of the Rayleigh functions is defined in \cite{Kvit95-a} by
\be\label{RDEF} \frac{J_{\nu+1}^{(2)}(z;q^2)}{J_{\nu}^{(2)}(z;q^2)}=2\sum_{k=0}^{\infty}\sigma_{2n}(\nu;q^2)z^{2n-1}.\ee

\begin{thm}\label{thm:Ray}

 The Rayleigh function  of even orders associated with the zeros of the second Jackson $q$-Bessel function is given by 
\be\label{RLF}
\sigma_{2n}^{(2)}(\nu;q^2)= -\sum_{k=1}^{\infty}\dfrac{J_{\nu+1}^{(2)}(j_{k,\nu}(q^2);q^2)}{{J_{\nu}^{(2)}}'(j_{k,\nu}(q^2);q^2)}(j_{k,\nu}(q^2))^{-2n}\;(\nu>-1).\ee

\end{thm}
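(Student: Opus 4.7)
The plan is to derive the identity via a Mittag--Leffler (partial fractions) expansion of the meromorphic function
\[
f(z):=\frac{J_{\nu+1}^{(2)}(z;q^2)}{J_{\nu}^{(2)}(z;q^2)}
\]
and then match Taylor coefficients with the defining series \eqref{RDEF}. First I would observe that after factoring out $z^\nu$ from numerator and denominator, $f(z)$ is an odd function of $z$, so its poles occur in pairs $\pm j_{k,\nu}(q^2)$; moreover all these poles are simple, since Ismail's result (recalled in Section~2) guarantees that the zeros of $J_{\nu}^{(2)}(\cdot;q^2)$ are simple. A direct residue computation gives
\[
\mathrm{Res}_{z=j_{k,\nu}(q^2)}\,f(z)=\frac{J_{\nu+1}^{(2)}(j_{k,\nu}(q^2);q^2)}{{J_{\nu}^{(2)}}'(j_{k,\nu}(q^2);q^2)}=:A_k,
\]
and the oddness of $f$ yields the same residue $A_k$ at $z=-j_{k,\nu}(q^2)$.

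Next I would establish the global partial fraction representation
\[
f(z)=\sum_{k=1}^{\infty}A_k\!\left(\frac{1}{z-j_{k,\nu}(q^2)}+\frac{1}{z+j_{k,\nu}(q^2)}\right)=\sum_{k=1}^{\infty}\frac{2z\,A_k}{z^2-j_{k,\nu}(q^2)^2},
\]
by applying the residue theorem on circles $|z|=R_n$ chosen (as in Lemma~\ref{lem:2} and Lemma~\ref{lem_3}) to lie in the annular gap between consecutive zeros $j_{n,\nu}(q^2)$ and $j_{n+1,\nu}(q^2)$. The key analytic input is a lower bound on $|J_{\nu}^{(2)}(z;q^2)|$ on these contours of the same type as the bounds on $1/|\mathrm{Sinh}_q z|$ and $1/|\mathrm{Cosh}_q z|$ proved in Section~3, combined with Kvitsinsky's asymptotic $j_{k,\nu}(q^2)\sim 2q^{-\nu-1-2k}$; together these force $R_n\,M(f,R_n)\to 0$, so the integral of $f(z)/(z-w)$ over $|z|=R_n$ vanishes and the Mittag--Leffler series converges uniformly on compact subsets avoiding the zeros.

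Finally, for $|z|$ smaller than the first zero $j_{1,\nu}(q^2)$, I would expand each term as a geometric series,
\[
\frac{2z\,A_k}{z^2-j_{k,\nu}(q^2)^2}=-2\sum_{m=0}^{\infty}\frac{A_k}{j_{k,\nu}(q^2)^{2m+2}}\,z^{2m+1},
\]
interchange summation (justified by the absolute convergence of $\sum_k |A_k|\,j_{k,\nu}(q^2)^{-2n}$ that follows from the same growth estimates and the rapid decay $j_{k,\nu}(q^2)^{-1}\sim \tfrac12 q^{\nu+1+2k}$), and match the coefficient of $z^{2n-1}$ against the definition \eqref{RDEF} of $\sigma_{2n}(\nu;q^2)$. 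Setting $n=m+1$ yields precisely \eqref{RLF}.

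The main obstacle I anticipate is the rigorous justification of the Mittag--Leffler expansion, i.e.\ showing that $R_n\,M(f,R_n)\to 0$ along a suitable sequence of radii. Unlike the classical case, where the ratio $J_{\nu+1}/J_\nu$ is tame away from the zeros, here one must contend with the infinitely many real zeros of $E_q$ lurking near the contours; the annular construction of Lemmas~\ref{lem:2}, \ref{lem_3}, and \ref{lem_3-3} is precisely tailored to avoid them, and I would adapt those arguments (essentially by passing through the product representation of $J_{\nu}^{(2)}$ in terms of its zeros and bounding each factor from below as in the proof of Lemma~\ref{lem:2}) to obtain the required contour estimate.
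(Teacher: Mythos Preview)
Your approach is correct in spirit but differs from the paper's. The paper does not derive the partial-fraction expansion of $J_{\nu+1}^{(2)}/J_{\nu}^{(2)}$ via contour integration; instead it quotes the identity \eqref{Ism:1} directly from Ismail's work on $q$-Lommel polynomials \cite{Ismail-82}, where that expansion arises from the orthogonality measure of the $q$-Lommel system. Once \eqref{Ism:1} is granted, the paper simply expands each term geometrically for $|z|<j_{1,\nu}$ and identifies the coefficients $A_k(\nu+1)$ by the residue computation you also describe. So the paper's argument is much shorter because the hard analytic step---the Mittag--Leffler expansion---is outsourced to the orthogonal-polynomial literature, whereas you propose to redo that step from scratch using the growth estimates of Section~3. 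Your route is more self-contained and fits the analytic toolkit developed elsewhere in the paper, at the cost of having to carry out the product-expansion bounds for general $\nu$ rather than just $\nu=\pm 1/2$. One small caution: the condition you state, $R_n\,M(f,R_n)\to 0$, is not the right one for the Mittag--Leffler argument you sketch (integrating $f(z)/(z-w)$ over $|z|=R_n$ needs $M(f,R_n)\to 0$, and integrating $f(z)/\bigl(z(z-w)\bigr)$ needs only $M(f,R_n)/R_n\to 0$); since $f$ is odd with $f(0)=0$, the latter is what you should aim for, and it is indeed what the product estimates in the style of Lemma~\ref{lem:2} will deliver.
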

\begin{proof}

Ismail \cite{Ismail-82} proved that the $q$-Lommel polynomials $h_{\nu}(x;q)$ associated with the second Jackson $q$-Bessel function are orthogonal with respect to a discrete measure $d\alpha_{\nu}(x;q)$ with jump 
\[d\alpha_{\nu}(x;q)=\dfrac{(1-q^{\nu})A_k(\nu)}{j_{k,\nu-1}^2(q)}\quad\mbox{at}\quad x=\pm \frac{1}{j_{k,\nu-1}(q)}.\]
Moreover, 
\be\label{Ism:1} \sum_{k=0}^{\infty}A_{k}(\nu+1)\frac{z}{j_{k,\nu}^2(q)-z^2}=\frac{J_{\nu+1}^{(2)}(z;q)}{J_{\nu}^{(2)}(z;q)}.\ee

Hence if we assume that $|z|<j_{1,\nu}$ , then    expanding  $1-z^2/j_{k,\nu}^2(q)$  and equating  the coefficients of the  power of $z$  yields 
\[\sigma_{2n}^{(2)}(\nu;q)=\sum_{k=1}^{\infty}\frac{A_k(\nu+1)}{j_{k,\nu}^{2n}(q)}.\]
Also, we can prove by multiplying \eqref{Ism:1} by $z-j_{m,\nu}(q)$ and then calculating the limit as $z\to j_{m,\nu}(q)$ that
\[A_k(\nu)=\dfrac{J_{\nu}^{(2)}(j_{k,\nu-1};q)}{J_{\nu-1}'^{(2)}(j_{k,\nu-1};q)},\;
\sigma_{2n}^{(2)}(\nu;q)=-\sum_{k=1}^{\infty}\frac{1}{j_{k,\nu}^{2n}(q)}\dfrac{J_{\nu+1}^{(2)}(j_{k,\nu};q)}{J_{\nu}'^{(2)}(j_{k,\nu};q)}.\]
This proves \eqref{RLF} and completes the proof. 
\end{proof}
\vskip .5 cm 

Another proof of Theorem \ref{thm:Ray} follows from the identity 

\be \label{Sampling_1}\dfrac{J_{\nu+1}^{(2)}(x;q^2)}{J_{\nu}^{(2)}(x;q^2)}=\sum_{n=1}^{\infty}h_{n,\nu}(q^2) x^{2n-1},\ee
where 
\[h_{n,\nu}(q^2)=\sum_{k=1}^{\infty}-2\dfrac{J_{\nu+1}^{(2)}(j_{k,\nu}(q^2);q^2)}{{J_{\nu}^{(2)}}'(j_{k,\nu}(q^2);q^2)}(j_{k,\nu}(q^2))^{-2n},\]

and  $j_{k,\nu}(q^2)$ are the positive zeros of $J_{\nu}^{(2)}(x;q^2)$. This yields the current theorem at once. 
But it is worth mentioning that the  proof of  the identity \eqref{Sampling_1} is  in \cite{Ann_Ash_Man} and  based on a conjecture that $\{J_{\nu}^{(2)}(qj_{n,\nu}x;q^2)\}_{n=1}^{\infty}$ form a Riesz bases.

\begin{prop}\label{prop:4.2}
For $n\in\mathbb{N}$
\[\begin{split}\sigma_{2n}^{(2)}(1/2;q^2)=q^{-1}
2^{-2n+2}(1-q)^{-2n+1}\sum_{k=1}^{\infty}\dfrac{\text{Cos}_q\; \xi_k}{\text{Sin}'_q(\xi_k)}\frac{1}{\zeta_k^{2n}},\\
\sigma_{2n}^{(2)}(-1/2;q^2)=
-2^{-2n+2}(1-q)^{-2n+1}\sum_{k=1}^{\infty}\dfrac{\text{Sin}_q\; \eta_k}{\text{Cos}'_q(\eta_k)}\frac{1}{\eta_k^{2n}}.
\end{split}\]

\end{prop}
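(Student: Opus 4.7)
The plan is to apply Theorem \ref{thm:Ray} at the two half-integer parameters $\nu = 1/2$ and $\nu = -1/2$, and then substitute the explicit expressions of the relevant second Jackson $q$-Bessel functions in terms of the $q$-trigonometric functions.  The key inputs for the first identity are both already available in \eqref{ID00}: the formulas for $J_{1/2}^{(2)}(x;q^2)$ and $J_{3/2}^{(2)}(x;q^2)$, together with the observation that the positive zeros of $J_{1/2}^{(2)}(\cdot;q^2)$ are exactly $j_{k,1/2}(q^2) = 2(1-q)\xi_k$, since $\Sin_q(x/(2(1-q)))$ vanishes precisely at $x = 2(1-q)\xi_k$.

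For the first identity, I would evaluate the summand in \eqref{RLF} at $x=2(1-q)\xi_k$ as follows.  The equation $\Sin_q(\xi_k)=0$ kills the first term in the bracket defining $J_{3/2}^{(2)}$, leaving $J_{3/2}^{(2)}(2(1-q)\xi_k;q^2) = -\frac{q^{-1}}{\Gamma_{q^2}(1/2)}\bigl(\tfrac{2}{x(1-q^2)}\bigr)^{1/2}\Cos_q(\xi_k)$.  Differentiating $J_{1/2}^{(2)}$ at the zero and using the chain rule (the prefactor $x^{-1/2}$ contributes nothing because $\Sin_q(\xi_k)=0$) gives ${J_{1/2}^{(2)}}'(2(1-q)\xi_k;q^2) = \frac{1}{\Gamma_{q^2}(1/2)}\bigl(\tfrac{2}{x(1-q^2)}\bigr)^{1/2}\cdot\frac{1}{2(1-q)}\Sin'_q(\xi_k)$.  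The common prefactor cancels in the quotient; multiplying by $(2(1-q)\xi_k)^{-2n}$ and the outer minus sign in \eqref{RLF} then collects the factors as $q^{-1}\,2^{-2n+2}(1-q)^{-2n+1}\,\Cos_q\xi_k/(\Sin'_q\xi_k \,\xi_k^{2n})$, which is exactly the stated formula.

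For the second identity, the same strategy applies with $\nu = -1/2$: the Rayleigh formula involves $J_{1/2}^{(2)}$ in the numerator and ${J_{-1/2}^{(2)}}'$ in the denominator.  The zeros of $J_{-1/2}^{(2)}(\cdot;q^2)$ are $j_{k,-1/2}(q^2) = 2(1-q)\eta_k$, and evaluating $J_{1/2}^{(2)}$ there produces the factor $\Sin_q(\eta_k)$, while $\Cos_q(\eta_k)=0$ cleans up the computation of the derivative of $J_{-1/2}^{(2)}$ into a single $\Cos'_q(\eta_k)$ term.  After cancellation of the common $x^{-1/2}$ prefactor and collection of the powers of $2(1-q)$, one obtains the claim.

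The main obstacle is that \eqref{ID00} records explicit expressions only for $\nu = 1/2, 3/2$, while the $\nu = -1/2$ case requires the analogous identity $J_{-1/2}^{(2)}(x;q^2) = \frac{c(q)}{\Gamma_{q^2}(1/2)}\bigl(\tfrac{2}{x(1-q^2)}\bigr)^{1/2}\Cos_q(x/(2(1-q)))$ for an explicit constant $c(q)$.  This can be established either by direct inspection of the power series defining $J_\nu^{(2)}$ at $\nu=-1/2$ or, more efficiently, by applying the standard three-term contiguous relation for $J_\nu^{(2)}$ (as found in \cite{GR}) to the pair \eqref{ID00}; pinning down $c(q)$ is the only place where care is needed in order to match the overall prefactor $-2^{-2n+2}(1-q)^{-2n+1}$ predicted by the proposition.
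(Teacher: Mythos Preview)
Your approach is essentially identical to the paper's: it too invokes Theorem~\ref{thm:Ray}, uses \eqref{ID00} to compute the quotient $J_{3/2}^{(2)}(j_{k,1/2};q^2)/{J_{1/2}^{(2)}}'(j_{k,1/2};q^2)$ (this is exactly \eqref{Eq:10}), and then substitutes $j_{k,1/2}(q^2)=2(1-q)\xi_k$ into \eqref{RLF}. For the second identity the paper merely writes ``similarly'', so your observation that one must supply the companion formula for $J_{-1/2}^{(2)}(x;q^2)$ in terms of $\Cos_q$ is precisely the detail the paper omits.
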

\begin{proof}
Using the identities,
\be\begin{split}\label{ID00}
	J_{1/2}^{(2)}(x;q^2)&= \frac{1}{\Gamma_{q^2}(1/2)} (\frac{2}{x(1-q^2)})^{1/2} \text{Sin}_q(\frac{x}{2(1-q)}),\\
  J_{3/2}^{(2)}(x;q^2)&=\frac{q^{-1}}{\Gamma_{q^2}(1/2)} (\frac{2}{x(1-q^2)})^{1/2}\left[\frac{2(1-q)}{x}\text{Sin}_q(\frac{x}{2(1-q)})-\text{Cos}_q( \frac{x}{2(1-q)})\right],
  \end{split}\ee
we can prove that 
\be \label{Eq:10}\dfrac{J_{3/2}^{(2)}(j_{k,1/2};q^2)}{{J_{1/2}^{(2)}}'(j_{k,1/2};q^2)}=-2 q^{-1}(1-q)\frac{\text{Cos}_q\; \xi_k}{\text{Sin}_q'(\xi_k)},\ee
where we used that if $j_{k,1/2}$ is the $k$th positive zero of $J_{1/2}(z;q^2)$, then $\xi_k=\frac{j_{k,1/2}(1-q)}{2}$. Consequently
 the result follows by substituting \eqref{Eq:10} into \eqref{RLF}. 
Similarly, we can prove the second identity  for $\sigma_{2n}(-1/2)$.

\end{proof}
\begin{prop}\label{prop:1} For $|x|<2\xi_1$.
\bea\label{R:2}  \frac{x}{1-q}\dfrac{\text{Cosh}_q \frac{x}{2(1-q)}}{\text{Sinh}_q\frac{x}{2(1-q)}}
&=&2-\frac{q}{1-q}\sum_{n=1}^{\infty}(-1)^n x^{2n}\sigma_{2n}^{(2)}(1/2;q),\\
\text{Tan}_q\frac{x}{2(1-q)}&=&\sum_{n=1}^{\infty}\sigma_{2n}^{(2)}(-1/2;q)(q^2) x^{2n-1}.
\eea
\end{prop}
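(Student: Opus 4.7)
The plan is to derive both identities from the Rayleigh generating function \eqref{RDEF} specialized to the two half-integer parameters $\nu=\pm 1/2$, making essential use of the explicit realizations \eqref{ID00} of $J_{1/2}^{(2)}$, $J_{3/2}^{(2)}$ in terms of $\text{Sin}_q$ and $\text{Cos}_q$.

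For the first identity, I would apply \eqref{RDEF} with $\nu=1/2$ (and parameter $q^2$). The ratio on the left-hand side can be simplified via \eqref{ID00}: the square-root prefactors cancel cleanly and one is left with
\[
\frac{J_{3/2}^{(2)}(x;q^2)}{J_{1/2}^{(2)}(x;q^2)} \;=\; q^{-1}\left[\frac{2(1-q)}{x}-\frac{\text{Cos}_q\!\left(\frac{x}{2(1-q)}\right)}{\text{Sin}_q\!\left(\frac{x}{2(1-q)}\right)}\right].
\]
Setting this equal to $2\sum_{n\ge 1}\sigma_{2n}^{(2)}(1/2;q^2)x^{2n-1}$, multiplying by $x/(1-q)$, and isolating the quotient produces the identity in its trigonometric form. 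The stated hyperbolic form is then obtained by the substitution $x\mapsto ix$: from the series in \eqref{S-C-def} one has $\text{Sin}_q(iz)=i\,\text{Sinh}_q z$ and $\text{Cos}_q(iz)=\text{Cosh}_q z$, so the factors of $i$ cancel on the left, while on the right $(ix)^{2n}=(-1)^n x^{2n}$ yields the alternating signs in the claimed series.

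For the second identity, the same strategy applies with $\nu=-1/2$, namely
\[
\frac{J_{1/2}^{(2)}(x;q^2)}{J_{-1/2}^{(2)}(x;q^2)} \;=\; 2\sum_{n=1}^{\infty}\sigma_{2n}^{(2)}(-1/2;q^2)\,x^{2n-1}.
\]
The only input not already in the excerpt is the half-integer formula
\[
J_{-1/2}^{(2)}(x;q^2)\;=\;\frac{1}{\Gamma_{q^2}(1/2)}\left(\frac{2}{x(1-q^2)}\right)^{1/2}\text{Cos}_q\!\left(\frac{x}{2(1-q)}\right),
\]
which follows from the series definition of the second Jackson $q$-Bessel function in exactly the same way as the $J_{1/2}^{(2)}$ formula in \eqref{ID00}. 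Together with the first line of \eqref{ID00}, the $\nu=-1/2$ ratio collapses to $\text{Tan}_q(\frac{x}{2(1-q)})$, and the claim follows up to the constant normalization built into \eqref{RDEF}.

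The analytic region is dictated by \eqref{RDEF}, which represents the ratio of $q$-Bessel functions as a Taylor series converging up to the first positive zero of the denominator; under the change of variable $y=\frac{x}{2(1-q)}$ inside $\text{Sin}_q$ (respectively $\text{Cos}_q$), these zeros are $2(1-q)\xi_1$ (respectively $2(1-q)\eta_1$), producing the stated disc $|x|<2\xi_1$ in the appropriate scaling. The main technical obstacle I anticipate is the careful bookkeeping of the $q$-dependent constants and the factor of $q^{-1}$ coming from \eqref{ID00} while switching between the $q$ and $q^2$ conventions; once those are tracked, everything else is a routine comparison of power series coefficients.
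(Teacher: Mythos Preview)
Your proposal is correct and follows essentially the same approach as the paper: specialize the Rayleigh generating relation \eqref{RDEF} at $\nu=\pm 1/2$ and rewrite the Bessel ratio via \eqref{ID00}. The only cosmetic difference is that the paper substitutes $z=ix$ into \eqref{RDEF} first and then applies \eqref{ID00}, whereas you simplify in trigonometric form first and pass to the hyperbolic form afterwards; the paper also leaves the $\nu=-1/2$ case (and the $J_{-1/2}^{(2)}$ formula you supply) implicit.
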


\begin{proof}
Set $\nu=1/2$  in \eqref{RDEF} gives 
\be \label{Eq:R2} 
\dfrac{J_{3/2}^{(2)}(ix;q^2)}{J_{1/2}^{(2)}(ix;q^2)}=\sum_{n=1}^{\infty}\sigma_{2n}^{(2)}(1/2;q)(ix)^{2n-1}.
\ee
Then \eqref{R:2} follows by substituting  with \eqref{ID00} onto the left hand side of \eqref{Eq:R2}.
\end{proof}
The following result  gives a $q$-analog of the known identities 

\be \sigma_{2n}(1/2)=(-1)^{n-1}\frac{2^{2n-1}}{[2n]}\beta_{2n},\;
\sigma_{2n}(-1/2)= (-1)^n\frac{2^{2n-2}}{2n!}G_{2n},\ee
where $\beta_{n}$, and $G_n$ are the classical Bernoulli and Genocchi numbers, see~\cite{Kishore}.   

\begin{prop}\label{prop:4.4}
For $n\in\mathbb{N}$,
\bea \label{Eq:S1}\sigma_{2n}^{(2)}(1/2;q^2)&=&2(-1)^{n-1}q^{-1}(1-q)^{-2n+1} \frac{\beta_{2n}(q)}{[2n]!},\\
 \label{Eq:S2}\sigma_{2n}^{(2)}(-1/2;q^2)&=&(-1)^{n} (1-q)^{-2n+1}\frac{G_{2n}(q)}{[2n]!}.\eea
\end{prop}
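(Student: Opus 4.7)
The plan is to compare the generating function identities of Proposition \ref{prop:1} with the generating functions of the $q$-Bernoulli and $q$-Genocchi numbers, which differ only in that the denominator $\mathcal{E}_q(t)\pm 1$ appears on each side. The key reduction is the identity
\[
e_q(u)\,E_q(-u) \;=\; \frac{(u(1-q);q)_\infty}{(u(1-q);q)_\infty}\;=\;1,
\]
together with $e_q(u)\,E_q(u)=\mathcal{E}_q(2u)$, which allow one to rewrite the $q$-hyperbolic cotangent and the $q$-tangent in a form adapted to the Cie\'{s}li\'{n}ski exponential $\mathcal{E}_q$.

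For \eqref{Eq:S1}, I would first compute, using $E_q(\pm u)=\text{Cosh}_q u\pm \text{Sinh}_q u$,
\[
\frac{\text{Cosh}_q u}{\text{Sinh}_q u}\;=\;\frac{E_q(u)+E_q(-u)}{E_q(u)-E_q(-u)}
\;=\;\frac{e_q(u)E_q(u)+e_q(u)E_q(-u)}{e_q(u)E_q(u)-e_q(u)E_q(-u)}
\;=\;\frac{\mathcal{E}_q(2u)+1}{\mathcal{E}_q(2u)-1}\;=\;1+\frac{2}{\mathcal{E}_q(2u)-1}.
\]
Setting $t=2u=x/(1-q)$, multiplying by $t$, and invoking the generating function \eqref{GF} at $x=0$, together with the fact that $\beta_0=1$, $\beta_1=-\tfrac12$ and $\beta_{2k+1}(q)=0$ for $k\geq 1$, one finds
\[
\frac{x}{1-q}\,\frac{\text{Cosh}_q u}{\text{Sinh}_q u}\;=\;2+2\sum_{k=1}^{\infty}\beta_{2k}(q)\,\frac{x^{2k}}{(1-q)^{2k}\,[2k]!}.
\]
Matching this with the expansion in \eqref{R:2} and solving for $\sigma_{2n}^{(2)}(1/2;q^2)$ gives \eqref{Eq:S1}.

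For \eqref{Eq:S2}, the same trick with $u\mapsto iu$ yields
\[
\text{Tan}_q u\;=\;\frac{1}{i}\,\frac{\mathcal{E}_q(2iu)-1}{\mathcal{E}_q(2iu)+1}\;=\;\frac{1}{i}\left(1-\frac{2}{\mathcal{E}_q(2iu)+1}\right).
\]
Substituting the generating function $\frac{2t}{\mathcal{E}_q(t)+1}=\sum_{n\geq 0}G_n(q)\,t^n/[n]!$ with $t=2iu=ix/(1-q)$, and using $G_0=0$, $G_1=1$, and $G_{2k+1}(q)=[2k+1]\,\widetilde{E}_{2k}(q)=0$ for $k\geq 1$, the odd powers of $x$ collapse and leave
\[
\text{Tan}_q u\;=\;\sum_{k=1}^{\infty}(-1)^k\,G_{2k}(q)\,\frac{x^{2k-1}}{(1-q)^{2k-1}[2k]!}.
\]
Comparing coefficients with the $\text{Tan}_q$ expansion in Proposition \ref{prop:1} yields \eqref{Eq:S2}.

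There is no serious analytic obstacle: both identities in Proposition \ref{prop:1} are valid for $|x|<2\xi_1$, which gives a common disk of analyticity large enough to legitimately equate Taylor coefficients. The only delicate point is bookkeeping, namely tracking the powers of $i$ in the tangent case (where the factor $(-1)^k$ ultimately arises from $i^{2k-2}$) and correctly eliminating the constant and linear terms that come from $\beta_0,\beta_1$ (respectively $G_0,G_1$) so that the identification with the Rayleigh series, which starts at $k=1$ with no constant term, is clean.
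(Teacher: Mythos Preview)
Your proposal is correct and follows essentially the same route as the paper: both compare the Rayleigh expansions in Proposition~\ref{prop:1} with the generating-function expansions of $\beta_{2n}(q)$ and $G_{2n}(q)$ and then equate coefficients. The only cosmetic difference is that the paper quotes the identities $x\,\text{Cosh}_q(x/2)/\text{Sinh}_q(x/2)=2\sum_{n\ge0}\beta_{2n}(q)x^{2n}/[2n]!$ and the analogous $\text{Tan}_q$ expansion directly from \cite{Ism-Man-2019}, whereas you re-derive them via $e_q(u)E_q(-u)=1$ and $\mathcal{E}_q(2u)=e_q(u)E_q(u)$.
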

\begin{proof}
Using the identity, see~\cite{Ism-Man-2019}, 

\be\label{Eq:11}
x\dfrac{\text{Cosh}_q x/2}{\text{Sinh}_q x/2}=2\sum_{n=0}^{\infty} \beta_{2n}(q)\frac{(x)^{2n}}{[2n]!},\; |x|<2\xi_1, 
\ee
and \eqref{R:2}, we conclude that 
\be\label{Eq:12}
2\sum_{n=0}^{\infty} \beta_{2n}(q)\frac{(x)^{2n}}{(q;q)_{2n}}=2-\frac{q}{1-q}\sum_{n=1}^{\infty}(-1)^n x^{2n}\sigma_{2n}^{(2)}(1/2;q),  |x|<2\xi_1.
\ee
Then equating the coefficients of $x^{2n}$ in the two sides of \eqref{Eq:12} yields the result. 

The proof of \eqref{Eq:S2} follows from the identity, see ~\cite{Ism-Man-2019}, 
\[\text{Tan}_q\left(\frac{x}{2(1-q)}\right)=\sum_{n=1}^{\infty}(-1)^n 2^{2n-1}
\widetilde{E}_{2n-1}(q)\frac{x^{2n-1}}{(q;q)_{2n-1}},\]
and 
 the fact that $G_n(q)=[n]\widetilde{E}_{n-1}(q).$

\end{proof}

\begin{cor}
For $n\in\mathbb{N}$,
\be \begin{split}\label{z1n}
\zeta_q(2n)&=(-1)^{n-1}2^{2n-1}\frac{\beta_{2n}(q)}{[2n]!},\\
\zeta_q^{*}(2n)&=(-1)^{n}2^{2n-1} \frac{\widetilde{E}_{2n-1}(q)}{[2n-1]!}.
\end{split}\ee
\end{cor}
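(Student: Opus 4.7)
The corollary follows by combining Proposition \ref{prop:4.2}, which expresses $\sigma_{2n}^{(2)}(\pm 1/2;q^2)$ as series of the precise form defining $\zeta_q(2n)$ and $\zeta_q^*(2n)$, with Proposition \ref{prop:4.4}, which independently evaluates those same Rayleigh values in terms of $\beta_{2n}(q)$ and $G_{2n}(q)$. So the plan is simply to set the two expressions for $\sigma_{2n}^{(2)}(1/2;q^2)$ equal and solve for $\zeta_q(2n)$, and analogously for the $-1/2$ case and $\zeta_q^*(2n)$.

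For the first identity, I would note that by the definition in \eqref{zetadefinitions},
\[
\sum_{k=1}^{\infty}\frac{\Cos_q \xi_k}{\Sin_q'\xi_k}\frac{1}{\xi_k^{2n}}=\zeta_q(2n),
\]
so Proposition \ref{prop:4.2} gives $\sigma_{2n}^{(2)}(1/2;q^2)=q^{-1}2^{-2n+2}(1-q)^{-2n+1}\zeta_q(2n)$, while Proposition \ref{prop:4.4} independently gives $\sigma_{2n}^{(2)}(1/2;q^2)=2(-1)^{n-1}q^{-1}(1-q)^{-2n+1}\beta_{2n}(q)/[2n]!$. Equating and canceling the common factor $q^{-1}(1-q)^{-2n+1}$ yields $\zeta_q(2n)=(-1)^{n-1}2^{2n-1}\beta_{2n}(q)/[2n]!$.

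For the second identity, observe that the sign in the definition of $\zeta_q^*(s)$ exactly cancels the one in Proposition \ref{prop:4.2}, so
\[
\sigma_{2n}^{(2)}(-1/2;q^2)=2^{-2n+2}(1-q)^{-2n+1}\zeta_q^{*}(2n).
\]
Setting this equal to the expression $(-1)^n(1-q)^{-2n+1}G_{2n}(q)/[2n]!$ from Proposition \ref{prop:4.4} and applying the relation $G_{2n}(q)=[2n]\widetilde{E}_{2n-1}(q)$ (so that $G_{2n}(q)/[2n]!=\widetilde{E}_{2n-1}(q)/[2n-1]!$) produces the stated formula for $\zeta_q^{*}(2n)$.

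The whole argument is essentially bookkeeping of constants and signs: there is no substantive obstacle at this stage since the analytic work (convergence of the series, Rayleigh expansions, and the closed-form evaluation via the $q$-Bernoulli and $q$-Genocchi generating functions) has already been carried out in Corollary \ref{Cor:B}, Theorem \ref{thm:Ray}, and Propositions \ref{prop:4.2}, \ref{prop:1}, and \ref{prop:4.4}. The only mild care needed is to track the sign coming from the definition of $\zeta_q^*$ and to translate Genocchi numbers to Euler numbers via $G_n(q)=[n]\widetilde{E}_{n-1}(q)$.
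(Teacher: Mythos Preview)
Your proposal is correct and follows exactly the paper's own approach, which simply cites Propositions~\ref{prop:4.2} and~\ref{prop:4.4}. (Carrying out your arithmetic in the second case actually yields the factor $2^{2n-2}$, consistent with \eqref{IDSec4}; the $2^{2n-1}$ in the displayed corollary appears to be a typo in the statement, not a flaw in your argument.)
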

\begin{proof}
The proof follows directly from Propositions  \ref{prop:4.2}, \ref{prop:4.4}.
\end{proof}

\section{A $q$-Dirichlet eta  function associated with a $q$-analog for the Bernoulli polynomials}
\label{Sec:Dir-Bernoulli}

In this section, we use contour integration and the generating function for the Bernoulli polynomials  to define a $q$-analog  of the Dirichlet eta function.  

\vskip .5 cm 
\begin{defn}
For $a>0$  and $s\in\mathbb{C}$,  let   $H_q(s,a)$ be the function defined by
\be\label{Hdef}
H_q(s,a)=\frac{1}{2\pi\,i}\int_{\Gamma}z^{s-1}\dfrac{e_q(az)}{\E(z)-1}\,dz,
\ee
where $\E(z)$ is the function defined in \eqref{ne} and the
 contour $\Gamma$ is a loop around the negative real axis, as shown in Figure \ref{Fig-1}. The loop is composed of three parts $C_1$, $C_2$, $C_3$.  The negatively oriented  circle $C_2$ is of radius $c < \min\{2\xi_1,\frac{1}{a(1-q)}\}$  and centered at  the origin, and $C_1$, $C_3$ are the lower and upper edges of a cut  in the $z$-plane along the negative real axis, traversed as shown in Figure \ref{Fig-1}.

\end{defn}

\begin{figure}[htbp]
    \begin{center} 
        \includegraphics[width=.6\textwidth ]{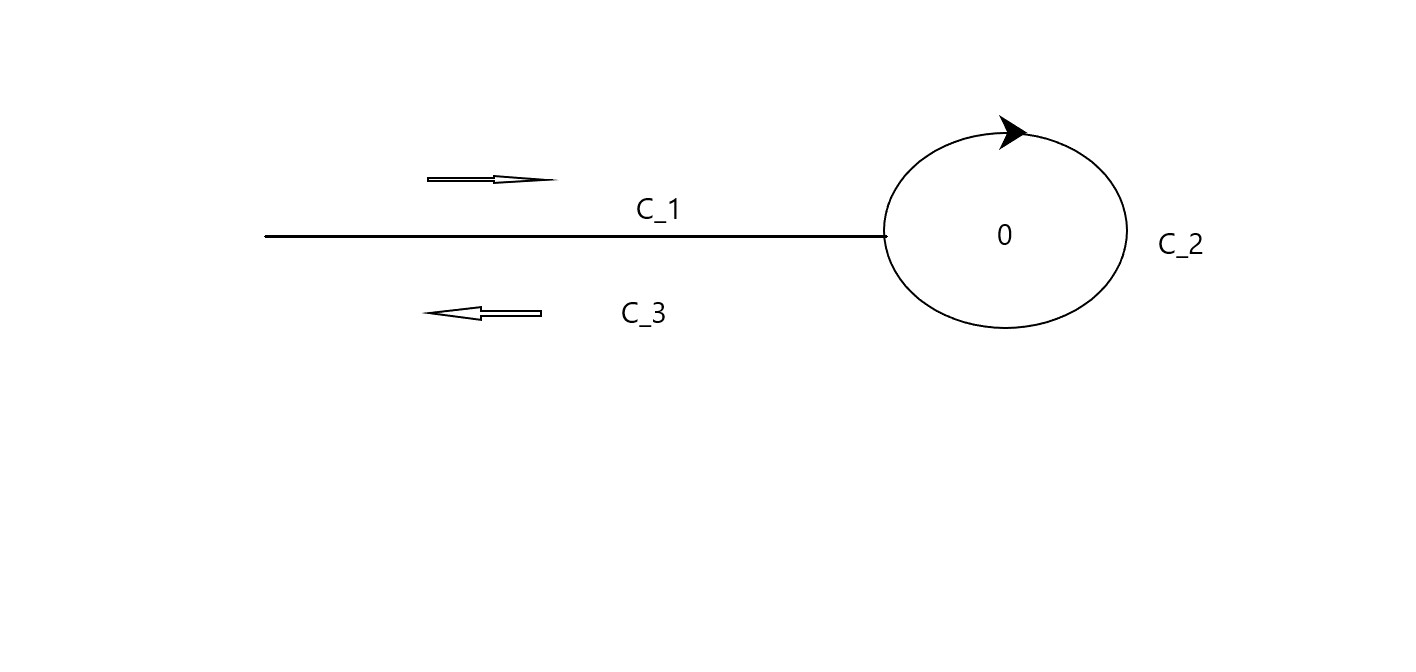} 
				\end{center}
        \caption{The contour $\Gamma$}
				\label{Fig-1}
 \end{figure}
 
\begin{thm}\label{Thm:main}
For $a>0$, the function $H_q(s,a)$ is an entire function of $s$. Moreover, 
for $\sigma:=\re s>1$,
\be \label{Eq:2-IZ}H_q(s,a)=\frac{\sin \pi  s}{\pi}\int_{0}^{\infty}r^{s-1}\dfrac{e_q(-ar)}{1-\E(-r)}\,dr,\ee
and for $n\in \mathbb{Z}$
\be\label{EQ:integer} H_q(n,a)=\frac{1}{2\pi i
}\int_{C_2}z^{n-1}\frac{e_q(az)}{\E(z)-1}\,dz.\ee

\end{thm}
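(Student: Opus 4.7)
The theorem is a $q$-analog of the classical Hankel-contour representation of the Riemann zeta function, so the plan is to follow the classical three-step argument: (i) justify that the contour integral is well-defined and deformable, (ii) derive \eqref{Eq:2-IZ} for $\re s>1$ by collapsing the loop onto the cut and sending $c\to 0$, and (iii) deduce \eqref{EQ:integer} at integer $s$ from the single-valuedness of $z^{n-1}$. Entirety in $s$ will follow from uniform convergence of the integral on compact subsets of the $s$-plane.

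For (i) and entirety: the integrand $z^{s-1}\frac{e_q(az)}{\E(z)-1}$ is entire in $s$ for each fixed $z\in\Gamma$. The choice $c<\min\{2\xi_1, 1/(a(1-q))\}$ ensures that the poles of $e_q(az)$ (on the positive real axis) and the nonzero roots of $\E(z)-1$ (whose smallest modulus is $2\xi_1$, as these correspond to the first poles of the Bernoulli generating function) all lie outside $C_2$. Along the edges $C_1,C_3$, one writes $z=re^{\mp i\pi}$ and uses the super-polynomial decay of $e_q(-ar)=1/(-ar(1-q);q)_\infty$ as $r\to\infty$, which dominates the polynomial factor $r^{\re s-1}$ uniformly on compact $s$-sets; control of $1/(\E(-r)-1)$ between its zeros is handled by the asymptotic estimates of Section~3 (cf.\ Lemma~\ref{lemma1}). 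Independence of the choice of $c$ then follows from Cauchy's theorem, since in the annulus between two admissible circles the integrand is holomorphic.

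For (ii): parametrize $z=re^{\pm i\pi}$ along $C_3$ and $C_1$, so that $z^{s-1}=r^{s-1}e^{\pm i\pi(s-1)}$. With the orientation of $\Gamma$ fixed by Figure~\ref{Fig-1}, the combined edge contribution is
\[
\frac{e^{i\pi(s-1)}-e^{-i\pi(s-1)}}{2\pi i}\int_c^\infty r^{s-1}\frac{e_q(-ar)}{\E(-r)-1}\,dr
=\frac{\sin\pi s}{\pi}\int_c^\infty r^{s-1}\frac{e_q(-ar)}{1-\E(-r)}\,dr,
\]
using $\sin(\pi(s-1))=-\sin\pi s$. For $\re s>1$, the $C_2$ contribution vanishes as $c\to 0$: since $\E'(0)=1$, we have $\E(z)-1=z(1+O(z))$ near the origin, so the integrand is $O(|z|^{\re s-2})$ on $C_2$, and the length estimate yields a bound $O(c^{\re s-1})\to 0$. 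This establishes \eqref{Eq:2-IZ}. For (iii), when $s=n\in\mathbb{Z}$ the factor $z^{n-1}$ is single-valued, so the integrand extends holomorphically across the negative real axis (away from the origin). Then $C_1$ and $C_3$ trace the same curve in opposite senses and cancel, leaving only $\int_{C_2}$, which yields \eqref{EQ:integer}.

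The main technical obstacle is the step in (i) concerning the behaviour of $1/(\E(-r)-1)$ along the edges: because $E_q(-r/2)$ has infinitely many real zeros, $\E(-r)$ oscillates and vanishes at a discrete sequence of points on $(0,\infty)$, so the edges must be chosen just off the axis to avoid the isolated points where $\E(z)=1$; invoking Lemma~\ref{lemma1} and the growth estimates of Lemma~\ref{lem:2} allows one to bound $1/(\E(z)-1)$ on such perturbed edges by a quantity that the super-polynomial decay of $e_q(az)$ still dominates uniformly on compact $s$-sets.
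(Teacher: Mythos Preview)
Your three-step plan and the execution of (ii) and (iii) match the paper's proof essentially verbatim, and your instinct in (i) to invoke Lemma~\ref{lemma1} for the edge estimate is exactly right.

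The ``technical obstacle'' you raise in the final paragraph, however, rests on a misconception: $1-\E(-r)$ does \emph{not} vanish for any $r>0$. Using $e_q(w)E_q(-w)=1$ one has
\[
1-\E(-r)\;=\;1-\frac{E_q(-r/2)}{E_q(r/2)}\;=\;\frac{2\,\Sinhq(r/2)}{E_q(r/2)},
\]
and for $r>0$ both $E_q(r/2)=(-r(1-q)/2;q)_\infty$ and $\Sinhq(r/2)=\tfrac{r}{2}\prod_{k\ge1}\bigl(1+r^2/(4\xi_k^2)\bigr)$ are strictly positive. (The real zeros of $E_q(-r/2)$ that you noticed make $\E(-r)$ vanish, not $\E(-r)-1$.) The paper exploits precisely this identity: on $C_1$ and $C_3$ the integrand is rewritten as
\[
\frac{r^{s-1}}{E_q(ar)}\cdot\frac{E_q(r/2)}{2\,\Sinhq(r/2)},
\]
the second factor is bounded by Lemma~\ref{lemma1}, and $\int_1^\infty r^{M-1}/E_q(ar)\,dr<\infty$ furnishes uniform convergence on $|s|\le M$. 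No perturbation of the edges off the real axis is needed, and Lemma~\ref{lem:2} (which concerns $1/|\Sinhq z|$ on circles of special radii and is used only for Theorem~\ref{thm:bn}) plays no role in this argument.
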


\medskip

\begin{proof}

We consider an arbitrary  compact disk $|s|\leq M$ and prove that the integrals along $C_1$ and $C_3$ converge uniformly on every such disk. This approach  will prove that $H_q(s,a)$ is an entire function since the integrand is an entire function of $s$.

Along $C_1$,
we have for $r\geq 1$
\[|z^{s-1}|=|(re^{i\pi})^{s-1}|=r^{\sigma-1}e^{-\pi \tau}\leq r^{M-1} e^{\pi M},\]
on $C_3$,
\[|z^{s-1}|=|r^{s-1}e^{-\pi\,i(s-1)}|=r^{\sigma-1}e^{\pi\tau}\leq r^{M-1} e^{\pi M}.\]
Hence on either $C_1$ or $C_3$ we have for $r\geq 1$
\[
\left|z^{s-1}\frac{e_q(az)}{\E(z)-1}\right|\leq r^{M-1}e^{\pi M}\frac{e_q(-ar)}{1-\E(-r)}.
\]
We study the convergence of the limit 
\[\lim_{u\to\infty}\int_1^{u}r^{M-1}e^{\pi M}\frac{e_q(-ar)}{1-\E(-r)}\,dr=\\\lim_{u\to\infty}\int_1^{u}r^{M-1}e^{\pi M}\frac{1}{ E_q(ar )}\frac{E_q(r/2)}{ 2\,\text{Sinh}_q r/2}\,dr.\]
Since  the function $E_q(r/2)/\text{Sinh}_qr/2$ is a bounded function, see Lemma \ref{lemma1}. Moreover,  since, for any $M>0$ and $a>0$, the integral $\int_{1}^{\infty}\frac{r^{M-1}}{E_q(ar)}\,dr <\infty$, then the integrals along $C_1$ and $C_3$ converges  
uniformly on any compact disk $|s|\leq M$ and hence $H_q(s,a)$ is an entire function of $s$.
To prove \eqref{Eq:2-IZ}, note that 
\[\int_{C_1}\dfrac{z^{s-1}e_q(az)}{\E(z)-1}\, dz=-e^{i\pi(s-1)}\int_{c}^{\infty}r^{s-1}\frac{e_q(-ar)}{1-\E(-r)}\,dr,\]

\[\int_{C_3}\dfrac{z^{s-1}e_q(-az)}{\E(z)-1}\, dz=e^{-i\pi\,(s-1)}\int_{c}^{\infty}r^{s-1}\frac{e_q(-ar)}{1-\E(-r)}\,dr,\]
and 

\[\int_{C_2}\dfrac{z^{s-1}e_q(az)}{\E(z)-1}\, dz=-2\pi\,i Res(g,0)=0,\mbox{if}, \re s>1,\]
where $g(z)=\dfrac{e_q(az)}{\E(z)-1}$.
Thus  for $\re s>1$, 
\[H_q(s,a)=\frac{\sin \pi\,s}{\pi
}\int_{c}^{\infty}r^{s-1}\frac{e_q(-ar)}{1-\E(-r)}\,dr, \;c<2\xi_1.\]
 Hence, taking the limit as $c\to 0$ , we obtain 

\[H_q(s,a)=\frac{\sin \pi s}{\pi}\int_0^{\infty} r^{s-1}\frac{e_q(-ar)}{1-\E(-r)}\,dr,\quad \re s>1.\]

  If $s=n$  is an integer,  $ n\in\mathbb{N}$, then the integrals on $C_1$ and $C_3$ cancel each other and \eqref{EQ:integer} follows. This completes the proof of the theorem.
\end{proof}

\begin{prop}\label{cor:1}
For $n\in\mathbb{Z}$
\[H_q(n,a)=\left\{\begin{array}{cc}0, &n\geq 2,\\
-1, &n=1,\\
-\frac{b_{-n+1}(a;q)}{[-n+1]!},&n=0,-1,-2,\ldots.
\end{array}\right.\]
\end{prop}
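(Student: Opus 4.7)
The plan is to reduce everything to a residue computation at the origin by exploiting the second assertion of Theorem~\ref{Thm:main}. For integer $n$, that assertion gives
\[
H_q(n,a)=\frac{1}{2\pi i}\int_{C_2}z^{n-1}\frac{e_q(az)}{\E(z)-1}\,dz,
\]
and since $C_2$ is the negatively oriented circle of radius $c<\min\{2\xi_1,1/(a(1-q))\}$ centered at $0$, this integral equals $-\operatorname{Res}_{z=0}\bigl(z^{n-1}e_q(az)/(\E(z)-1)\bigr)$. So the task reduces to identifying a single Laurent coefficient.

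The next step is to read the Laurent expansion of $e_q(az)/(\E(z)-1)$ directly from the generating function \eqref{GF}. Dividing the identity defining $b_n(x;q)$ by $t$ and setting $x=a$, $t=z$, I obtain
\[
\frac{e_q(az)}{\E(z)-1}=\sum_{k=0}^{\infty}b_k(a;q)\frac{z^{k-1}}{[k]!},
\]
valid in a punctured neighborhood of $0$; the radius of convergence is at least $\min\{2\xi_1,1/(a(1-q))\}$ because the only singularities of the left-hand side nearer to the origin would be either a nontrivial zero of $\E(z)-1$ (which, from \eqref{Eq:11}, has modulus at least $2\xi_1$) or a pole of $e_q(az)$ (whose nearest is at $z=1/(a(1-q))$). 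Hence the expansion is legitimate on $C_2$.

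Multiplying by $z^{n-1}$ gives $z^{n-1}e_q(az)/(\E(z)-1)=\sum_{k=0}^{\infty}b_k(a;q)z^{n+k-2}/[k]!$, and the residue at $0$ is the coefficient of $z^{-1}$, corresponding to $k=1-n$. A case analysis finishes the proof: for $n\ge 2$ the required index $k=1-n$ is negative, so the residue is $0$ and $H_q(n,a)=0$; for $n=1$ the residue is $b_0(a;q)/[0]!=1$, giving $H_q(1,a)=-1$; and for $n\le 0$ the residue is $b_{1-n}(a;q)/[1-n]!$, producing the claimed value $-b_{-n+1}(a;q)/[-n+1]!$.

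There is no real obstacle here: the only point that deserves a sentence of justification is the convergence of the Laurent series on the contour $C_2$, which is exactly why the radius $c$ was chosen below both $2\xi_1$ and $1/(a(1-q))$ in the definition of $H_q$. Once that is in place, the computation is a one-line matching of powers of $z$.
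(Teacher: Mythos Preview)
Your proof is correct and follows essentially the same route as the paper: both reduce to a residue computation at the origin via the integral formula \eqref{EQ:integer} from Theorem~\ref{Thm:main}, and both read off the relevant Laurent coefficient from the generating function \eqref{GF}. The paper phrases the nonpositive-$n$ case via the Taylor derivative $(zg(z))^{(m)}(0)/m!$ rather than by directly matching powers, but this is only a cosmetic difference.
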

\begin{proof}
If $n\in\mathbb{N}$, $n>1$,    the integrand is an analytic function and the integral in \eqref{EQ:integer} is equal to zero. 
If $n=1$, $H_q(1,a)=-Res(g(z))(0)=-1$. If $n=-m+1$, $m=1,2,\ldots$,
then  from \eqref{GF}
\[H_q(-m+1,a) =-\frac{1}{m!} (zg(z))^{(m)}(0)=-\frac{b_m(a;q)}{[m]!}.\]

\end{proof}

 \begin{figure}[ht]
   \begin{center}
        \includegraphics[width=.6\textwidth ]{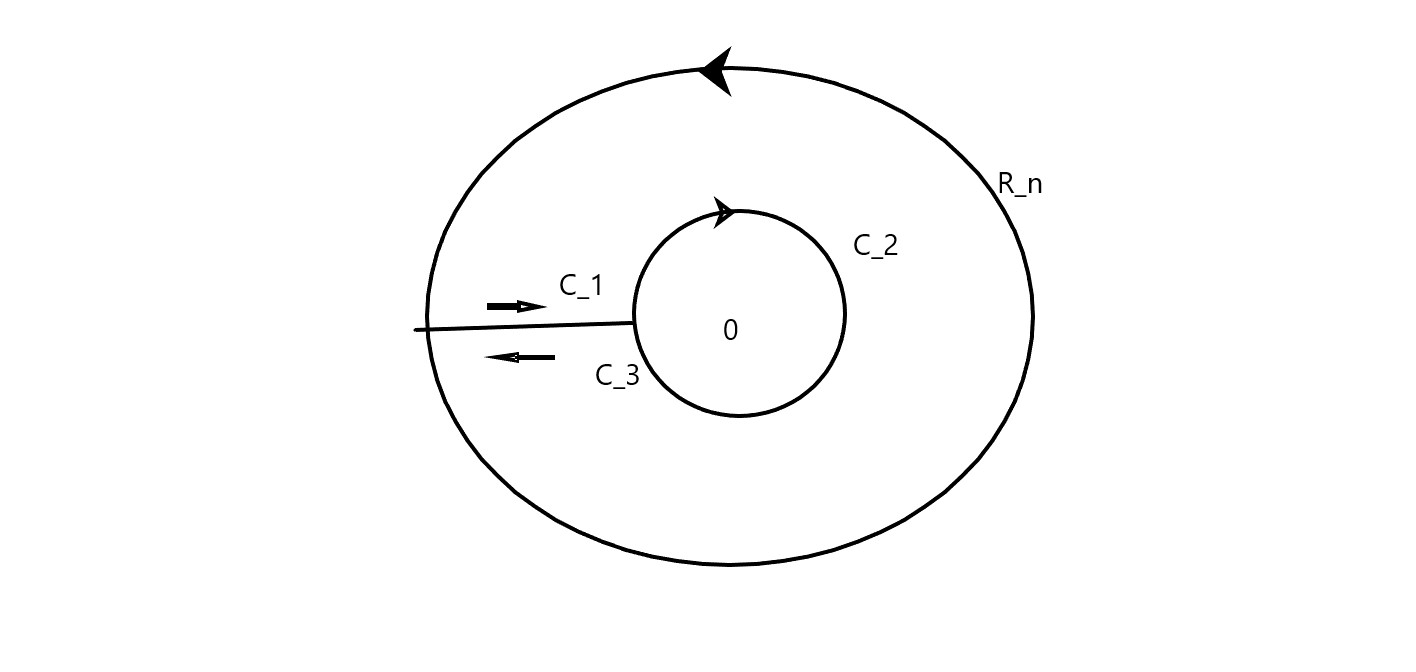} 
				\end{center}
        \caption{The contour $\Gamma_n$}
        \label{Fig2}
 \end{figure}

Now if we integrate the function $f(z)=z^{s-1}g(z)$, $g(z)=\frac{e_q(az)}{\E(z)-1}$ on the contour $\Gamma_n $, see Figure \ref{Fig2}, where the radius $R_n$ of the outer circle  $C_{R_n}$ is chosen as in Corollary \ref{Cor}. Then, 
the integral on the outer circle  of $\Gamma_n $ vanishes  as $n\to\infty$. Consequently, 
\[\lim_{n\to\infty}\int_{\Gamma_n}f(z)\,dz=\int_{\Gamma} f(z)\,dz.\] 
But 
from the Cauchy  integral formula,
\be\label{CIF}  \int_{\Gamma_n}f(z)dz=2\pi\,i\sum Res(f, z_k),\ee
where $\{z_k\}$ are the poles of $f$ that lie inside $\Gamma_n$.
This leads to the following theorem.
\begin{thm}\label{thm:bn}
For $a>0$ and $s\in\mathbb{C}$,
\[\begin{gathered}H_q(s,a)=\frac{1}{2\pi\,i}\int_{\Gamma}f(z)\,dz=2^{s-1}\sum_{k=1}^{\infty}(\pm i\xi_k)^{s-1}e_q(\pm 2ia\xi_k)\dfrac{ \text{Cos}_q\xi_k}{Sin_q'\xi_k}\\-\frac{1}{2(q;q)_{\infty}}\sum_{k=0}^{\infty}(-1)^k q^{k(k+1)/2}\left(\frac{q^{-k}}{a(1-q)}\right)^{s} \frac{E_q\left(-\frac{q^{-k}}{2a(1-q)}\right)}{\text{Sinh}_q(\frac{q^{-k}}{2a(1-q)})}.
\end{gathered}
\]
\end{thm}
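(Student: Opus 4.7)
The plan is to evaluate $H_q(s,a)$ by applying Cauchy's residue theorem to $f(z) = z^{s-1} e_q(az)/(\E(z)-1)$ on the closed contour $\Gamma_n$ and letting $n\to\infty$, as suggested by \eqref{CIF}. Corollary \ref{Cor} guarantees that the integral over the outer circle $C_{R_n}$ vanishes in the limit (for $R_n$ chosen in $\mathcal{A}_n\cap\mathcal{B}_{2n+2k_0}$ with $\nu = a(1-q)$), so
\[
H_q(s,a)\;=\;\sum_{z_\star}\mathrm{Res}(f,z_\star),
\]
the sum ranging over all poles of $f$ that lie outside the region enclosed by $\Gamma$.

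The key tool is the factorization identity
\[
\E(z)-1 \;=\; 2\,e_q(z/2)\,\Sinhq(z/2),
\]
which follows directly from $e_q(x)=1/E_q(-x)$ combined with $\Sinhq(w)=(E_q(w)-E_q(-w))/2$. This rewrites
\[
f(z)\;=\;\frac{z^{s-1}\,e_q(az)\,E_q(-z/2)}{2\,\Sinhq(z/2)},
\]
and exposes all external poles: they are either (i) simple zeros of $\Sinhq(z/2)$ at $z=\pm 2i\xi_k$ (via the parity relation $\Sinhq(w)=-i\Sin_q(iw)$, which sends the positive real zeros $\xi_k$ of $\Sin_q$ to $\pm i\xi_k$), or (ii) simple poles of $e_q(az)=1/(a(1-q)z;q)_\infty$ on the positive real axis at $z_k=q^{-k}/(a(1-q))$.

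For the residues at $z=\pm 2i\xi_k$, I would use $\Sinhq'(\pm i\xi_k)=\Sin_q'(\xi_k)$ (again from the parity identity together with the fact that $\Sin_q'$ is even) and $E_q(\mp i\xi_k)=\Cos_q \xi_k$ (from $E_q(iz)=\Cos_q z + i\,\Sin_q z$ at $\Sin_q \xi_k=0$). These yield
\[
\mathrm{Res}(f,\pm 2i\xi_k)\;=\;2^{s-1}(\pm i\xi_k)^{s-1}\,e_q(\pm 2ia\xi_k)\,\frac{\Cos_q\xi_k}{\Sin_q'\xi_k},
\]
which, summed over both signs, is exactly the first sum. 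For the residue at $z_k$ the pole comes entirely from $e_q(az)$; I would compute it via the derivative of $(y;q)_\infty$ at $y=q^{-k}$ and then multiply by the regular factor $z_k^{s-1}E_q(-z_k/2)/(2\Sinhq(z_k/2))$, collecting powers of $q$ into the exponent $k(k+1)/2$ and factoring out the prefactor $-1/(2(q;q)_\infty)$ to match the second series.

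The main obstacle is twofold. First, the vanishing of the integral on $C_{R_n}$: Corollary \ref{Cor} treats a quotient of similar shape, but a small adaptation is needed since the actual denominator is $\Sinhq(z/2)$ rather than $\Sinhq(z)$, which amounts to rescaling the annuli and tracking the extra factor $E_q(-z/2)$ in the numerator. Second, the algebraic bookkeeping of the residue at $z_k$ — signs, powers of $q$, and $q$-shifted factorials — must be carried out carefully to reach the compact form stated. Absolute convergence of the two resulting series is then ensured by Corollary \ref{Cor:B} (for the first sum, after absorbing the bounded factor $e_q(\pm 2ia\xi_k)$) and by Lemma \ref{3:witha} (for the second sum), which completes the derivation.
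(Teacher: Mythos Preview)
Your proposal is correct and follows essentially the same route as the paper: apply Cauchy's residue theorem on the expanding contour $\Gamma_n$, invoke Corollary~\ref{Cor} to kill the outer-circle contribution, and identify the simple poles at $z=\pm 2i\xi_k$ (from $\Sinhq(z/2)$) and at $z=q^{-k}/(a(1-q))$ (from $e_q(az)$), with Lemma~\ref{3:witha} handling convergence of the second series. In fact you supply considerably more detail than the paper's short proof---the factorization $\E(z)-1=2e_q(z/2)\Sinhq(z/2)$, the explicit residue computations via $\Sinhq'(\pm i\xi_k)=\Sin_q'(\xi_k)$ and $E_q(\mp i\xi_k)=\Cos_q\xi_k$, and the observation that Corollary~\ref{Cor} needs a minor rescaling to match the actual integrand---all of which are implicit in the paper's one-paragraph sketch.
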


\begin{proof}
The proof follows from the Cauchy's  Residue Theorem by noting that the function $f$ has simple poles at $z=\pm 2i\xi_k$ where $(\xi_k)_{k=1}^{\infty}$ is  the sequence of the positive zeros of $\text{Sin}_q\,z$ arranged in increasing order of magnitude, and at the sequence $\{\frac{q^{-k}}{a(1-q)},\; k\in\mathbb{N}_0\}$.  {The convergence of the second series on the right hand side follows from Lemma \ref{3:witha}.}
\end{proof}

Obviously, the value of the integral  $\int_{\Gamma}f(z) dz$ is independent of $c$, the radius of  the circle $C_2$,  as long as  $c< 2\xi_1$.

We define a $q$-analog  of the Dirichlet series 
\[F(s,a)=\sum_{k=1}^{\infty}\frac{e^{2\pi\,ia\,k}}{k^s},\quad \re s>0,\] by
\be\label{def-der} 
F_q(s,a):=\sum_{k=1}^{\infty}\frac{e_q(2ia\xi_k)}{\xi_k^s} \frac{\text{Cos}_q \xi_k}{\text{Sin}_q'\xi_k},\quad s\in\mathbb{C}.
\ee

One  can verify that $\lim_{q\to 1^{-}}F_q(s,a)=\frac{1}{\pi^s} F(s,a)$,
and 
\[F_q(s,q^m a)=\sum_{j=0}^{m}\qbinom{m}{j} q^{j \choose 2}(-2ia)^j F_q(s-j,a),\quad m\in\mathbb{N}.\]

\begin{prop}
For $s\in \mathbb{C}$ and $a>0$,
\[H_q(1-s,a)=(2i)^{s-1} F_q(s,a)+(-2i)^{s-1}F_q(s,-a)-R_q(s,a),\]
where 
$ F_q(s,a)$ 
 is the  $q$-analog of the Dirichlet  series defined in  \eqref{def-der} and
 \be
 R_q(s,a):=\frac{1}{2(q;q)_{\infty}}\sum_{k=0}^{\infty}(-1)^k q^{k(k+1)/2}\left(\frac{q^{-k}}{a(1-q)}\right)^{-s} \frac{E_q\left(-\frac{q^{-k}}{2a(1-q)}\right)}{\text{Sinh}_q(\frac{q^{-k}}{2a(1-q)})}. 
 \ee
\end{prop}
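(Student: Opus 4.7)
\medskip

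\noindent\textbf{Proof plan.} The identity follows by specializing Theorem \ref{thm:bn} to the argument $1-s$ in place of $s$ and reorganizing the resulting residue expansion into the three pieces named in the statement. All of the analytic content---the deformation of $\Gamma$ to $\Gamma_n$, the vanishing of the outer-circle contribution (Corollary \ref{Cor}), and the absolute convergence of the real-pole series (Lemma \ref{3:witha})---has already been carried out in the proof of Theorem \ref{thm:bn}, so only a bookkeeping argument remains.

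Concretely, the plan is to proceed in three steps. First, I would rewrite
\[
H_q(1-s,a)=\frac{1}{2\pi i}\int_{\Gamma}z^{-s}\,\frac{e_q(az)}{\E(z)-1}\,dz,
\]
deform $\Gamma$ to $\Gamma_n$, and pass to the limit $n\to\infty$, producing a residue sum with contributions from two disjoint families of simple poles of the integrand: the purely imaginary poles $\pm 2i\xi_k$ of $1/(\E(z)-1)$ (localized via the factorization $\E(z)-1=2\,\text{Sinh}_q(z/2)/E_q(-z/2)$), and the real poles $q^{-k}/(a(1-q))$, $k\geq 0$, of $e_q(az)$. Second, I would collect the residues at $\pm 2i\xi_k$, factor out the appropriate constant powers, and identify the remaining sums with $F_q(s,a)$ and $F_q(s,-a)$ directly from the definition \eqref{def-der}; the $e_q(\pm 2ia\xi_k)$-factors in Theorem \ref{thm:bn} match the corresponding factors in $F_q(s,\pm a)$ exactly. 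Third, I would identify the real-pole residue series, up to the overall minus sign, with $R_q(s,a)$ as defined in the statement.

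The only point really requiring care is the consistent choice of a branch for the multivalued factor $z^{-s}$ appearing in the integrand. This branch is pinned down by the cut along the negative real axis used to define $\Gamma$ (see Figure \ref{Fig-1}), so $\arg z\in(-\pi,\pi)$ and in particular $\arg(\pm 2i\xi_k)=\pm\pi/2$; the constants $(2i)^{s-1}$ and $(-2i)^{s-1}$ multiplying $F_q(s,\pm a)$ in the statement are thereby fixed unambiguously and carry opposite phases. Once the branches are in place, combining the three pieces produces the claimed identity; no new convergence analysis is needed, since the imaginary-pole sums converge absolutely by Corollary \ref{Cor:B} and the real-pole sum by Lemma \ref{3:witha}.
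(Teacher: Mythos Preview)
Your proposal is correct and matches the paper's (implicit) argument: the proposition is stated in the paper without proof precisely because it follows at once from Theorem \ref{thm:bn} by replacing $s$ with $1-s$ and regrouping the residue sum, exactly as you describe. The only analytic ingredients are those already established for Theorem \ref{thm:bn}, so the bookkeeping you outline is all that is needed.
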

A simple manipulation gives 
\[R_q(s,a)+(-1)^s R_q(s,-a)=-\frac{1}{(q;q)_{\infty}}\sum_{k=0}^{\infty}(-1)^k q^{k(k+1)/2} \left(\frac{q^{-k}}{a(1-q)}\right)^{-s}.\]
\begin{prop}\label{Cor:5} For $s\in\mathbb{C}$, $a>0$

\[\begin{gathered}H_q(s,a)=2^{s}\sin \frac{\pi}{2}s\sum_{k=1}^{\infty}\frac{\xi_k^{s-1}}{(-4a^2(1-q)^2\xi_k^2;q^2)_{\infty}} \dfrac{\text{Cos}_q(2a\xi_k) \text{Cos}_q\xi_k}{Sin_q'\xi_k} \\
+2^{s}\cos \frac{\pi}{2}s\sum_{k=1}^{\infty}\frac{\xi_k^{s-1}}{(-4a^2(1-q)^2\xi_k^2;q^2)_{\infty}} \dfrac{\text{Sin}_q(2a\xi_k) \text{Cos}_q\xi_k}{Sin_q'\xi_k} 
-R_q(1-s,a).
\end{gathered}\]

\end{prop}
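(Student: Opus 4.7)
The plan is to deduce Proposition~\ref{Cor:5} as a regrouping of Theorem~\ref{thm:bn} into a combination of series involving the real functions $\Cos_q(2a\xi_k)$ and $\Sin_q(2a\xi_k)$. First, pulling the common factor $\xi_k^{s-1}$ out of $(\pm i\xi_k)^{s-1}2^{s-1}$, Theorem~\ref{thm:bn} can be rewritten as
\[
H_q(s,a)=\sum_{k=1}^{\infty}\xi_k^{s-1}\frac{\Cos_q\xi_k}{\Sin_q'\xi_k}\bigl[(2i)^{s-1}e_q(2ia\xi_k)+(-2i)^{s-1}e_q(-2ia\xi_k)\bigr]-R_q(1-s,a),
\]
after identifying the second sum of Theorem~\ref{thm:bn} with $R_q(1-s,a)$ via the exponent relation $\bigl(q^{-k}/(a(1-q))\bigr)^{s-1}=\bigl(q^{-k}/(a(1-q))\bigr)^{-(1-s)}$.

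Next, the key $q$-exponential identity is
\[
e_q(iw)\pm e_q(-iw)=\frac{E_q(iw)\pm E_q(-iw)}{(-w^2(1-q)^2;q^2)_{\infty}},
\]
obtained by putting $e_q(\pm iw)=1/(\mp iw(1-q);q)_{\infty}$ over the common denominator $(iw(1-q);q)_{\infty}(-iw(1-q);q)_{\infty}$ and applying the classical factorization $(a;q)_{\infty}(-a;q)_{\infty}=(a^2;q^2)_{\infty}$. Combined with \eqref{S-C-def}, setting $w=2a\xi_k$ gives
\[
e_q(2ia\xi_k)\pm e_q(-2ia\xi_k)=\frac{2\Cos_q(2a\xi_k)\ \text{or}\ 2i\Sin_q(2a\xi_k)}{(-4a^2(1-q)^2\xi_k^2;q^2)_{\infty}},
\]
where the upper/lower choice on the right corresponds to the $+/-$ sign on the left.

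Third, writing $(\pm 2i)^{s-1}=2^{s-1}e^{\pm i\pi(s-1)/2}$, the bracket in the first display becomes
\[
2^{s-1}\!\left[\cos\tfrac{\pi(s-1)}{2}\bigl(e_q(2ia\xi_k)+e_q(-2ia\xi_k)\bigr)+i\sin\tfrac{\pi(s-1)}{2}\bigl(e_q(2ia\xi_k)-e_q(-2ia\xi_k)\bigr)\right].
\]
Substituting the identities of the previous step and using $\cos\tfrac{\pi(s-1)}{2}=\sin\tfrac{\pi s}{2}$ together with $\sin\tfrac{\pi(s-1)}{2}=-\cos\tfrac{\pi s}{2}$ (so that the $i\cdot i$ in the sine term produces a $-1$ which cancels the minus sign), this bracket reduces to $2^{s}\bigl[\sin\tfrac{\pi s}{2}\Cos_q(2a\xi_k)+\cos\tfrac{\pi s}{2}\Sin_q(2a\xi_k)\bigr]\big/(-4a^2(1-q)^2\xi_k^2;q^2)_{\infty}$, and summing against $\xi_k^{s-1}\Cos_q\xi_k/\Sin_q'\xi_k$ produces exactly the two series in the statement.

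The only delicate point, and the step I would expect to require most care, is the exponent bookkeeping that converts the second sum in Theorem~\ref{thm:bn} into $R_q(1-s,a)$ rather than $R_q(-s,a)$; once this is reconciled with the convention used in the definition of $R_q(s,a)$, the rest is routine algebraic manipulation. Convergence of each of the two split series is inherited from Theorem~\ref{thm:bn} term-by-term, since at each fixed $k$ we are merely forming a fixed complex linear combination of two absolutely convergent contributions, whose convergence was secured in the preceding section by Lemmas~\ref{lemma1}, \ref{lemma8}, \ref{lem:bounded}, and \ref{3:witha}.
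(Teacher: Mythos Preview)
Your proposal is correct and follows essentially the same route as the paper's proof: both start from Theorem~\ref{thm:bn} and split the bracket $(2i)^{s-1}e_q(2ia\xi_k)+(-2i)^{s-1}e_q(-2ia\xi_k)$ into its even and odd parts in $e_q(\pm 2ia\xi_k)$, which the paper phrases as ``calculating the real and imaginary part \ldots\ and noting that the imaginary part vanishes.'' Your explicit use of the identity $e_q(iw)\pm e_q(-iw)=\bigl(E_q(iw)\pm E_q(-iw)\bigr)/(-w^2(1-q)^2;q^2)_\infty$ is exactly the mechanism that produces the Pochhammer denominator, and your trigonometric shifts $\cos\tfrac{\pi(s-1)}{2}=\sin\tfrac{\pi s}{2}$, $\sin\tfrac{\pi(s-1)}{2}=-\cos\tfrac{\pi s}{2}$ are the right way to obtain the stated coefficients; you have also correctly flagged the off-by-one discrepancy between the exponent $s$ in the second sum of Theorem~\ref{thm:bn} and the exponent $-(1-s)=s-1$ implicit in $R_q(1-s,a)$, which is an inconsistency in the paper itself rather than in your argument.
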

\begin{proof}
The proof follows from Theorem \ref{thm:bn} by calculating the real and imaginary part  of the series 
\[\sum_{k=1}^{\infty}(\pm i\xi_k)^{s-1}e_q(\pm 2ia\xi_k)\dfrac{ \text{Cos}_q\xi_k}{Sin_q'\xi_k},\] 
and noting that the imaginary part vanishes.
\end{proof}

\begin{prop} For $a>0$ and $n\in\mathbb{N}$
\be
\begin{split}
\frac{b_{n+1}(a;q)}{[n+1]!}&=
2^{-n}\sin \frac{n\pi}{2}\sum_{k=1}^{\infty}\frac{ \xi_k^{-n-1}}{(-4a^2(1-q)^2\xi_k^2;q^2)_{\infty}}\frac{\text{Cos}_q2a\xi_k\text{Cos}_q\xi_k}{\text{Sin}'_q\xi_k}\\
&-2^{-n}\cos\frac{n\pi}{2}\sum_{k=1}^{\infty}\frac{\xi_k^{-n-1}}{(-4a^2(1-q)^2\xi_k^2;q^2)_{\infty}}\frac{\text{Sin}_q2a\xi_k\text{Cos}_q\xi_k}{\text{Sin}'_q\xi_k}\\&+\frac{1 }{2(q;q)_{\infty}}\sum_{k=0}^{\infty}(-1)^k q^{k(k-1)/2}\left(\frac{q^{-k}}{a(1-q)}\right)^{-n-1} \frac{E_q\left(-\frac{q^{-k}}{2a(1-q)}\right)}{\text{Sinh}_q(\frac{q^{-k}}{2a(1-q)})}.
\end{split}
\ee
\end{prop}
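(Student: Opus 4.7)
The proof will specialize the identity from Proposition \ref{Cor:5} to the integer $s=-n$ and then invoke the evaluation of $H_q$ at non-positive integers given in Proposition \ref{cor:1}. The plan is as follows.

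First, I substitute $s = -n$ (with $n\in\mathbb{N}$) into the formula in Proposition \ref{Cor:5}:
\[
H_q(s,a) = 2^{s}\sin\tfrac{\pi s}{2}\,\Sigma_1(s,a) + 2^{s}\cos\tfrac{\pi s}{2}\,\Sigma_2(s,a) - R_q(1-s,a),
\]
where $\Sigma_1$ and $\Sigma_2$ are the two Cos-type and Sin-type series appearing there (both carrying $\xi_k^{s-1}$). Using the parity relations $\sin(-n\pi/2) = -\sin(n\pi/2)$ and $\cos(-n\pi/2) = \cos(n\pi/2)$, this gives
\[
H_q(-n,a) = -2^{-n}\sin\tfrac{n\pi}{2}\,\Sigma_1(-n,a) + 2^{-n}\cos\tfrac{n\pi}{2}\,\Sigma_2(-n,a) - R_q(n+1,a),
\]
where now $\Sigma_1(-n,a)$ and $\Sigma_2(-n,a)$ carry the factor $\xi_k^{-n-1}$, matching exactly the first two sums in the target identity.

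Next, I invoke Proposition \ref{cor:1}, which states that for $n=0,1,2,\ldots$
\[
H_q(-n,a) = -\frac{b_{n+1}(a;q)}{[n+1]!}.
\]
Solving for the Bernoulli-like coefficient yields
\[
\frac{b_{n+1}(a;q)}{[n+1]!} = 2^{-n}\sin\tfrac{n\pi}{2}\,\Sigma_1(-n,a) - 2^{-n}\cos\tfrac{n\pi}{2}\,\Sigma_2(-n,a) + R_q(n+1,a),
\]
which is the claimed identity once the explicit series definition of $R_q(n+1,a)$ (from the proposition preceding Proposition \ref{Cor:5}) is substituted, giving the sum over $k\geq 0$ with the exponential ratio $E_q\bigl(-\tfrac{q^{-k}}{2a(1-q)}\bigr)/\Sinhq\bigl(\tfrac{q^{-k}}{2a(1-q)}\bigr)$ and the factor $(q^{-k}/a(1-q))^{-n-1}$.

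The only substantive point to mention is justification for validity at the integer value $s=-n$: since $H_q(s,a)$ is entire in $s$ by Theorem \ref{Thm:main}, and since Lemma \ref{3:witha} together with Lemma \ref{lem:bounded} (and Corollary \ref{Cor:B}) ensure absolute convergence of all three series appearing in Proposition \ref{Cor:5} at any complex $s$, the evaluation at $s=-n$ is legitimate and the interchange of limits/summations implicit in reading off the identity presents no difficulty. Hence no new estimates are needed beyond those already proved in Section~3, and the proof reduces to the substitution and arithmetic simplification described above.
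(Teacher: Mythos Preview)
Your proposal is correct and follows exactly the approach indicated by the paper, which simply states that the result is a direct consequence of Propositions~\ref{cor:1} and~\ref{Cor:5} and omits the details. Your write-up supplies precisely those details (the substitution $s=-n$, the parity of $\sin$ and $\cos$, and the identification of the $R_q$-term), together with a sensible remark on why the specialization to an integer $s$ is legitimate.
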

\begin{proof}
The proof is a direct consequence of propositions \ref{cor:1} and \ref{Cor:5} and is omitted.

\end{proof}
 
 \begin{prop} For $a>0$ and $n\in\mathbb{N}$

\be \begin{split}
\frac{b_{2n+1}(a;q)}{[2n+1]!}&=
2^{-2n}(-1)^{n+1}\sum_{k=1}^{\infty}\frac{ \xi_k^{-2n-1}}{(-4a^2(1-q)^2\xi_k^2;q^2)_{\infty}}\frac{\text{Sin}_q2a\xi_k\text{Cos}_q\xi_k}{\text{Sin}'_q\xi_k}\\
&+\frac{1 }{2(q;q)_{\infty}}\sum_{k=0}^{\infty}(-1)^k q^{k(k-1)/2}\left(\frac{q^{-k}}{a(1-q)}\right)^{-2n-1} \frac{E_q\left(-\frac{q^{-k}}{2a(1-q)}\right)}{\text{Sinh}_q(\frac{q^{-k}}{2a(1-q)})}.
\end{split}
\ee
 
 \end{prop}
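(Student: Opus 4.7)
The proposition is an immediate specialization of the preceding proposition (the one giving $b_{n+1}(a;q)/[n+1]!$ in terms of the two trigonometric series and the $R_q$ tail), obtained by restricting the free index to odd values. My plan is therefore short.

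\textbf{Step 1: Specialize the index.} In the preceding proposition, replace the index $n$ on the left-hand side by $2n$, so that the subscript on $b$ becomes $2n+1$ and the exponent $-n-1$ on $\xi_k$ and on $q^{-k}/(a(1-q))$ becomes $-2n-1$. Every hypothesis ($a>0$, $n\in\mathbb{N}$) is preserved, so the resulting identity is valid.

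\textbf{Step 2: Evaluate the trigonometric prefactors at $n\mapsto 2n$.} The sine prefactor becomes $\sin(2n\pi/2)=\sin(n\pi)=0$, which annihilates the series containing $\operatorname{Cos}_q(2a\xi_k)\operatorname{Cos}_q(\xi_k)/\operatorname{Sin}'_q(\xi_k)$. The cosine prefactor becomes $\cos(2n\pi/2)=\cos(n\pi)=(-1)^n$, and combining this with the overall sign $-2^{-2n}$ in front of the second sum of the previous proposition yields the coefficient $-2^{-2n}(-1)^n=2^{-2n}(-1)^{n+1}$ for the series containing $\operatorname{Sin}_q(2a\xi_k)\operatorname{Cos}_q(\xi_k)/\operatorname{Sin}'_q(\xi_k)$.

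\textbf{Step 3: Record the remaining $R_q$-term.} The last sum in the previous proposition depended on the index only through the exponent $-n-1$ on $q^{-k}/(a(1-q))$. After the substitution $n\mapsto 2n$, this exponent becomes $-2n-1$, and the whole expression $\frac{1}{2(q;q)_\infty}\sum_{k=0}^\infty (-1)^k q^{k(k-1)/2}\bigl(q^{-k}/(a(1-q))\bigr)^{-2n-1}\,E_q(-q^{-k}/(2a(1-q)))/\operatorname{Sinh}_q(q^{-k}/(2a(1-q)))$ is carried over unchanged. Its absolute convergence is already guaranteed by Lemma \ref{3:witha}.

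There is no genuine obstacle: the only thing to check carefully is the bookkeeping of the sign $-2^{-2n}(-1)^n = 2^{-2n}(-1)^{n+1}$ in Step 2, which matches the claimed coefficient. Combining Steps 1--3 yields exactly the displayed identity, completing the proof.
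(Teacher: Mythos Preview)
Your proposal is correct and matches the paper's approach: the paper states this proposition (and the next one, for $b_{2n}$) without any proof, so it is clearly intended as the immediate specialization $n\mapsto 2n$ of the preceding proposition, exactly as you carry out. Your sign bookkeeping in Step~2 is accurate.
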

 \begin{prop}\label{prop:2.9} For $a>0$ and $n\in\mathbb{N}$
 \be 
 \begin{split}
\frac{b_{2n}(a;q)}{[2n]!}&=
(-1)^{n+1}2^{-2n+1}\sum_{k=1}^{\infty}\frac{\xi_k^{-2n}}{{(-4a^2(1-q)^2\xi_k^2;q^2)_{\infty}}}\frac{\text{Cos}_q2a\xi_k\text{Cos}_q\xi_k}{\text{Sin}'_q\xi_k}\\&+\frac{1 }{2(q;q)_{\infty}}\sum_{k=0}^{\infty}(-1)^k q^{k(k-1)/2}\left(\frac{q^{-k}}{a(1-q)}\right)^{-2n} \frac{E_q\left(-\frac{q^{-k}}{2a(1-q)}\right)}{\text{Sinh}_q(\frac{q^{-k}}{2a(1-q)})}.
\end{split}
\ee
\end{prop}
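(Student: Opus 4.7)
The plan is to derive the formula by specializing the general expression for $H_q(s,a)$ from Proposition \ref{Cor:5} to $s = 1-2n$ and matching it against the value of $H_q(1-2n,a)$ furnished by Proposition \ref{cor:1}. This is a direct analog of how the preceding two propositions (for $b_{n+1}/[n+1]!$ and its odd specialization) were obtained; the even case simply kills a different term in the trigonometric sum.

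First, applying Proposition \ref{cor:1} with $-m+1 = 1-2n$ (so $m = 2n \geq 1$) gives
\[
H_q(1-2n,a) = -\frac{b_{2n}(a;q)}{[2n]!}.
\]
Next, I would substitute $s = 1-2n$ into the formula of Proposition \ref{Cor:5}. The trigonometric factors become
\[
\sin\tfrac{\pi(1-2n)}{2} = \cos(n\pi) = (-1)^n, \qquad \cos\tfrac{\pi(1-2n)}{2} = \sin(n\pi) = 0,
\]
so the series involving $\Sin_q(2a\xi_k)$ vanishes entirely, while the $\Cos_q(2a\xi_k)$ series survives with prefactor $2^{1-2n}(-1)^n$ and power $\xi_k^{s-1} = \xi_k^{-2n}$. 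The remainder becomes $-R_q(2n,a)$, whose explicit form matches the second sum in the claimed identity (with $(q^{-k}/(a(1-q)))^{-2n}$).

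Finally, equating the two expressions for $H_q(1-2n,a)$ and solving for $b_{2n}(a;q)/[2n]!$ flips the sign of the surviving trigonometric series (yielding $(-1)^{n+1}2^{-2n+1}$) and moves $R_q(2n,a)$ to the right-hand side with a positive sign, producing exactly the stated identity. Since every ingredient has already been justified — convergence of the main series by Corollary \ref{Cor:B}, convergence of the remainder series by Lemma \ref{3:witha}, and the two building-block propositions — there is no substantive obstacle; the proof is purely a bookkeeping verification of signs, powers of $2$, and trigonometric values at half-integer multiples of $\pi$. For this reason it is reasonable, as the authors note for the odd case, to omit the computation for brevity.
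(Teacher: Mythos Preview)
Your proposal is correct and follows precisely the approach the paper intends: the paper states that the general formula for $b_{n+1}(a;q)/[n+1]!$ is ``a direct consequence of Propositions \ref{cor:1} and \ref{Cor:5}'' and then lists the odd and even specializations (including Proposition \ref{prop:2.9}) without further proof, so your specialization $s=1-2n$ together with the evaluation $\sin\frac{\pi(1-2n)}{2}=(-1)^n$, $\cos\frac{\pi(1-2n)}{2}=0$ is exactly what is meant.
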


\begin{prop} For $n\in\mathbb{N}$, 
\be
\frac{b_{n+1}(1/2;q)}{[n+1]!}=2^{-n}\sin\frac{\pi\,n}{2}
\sum_{k=1}^{\infty}(\xi_k)^{-n-1}\frac{1}{\text{Sin}'_q\xi_k}.
\ee
Hence, $b_{2n+1}(1/2;q)=0$ and 
\[\frac{b_{2n}(1/2;q)}{[2n]!}=2^{-2n+1}(-1)^{n+1}\sum_{k=1}^{\infty}\frac{1}{\xi_k^{2n
}}\frac{1}{\text{Sin}'_q\xi_k}.\]

\end{prop}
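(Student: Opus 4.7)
The plan is to specialize the preceding proposition at the symmetric value $a = 1/2$, where all the formulas collapse dramatically due to the vanishing of $\text{Sin}_q(\xi_k)$. This converts the general identity into the clean one stated, and the two displayed consequences then fall out by a parity argument on $n$.

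First, setting $a = 1/2$ in the preceding proposition for $b_{n+1}(a;q)/[n+1]!$, one has $2a\xi_k = \xi_k$, so $\text{Sin}_q(2a\xi_k) = 0$ by definition of $\xi_k$. Hence the cosine-weighted sum (the middle term, with the factor $\cos\frac{n\pi}{2}$) vanishes identically, leaving only the sine-weighted sum and a remainder.

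Next, I would use the $q$-Pythagorean identity. From $E_q(iz)E_q(-iz) = (-(1-q)^2 z^2; q^2)_\infty$, which follows from the elementary identity $(a;q)_\infty(-a;q)_\infty = (a^2;q^2)_\infty$, and the definitions in \eqref{S-C-def}, one derives $\text{Cos}_q(z)^2 + \text{Sin}_q(z)^2 = (-(1-q)^2 z^2; q^2)_\infty$. Evaluated at $z = \xi_k$ this gives $\text{Cos}_q(\xi_k)^2 = (-(1-q)^2 \xi_k^2; q^2)_\infty$. In the surviving sum the numerator contains the product $\text{Cos}_q(2a\xi_k)\text{Cos}_q(\xi_k) = \text{Cos}_q(\xi_k)^2$ (at $a=1/2$), which exactly cancels the factor $(-4a^2(1-q)^2\xi_k^2; q^2)_\infty = (-(1-q)^2 \xi_k^2; q^2)_\infty$ in the denominator. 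What remains is the bare $\xi_k^{-n-1}/\text{Sin}'_q\xi_k$.

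Third, I would show that the entire remainder series vanishes. With $a = 1/2$ the numerator becomes $E_q\!\left(-\frac{q^{-k}}{1-q}\right) = (q^{-k}; q)_\infty$, and this is $0$ for every $k \geq 0$ because the factor $(1 - q^{k-k}) = 0$ appears in the infinite product. The denominator $\text{Sinh}_q(q^{-k}/(1-q))$ is nonzero on the positive reals (being $-\tfrac12 E_q(q^{-k}/(1-q)) \neq 0$), so every term of the remainder is $0/\text{nonzero} = 0$.

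Combining these three simplifications yields
\[
\frac{b_{n+1}(1/2;q)}{[n+1]!} = 2^{-n}\sin\frac{n\pi}{2}\sum_{k=1}^{\infty}\xi_k^{-n-1}\frac{1}{\text{Sin}'_q\xi_k},
\]
which is the main identity. The two displayed consequences then follow by parity: when $n$ is even, $\sin(n\pi/2)=0$, giving $b_{2n+1}(1/2;q)=0$ after renaming; when $n$ is odd, write $n = 2m-1$, so $2^{-n} = 2^{-2m+1}$ and $\sin\frac{(2m-1)\pi}{2} = (-1)^{m+1}$, producing the stated formula for $b_{2m}(1/2;q)/[2m]!$. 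The only nontrivial step is the cancellation via the $q$-Pythagorean identity in Step~2; the remaining parts are direct bookkeeping.
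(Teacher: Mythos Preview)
Your proposal is correct and follows the same route the paper has in mind: specialize Proposition~5.6 (the general formula for $b_{n+1}(a;q)/[n+1]!$) at $a=1/2$, using (i) $\Sin_q(\xi_k)=0$ to kill the $\cos\frac{n\pi}{2}$ sum, (ii) the $q$-Pythagorean identity $\Cos_q^2 z+\Sin_q^2 z=(-(1-q)^2z^2;q^2)_\infty$ to cancel $\Cos_q(\xi_k)^2$ against the Pochhammer factor, and (iii) the vanishing $E_q(-q^{-k}/(1-q))=(q^{-k};q)_\infty=0$ to eliminate the remainder series. One harmless slip: in your nonvanishing check you wrote $\text{Sinh}_q(q^{-k}/(1-q))=-\tfrac12 E_q(q^{-k}/(1-q))$; the sign should be $+\tfrac12$, but the conclusion that it is nonzero is unaffected.
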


In particular if $a=1/2$, we obtain 
\[\sum_{k=1}^{\infty}\dfrac{1}{Sin_q'\xi_k}=-\frac{1}{2}.\]

\begin{defn}
We define  a  $q$-analog of the Hurwitz zeta  function by
\be\label{Eq:zeta}
\zeta_q(s,a):=\Gamma_q(1-s) H_q(s,a),\quad s\in\mathbb{C}, \;a>0.
\ee
\end{defn}

\begin{thm}
The function $\zeta_q(s,a)$ is analytic for all $s$ except for a
simple pole at $s = 1$ with residue $-\frac{1-q}{\log q}$.
\end{thm}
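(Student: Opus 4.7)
The plan is to combine the two factors in the definition $\zeta_q(s,a) = \Gamma_q(1-s) H_q(s,a)$ and read off their singularity structure. By Theorem \ref{Thm:main}, $H_q(s,a)$ is already entire, so all possible singularities of $\zeta_q(s,a)$ must come from $\Gamma_q(1-s)$. Thus the first step is to recall the well-known fact that $\Gamma_q(x)$ is meromorphic on $\mathbb{C}$ with simple poles exactly at $x=0,-1,-2,\dots$; this is immediate from the product representation $\Gamma_q(x)=(1-q)^{1-x}(q;q)_\infty/(q^x;q)_\infty$, since the denominator has simple zeros precisely when $q^x\in\{1,q^{-1},q^{-2},\dots\}$. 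Translating via $x=1-s$, the potential poles of $\zeta_q(s,a)$ are confined to $s=1,2,3,\dots$.

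Second, I would invoke Proposition \ref{cor:1}: $H_q(n,a)=0$ for every integer $n\geq 2$. Since these are simple zeros (or higher order, which is all we need) of the entire factor and $\Gamma_q(1-s)$ has only a simple pole at each such point, the product $\Gamma_q(1-s)H_q(s,a)$ is analytic at $s=2,3,4,\dots$. This leaves $s=1$ as the only possible singularity.

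Third, to handle $s=1$ I would compute the residue of $\Gamma_q(1-s)$ there. Setting $s=1+\varepsilon$ and using $1-q^{-\varepsilon}=\varepsilon\log q+O(\varepsilon^2)$, the factor $(q^{-\varepsilon};q)_\infty = (1-q^{-\varepsilon})(q^{1-\varepsilon};q)_\infty$ in the denominator behaves like $\varepsilon(\log q)(q;q)_\infty$ to leading order, so
\[
\Gamma_q(-\varepsilon) = \frac{(1-q)^{1+\varepsilon}(q;q)_\infty}{(q^{-\varepsilon};q)_\infty} \sim \frac{1-q}{\varepsilon\log q}.
\]
Hence $\mathrm{Res}_{s=1}\Gamma_q(1-s)=(1-q)/\log q$. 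Combining with $H_q(1,a)=-1$ from Proposition \ref{cor:1} yields
\[
\mathrm{Res}_{s=1}\zeta_q(s,a)=H_q(1,a)\cdot\mathrm{Res}_{s=1}\Gamma_q(1-s)=-\frac{1-q}{\log q},
\]
as claimed; the pole is simple because both factors contribute at most simple singularities there.

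The only mildly delicate step is the residue computation for $\Gamma_q$, which requires the first-order expansion of $(q^x;q)_\infty$ at $x=0$; once that is in hand the argument is purely bookkeeping on the vanishing of $H_q$ at positive integers versus the simple poles of $\Gamma_q(1-s)$.
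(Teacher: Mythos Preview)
Your proof is correct and follows essentially the same approach as the paper: both argue that since $H_q(s,a)$ is entire, the only candidate singularities are the poles $s=1,2,3,\dots$ of $\Gamma_q(1-s)$, then use $H_q(n,a)=0$ for $n\ge 2$ (Proposition~\ref{cor:1}) to remove all but $s=1$, and finally compute the residue there via $H_q(1,a)=-1$ and $\lim_{s\to 1}(s-1)\Gamma_q(1-s)=(1-q)/\log q$. Your version simply spells out the residue calculation for $\Gamma_q$ in slightly more detail than the paper does.
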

\begin{proof}
 Since $H_q(s, a)$ is entire,  the only possible singularities of $\zeta_q(s, a)$ are the poles of $ \Gamma_q(1 -s)$, that is, the points $s = 1, 2, 3, \ldots$.  But Theorem \ref{Thm:main} shows that $\zeta_q(s,a)$ is analytic at $s = 2, 3,\ldots$, so $s=1$ is the only possible pole of $\zeta_q(s, a)$. From Corollary \ref{cor:1}, the residue of the pole at $s=1$ is equal to
 \[\lim_{s\to 1}(s-1)\Gamma_q(1-s)  H_q(s,a)=-\lim_{s\to 1} (s-1)\Gamma_q(1-s)=-\frac{1-q}{\log q}.\]
  
\end{proof}

\begin{defn}
We define a $q$-analog of the Dirichlet eta   function by
\be\label{zeta-def}
\eta_q(s)=-\sum_{k=1}^{\infty}\frac{1}{\xi_k^s}\frac{1}{\text{Sin}'_q\xi_k}
=\sum_{k=1}^{\infty}\frac{1}{\xi_k^s}\frac{(-1)^{k-1}}{|\text{Sin}'_q\xi_k|},\quad s\in\mathbb{C}.
\ee

\end{defn}

\begin{prop}

For $s\in\mathbb{C}$

  \be \label{def:5}\zeta_q(1-s,1/2)=-2^{1-s} \cos \frac{\pi}{2}s \,\Gamma_q(s)\eta_q(s).\ee

\end{prop}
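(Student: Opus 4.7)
The plan is to specialize Proposition~\ref{Cor:5} to $a=1/2$ and then apply the definition of $\zeta_q$ in \eqref{Eq:zeta}, which yields $\zeta_q(1-s,a)=\Gamma_q(s)H_q(1-s,a)$. After obtaining a closed-form evaluation of $H_q(s,1/2)$, the target identity will follow by replacing $s$ with $1-s$ and multiplying by $\Gamma_q(s)$.

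At $a=1/2$, the point $2a\xi_k=\xi_k$ is a zero of $\Sin_q$, so $\Sin_q(2a\xi_k)=0$ and the second ($\cos\tfrac{\pi s}{2}$) series in Proposition~\ref{Cor:5} vanishes identically. For the surviving first series, I would simplify the Pochhammer $(-4a^2(1-q)^2\xi_k^2;q^2)_\infty$ at $a=1/2$ to $(-(1-q)^2\xi_k^2;q^2)_\infty$, and apply the standard identity $(u;q)_\infty(-u;q)_\infty=(u^2;q^2)_\infty$ with $u=i(1-q)\xi_k$ to identify it with $E_q(i\xi_k)E_q(-i\xi_k)=\Cos_q^2\xi_k+\Sin_q^2\xi_k=\Cos_q^2\xi_k$, using $\Sin_q\xi_k=0$. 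The numerator factor $\Cos_q(2a\xi_k)\Cos_q\xi_k=\Cos_q^2\xi_k$ then cancels this denominator, so the first sum collapses to $\sum_{k=1}^\infty\xi_k^{s-1}/\text{Sin}'_q\xi_k$, which by \eqref{zeta-def} equals $-\eta_q(1-s)$.

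Next I will verify that the remainder $R_q(1-s,1/2)$ is identically zero. Its $k$-th summand involves $E_q\bigl(-\tfrac{q^{-k}}{1-q}\bigr)=(q^{-k};q)_\infty$, whose factor at index $j=k$ is $1-q^{0}=0$; meanwhile $\Sinhq\!\bigl(\tfrac{q^{-k}}{1-q}\bigr)=\tfrac{1}{2}(-q^{-k};q)_\infty$ is nonzero, ruling out any $0/0$ issue, so each summand is unambiguously zero. This vanishing is the subtlest step, as it is a genuinely $q$-analytic phenomenon with no classical analog ($e^x$ has no zeros, but $E_q$ does); I would verify it with care. Combining the above three observations gives $H_q(s,1/2)=-2^s\sin\tfrac{\pi s}{2}\,\eta_q(1-s)$. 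Substituting $s\mapsto 1-s$ and using $\sin\tfrac{\pi(1-s)}{2}=\cos\tfrac{\pi s}{2}$ yields $H_q(1-s,1/2)=-2^{1-s}\cos\tfrac{\pi s}{2}\,\eta_q(s)$; multiplying by $\Gamma_q(s)$ according to \eqref{Eq:zeta} produces the claimed functional equation.
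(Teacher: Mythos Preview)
Your proof is correct and follows the same approach as the paper, which simply states that the result follows from setting $a=1/2$ in Proposition~\ref{Cor:5} together with \eqref{Eq:zeta}. You have correctly filled in the details the paper omits: the vanishing of the $\cos\tfrac{\pi s}{2}$ series via $\Sin_q\xi_k=0$, the identification of the Pochhammer with $\Cos_q^2\xi_k$ via $(u;q)_\infty(-u;q)_\infty=(u^2;q^2)_\infty$, and the termwise vanishing of $R_q(1-s,1/2)$ because $E_q(-q^{-k}/(1-q))=(q^{-k};q)_\infty=0$.
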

\begin{proof}
The proof follows from \eqref{Eq:zeta} and  by setting $a=1/2$ in Proposition \ref{Cor:5}.
\end{proof}
\begin{prop}

\begin{eqnarray}\label{def:eta}\eta_q(2n)&=&2^{2n-1}(-1)^{n} \frac{b_{2n}(1/2;q)}{[2n]!}, \;n\in\mathbb{N}_0,\\\label{eta:negative}
\eta_q(-2n)&=&0,\quad  n\in\mathbb{N}. 
\end{eqnarray}
\end{prop}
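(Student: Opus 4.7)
The plan is to derive both identities from a single \emph{master relation} between $H_q(\cdot,1/2)$ and $\eta_q$ obtained by stripping the $\Gamma_q$-factor from \eqref{def:5}. Using $\zeta_q(1-s,1/2)=\Gamma_q(s)H_q(1-s,1/2)$ (which follows from \eqref{Eq:zeta} with $s\mapsto 1-s$), equation \eqref{def:5} can be rewritten as
\[
\Gamma_q(s)\,H_q(1-s,1/2) \;=\; -2^{1-s}\cos\tfrac{\pi s}{2}\,\Gamma_q(s)\,\eta_q(s).
\]
Since $\Gamma_q$ is a meromorphic function on $\mathbb{C}$ with only simple poles at $s=0,-1,-2,\ldots$ and no zeros, $1/\Gamma_q$ is entire and dividing through yields the clean identity
\[
H_q(1-s,1/2) \;=\; -2^{1-s}\cos\tfrac{\pi s}{2}\,\eta_q(s),\qquad s\in\mathbb{C},
\]
as an identity of entire functions (the right-hand side is entire because Lemma \ref{lemma8} implies that the series defining $\eta_q(s)$ converges absolutely for every complex $s$, and the left-hand side is entire by Theorem \ref{Thm:main}).

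For \eqref{def:eta}, I would specialize the master relation at $s=2n$ with $n\in\mathbb{N}_0$. Since $\cos(\pi n)=(-1)^n$, solving gives $\eta_q(2n) = (-1)^{n+1}\,2^{2n-1}\,H_q(1-2n,1/2)$. For $n\geq 1$ we have $1-2n\leq -1$, so the third branch of Proposition \ref{cor:1} yields $H_q(1-2n,1/2)=-b_{2n}(1/2;q)/[2n]!$, which after substitution is exactly \eqref{def:eta}. For $n=0$ the second branch of the same proposition gives $H_q(1,1/2)=-1$, hence $\eta_q(0)=1/2$; this agrees with $2^{-1}(-1)^0 b_0(1/2;q)/[0]!$ since $b_0(1/2;q)=1$ is immediate from the generating function \eqref{GF}.

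For \eqref{eta:negative}, I would specialize the master relation at $s=-2n$ with $n\in\mathbb{N}$, obtaining $\eta_q(-2n) = -H_q(1+2n,1/2)/\bigl(2^{1+2n}(-1)^n\bigr)$. Since $1+2n\geq 3\geq 2$, the first branch of Proposition \ref{cor:1} forces $H_q(1+2n,1/2)=0$, and as the prefactor is nonzero we conclude $\eta_q(-2n)=0$.

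The only delicate point in the argument is the first step: the cancellation of $\Gamma_q(s)$ in \eqref{def:5} must be justified at the poles of $\Gamma_q$. This is precisely where the argument for \eqref{eta:negative} needs care, because applying \eqref{def:5} naively at $s=-2n$ produces the indeterminate form $\Gamma_q(-2n)\,\eta_q(-2n)=\infty\cdot 0$. Multiplying the identity by the entire function $1/\Gamma_q$ \emph{before} specialization converts this into the unambiguous statement $H_q(1+2n,1/2)=-2^{1+2n}(-1)^n\,\eta_q(-2n)$, after which the vanishing of $H_q$ at integers $\geq 2$ delivers the conclusion without any further analysis.
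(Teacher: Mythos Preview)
Your argument is correct and, for \eqref{eta:negative}, essentially identical to the paper's: both derive the master relation $H_q(1-s,1/2)=-2^{1-s}\cos\tfrac{\pi s}{2}\,\eta_q(s)$ and then invoke the vanishing of $H_q$ at integers $\geq 2$ from Proposition~\ref{cor:1}.

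For \eqref{def:eta} there is a small organizational difference. The paper appeals to Proposition~\ref{prop:2.9} specialized to $a=1/2$ (i.e.\ the residue expansion of $b_{2n}(a;q)$), whereas you reuse the same master relation at $s=2n$ together with the third branch of Proposition~\ref{cor:1}. Both routes sit on the same contour-integral framework, so the distinction is cosmetic; your version has the virtue of treating the two identities uniformly. One simplification worth noting: the detour through $\Gamma_q$ and the discussion of its poles can be avoided entirely, since the master relation is precisely what Proposition~\ref{Cor:5} yields at $a=1/2$ before any $\Gamma_q$-factor is introduced (the $R_q$-term vanishes because $E_q(-q^{-k}/(1-q))=(q^{-k};q)_\infty=0$, and $(\Cos_q\xi_k)^2$ cancels the infinite product since $\Sin_q\xi_k=0$ forces $\Cos_q\xi_k=E_q(i\xi_k)$). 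Deriving it that way makes the analytic-continuation paragraph at the end of your proposal unnecessary.
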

\begin{proof}
The proof  of \eqref{def:eta} follows from Proposition \ref{prop:2.9} by the substitution $a=1/2$. To prove \eqref{eta:negative}, note that from \eqref{def:5}  and \eqref{Eq:zeta},
\[H_q(s,1/2)=-2^s \sin \frac{s\pi}{2}\,  \eta_q(1-s).\]
Consequently, from  Corollary \ref{Cor-1},

\[H_q(2n+1)=-2^{2n+1}(-1)^n \eta_q(-2n)=0, \; n\in\mathbb{N}. \] 

\end{proof}

\begin{rem}
    Ismail and Mansour in \cite{Ism-Man-2019} proved that 
    \begin{equation}\label{betan-values} \begin{gathered}\beta_1(q)=-\frac{1}{2},\;\beta_2(q)=\frac{q}{2^2}(-q;q)\frac{1}{[3]},\\\beta_4(q)=-\frac{q^4}{2^4}(-q;q)_2\frac{[2]}{[3][5]},\;\beta_6(q)=\frac{q^7}{2^6}(-q;q)_3\frac{[4]}{[3][7]}.
\end{gathered}
\end{equation}
In addition to the fact that $\beta_{2k+1}(q)=0$  for $k\in\mathbb{N}$.  They also proved that 
\be \label{bB}b_n(x;q)=\sum_{k=0}^{n}\qbinom{n}{k} \beta_{n-k}(q) x^k.\ee

 In fact, one can verify that
 \begin{eqnarray*} 
b_0(1/2;q)&=&1,\quad b_2(1/2;q)=-\frac{q^{3}}{4[3]},\\
b_4(1/2;q)&=&\frac{q^{4}}{2^4[3][5]}\left(-[2][4]+[3][5]\right)>0,\\
b_6(1/2;q)&=&\frac{q^7}{2^6[3][7]}\left[(-q;q)_3 [4]-(1+q^2+q^3)[3][7]\right]<0.
\end{eqnarray*}
Consequently, $\eta_q(2)=\frac{q^{3}}{2[3]!}>0$, 
\[\eta_q(4)=\frac{q^4}{2^6[3]([5]!)}\left([2][4]+[3][5]\right)>0,
\]

and 

\[\eta_q(6)=\frac{q^7}{2[3]\left([7]!\right)}\left[-(-q;q)_3 [4]+(1+q^2+q^3)[3][7]\right]>0. \]
%
 %

\medskip

   Also, The identity in \eqref{def:eta}   may look unexpected to the reader, since in the classical case we have the Bernoulli numbers on the right-hand side of \eqref{def:eta}.  However, our result is not far-removed from that  because in the classical  case there is a relation between the Bernoulli polynomials at $1/2$ and the Bernoulli numbers given by
   \[B_n(1/2)=(2^{-n+1}-1)\beta_n, \quad n\in\mathbb{N},\]
   see~\cite{Apostol}.
    
\end{rem}
\section{A $q$-Dirichlet eta functions associated with a $q$-analog of the Euler polynomials }\label{sec:Der-Euler}

In this section, we use  the generating function of the $q$-Euler polynomials
\[F_q(z,a):=\frac{2e_q(az)}{\E(z)+1}=\sum_{n=0}^{\infty}
\frac{e_n(a;q)}{[n]!}z^n,\]
and contour integration to  derive another $q$-analog of the Dirichlet eta  function,   
We consider the function defined by the contour integral
\be\label{Idef} I_q(s,a)=\frac{1}{2\pi\,i}\int_{\Gamma} z^{s-1} F_q(z,a)\, dz,\ee
where $\Gamma$ is a contour  similar to the contour in Figure \ref{Fig-1} but the radius of the circle $C_2$ is less than $\min\{2\eta_1,\frac{1}{a(1-q)}\}$, where $\eta_1$ is the smallest positive zeros of $\text{Cos}_qz$.
Similar to Theorem \ref{Thm:main}, we can prove the following theorem.
\begin{thm}
The function $I_q(s,a)$ is an  entire function and for $Re(s)>0$, we have
\be 
I_q(s,a)=\frac{2\sin \pi\,(s-1)}{\pi}\int_{0}^{\infty} r^{s-1}\frac{e_q(-ar)}{\E(-r)+1}\,dr,\quad \re s>0.
\ee
Moreover,
\be \label{Eq:SH}
\begin{split}
I_q(s,a)&=2^{s+1} \sin\frac{\pi}{2}s \sum_{k=1}^{\infty} \eta_k^{s-1}\dfrac{Cos_q(2a\eta_k) Sin_q\eta_k}{(-4a^2
(1-q)^2\eta_k^2;q^2)_{\infty} Cos_q'\eta_k}\\
&+2^{s+1} \cos\frac{\pi}{2}s \sum_{k=1}^{\infty}\eta_k^{s-1} \dfrac{Sin_q(2a\eta_k) Sin_q\eta_k}
{(-4a^2(1-q)^2\eta_k^2;q^2)_{\infty} Cos_q'\eta_k}\\
&-\frac{1}{(q;q)_{\infty}}\sum_{k=0}^{\infty}(-1)^k q^{k(k+1)/2}\left(\frac{q^{-k}}{a(1-q)}\right)^{s} \frac{E_q\left(-\frac{q^{-k}}{2a(1-q)}\right)}{\text{Cosh}_q(\frac{q^{-k}}{2a(1-q)})}.
\end{split}
\ee
\end{thm}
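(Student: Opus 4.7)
My proof proposal follows the blueprint already laid out for $H_q(s,a)$ in Theorem \ref{Thm:main}, Theorem \ref{thm:bn}, and Proposition \ref{Cor:5}, adapted to the two structural differences of $F_q(z,a)$: an explicit factor of $2$ in the numerator, and the denominator $\E(z)+1$ in place of $\E(z)-1$ (whose zeros are $\pm 2i\eta_k$ rather than $\pm 2i\xi_k$, and which does not vanish at $z=0$).

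To show $I_q(s,a)$ is entire and to derive the integral representation on $\re s>0$, I would parametrize $\Gamma=C_1\cup C_2\cup C_3$ and bound $|z^{s-1}F_q(z,a)|$ on the cut edges for $|s|\leq M$. The integrand is dominated there by $r^{M-1}e^{\pi M}\cdot 2e_q(-ar)/(\E(-r)+1)=r^{M-1}e^{\pi M}\cdot E_q(r/2)/[E_q(ar)\,\text{Cosh}_q(r/2)]$; boundedness of $E_q(r/2)/\text{Cosh}_q(r/2)$ follows by repeating the argument of Lemma \ref{lemma1} verbatim, since the dominant growth factor $(-r;q)_\infty$ governs both $\text{Sinh}_q(r)$ and $\text{Cosh}_q(r)$, and combined with the super-exponential decay of $1/E_q(ar)$ this yields uniform convergence on $|s|\leq M$ and hence entirety. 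For the integral representation, I would collapse $C_1$ and $C_3$ via $z=re^{\pm i\pi}$; since $F_q(z,a)$ is analytic at the origin (unlike the Bernoulli integrand), the $C_2$ contribution is $O(c^{\re s})$ and vanishes as $c\to 0$ for $\re s>0$. The two cut contributions combine to $4i\sin\pi(s-1)\int_0^\infty r^{s-1}e_q(-ar)/(\E(-r)+1)\,dr$, and dividing by $2\pi i$ gives the claimed prefactor $2\sin\pi(s-1)/\pi$.

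For the series expansion \eqref{Eq:SH}, I would deform $\Gamma$ outward to a closed contour $\Gamma_n$ with outer circle of radius $R_n\in\mathcal{A}_n\cap\mathcal{B}_{2n+2k_0}$, taking $\nu=1/(a(1-q))$ in Lemma \ref{lem_3-3}; Corollary \ref{Cor-1} then guarantees that the outer-circle integral vanishes as $n\to\infty$. Inside $\Gamma_n$ the integrand has simple poles at (i) $\pm 2i\eta_k$, since $\E(2i\eta)=(\Cos_q\eta+i\Sin_q\eta)/(\Cos_q\eta-i\Sin_q\eta)=-1$ iff $\Cos_q\eta=0$; and (ii) $z_k:=q^{-k}/(a(1-q))$, from the poles of $e_q(az)$. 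A direct differentiation gives $(\E(z)+1)'|_{z=2i\eta_k}=\Cos_q'\eta_k/\Sin_q\eta_k$, so $\mathrm{Res}_{z=2i\eta_k}\,1/(\E(z)+1)=\Sin_q\eta_k/\Cos_q'\eta_k$, the natural analog of $\Cos_q\xi_k/\Sin_q'\xi_k$ from the Bernoulli case. The residues at $z_k$ are computed exactly as in the proof of Theorem \ref{thm:bn}, with $\text{Sinh}_q$ replaced by $\text{Cosh}_q$ and the extra $2$ from the numerator of $F_q$ absorbed into the prefactor $-1/(q;q)_\infty$; their absolute convergence is the $\text{Cosh}_q$-analog of Lemma \ref{3:witha}, proved by the same estimate.

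To recover the real trigonometric form, I would use $e_q(w)=1/E_q(-w)$ and $E_q(\mp iz)=\Cos_q z\mp i\Sin_q z$, together with $(a;q)_\infty(-a;q)_\infty=(a^2;q^2)_\infty$, to write
\[e_q(\pm 2ia\eta_k)=\frac{\Cos_q(2a\eta_k)\pm i\Sin_q(2a\eta_k)}{(-4a^2(1-q)^2\eta_k^2;q^2)_\infty},\]
so that pairing the $\pm 2i\eta_k$ residues and invoking $i^{s-1}+(-i)^{s-1}=2\sin(\pi s/2)$ and $i\bigl(i^{s-1}-(-i)^{s-1}\bigr)=2\cos(\pi s/2)$ produces the two trigonometric sums in \eqref{Eq:SH}. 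The overall prefactor $2^{s+1}$ is the product $2^{s-1}\cdot 2\cdot 2$, corresponding respectively to $(2i\eta_k)^{s-1}$, the explicit $2$ in $F_q$, and the factor of $2$ from the conjugate pairing. The main technical hurdle is the vanishing of the outer-circle integral in the limit $n\to\infty$, which is precisely the content of Lemma \ref{lem_3-3} and Corollary \ref{Cor-1}; the Cosh-analogs of Lemma \ref{lemma1} and Lemma \ref{3:witha} are routine since the dominant asymptotics of $\text{Cosh}_q(r)$ coincide with those of $\text{Sinh}_q(r)$.
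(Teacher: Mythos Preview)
Your proposal is correct and follows precisely the approach the paper intends: the paper itself does not give a detailed proof of this theorem, stating only that it is proved ``similar to Theorem \ref{Thm:main}'', and your write-up fills in exactly those details---the Cosh-analog of Lemma \ref{lemma1} for entirety, the observation that analyticity of $F_q(z,a)$ at $z=0$ relaxes the convergence condition to $\re s>0$, the deformation to $\Gamma_n$ using Lemma \ref{lem_3-3} and Corollary \ref{Cor-1}, and the residue/conjugate-pairing computation mirroring Theorem \ref{thm:bn} and Proposition \ref{Cor:5}. Your accounting of the prefactor $2^{s+1}=2^{s-1}\cdot 2\cdot 2$ and the identity $e_q(\pm 2ia\eta_k)=\bigl(\Cos_q(2a\eta_k)\pm i\Sin_q(2a\eta_k)\bigr)/(-4a^2(1-q)^2\eta_k^2;q^2)_\infty$ are both correct.
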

If $s=n$, $n$ is an integer, the integrals on the lines $C_1$ and $C_3$ cancel each other and 
\be\label{Eq:47} I_q(n,a)=\frac{1}{2\pi\,i}\int_{C_2}z^{n-1}F_q(a,z)\,dz=
\left\{\begin{array}{cc}0,& n=1,2,\ldots,\\
-1,&n=0,\\
-\frac{e_{-n}(a;q)}{[-n]!},&n=-1,-2,\ldots.
\end{array}\right.
\ee
If we set $a=1/2$ and replace $s$ by $1-s$ in \eqref{Eq:SH}, we obtain
\be\label{eta*}   I_q(1-s,1/2)=2^{2-s} \cos\frac{\pi}{2}(1-s) \sum_{k=1}^{\infty}\frac{1}{ \eta_k^{s}}\frac{1}{ Cos_q'\eta_k}.\ee

\begin{defn}
We define a $q$-analog of the alternating  zeta function or Dirichlet eta function  by
\[\eta_q^{*}(s)=-\sum_{k=1}^{\infty}\frac{1}{\eta_k^s}\frac{1}{Cos'_q\eta_k}=\sum_{k=1}^{\infty}\frac{(-1)^{k-1}}{\eta_k^s}\frac{1}{|Cos'_q\eta_k|},\;s\in\mathbb{C}.\]
\end{defn}

From \eqref{Eq:47}, we obtain
\[\frac{e_{n-1}(1/2;q)}{[n-1]!}=2^{2-n}\cos_q\frac{(n-1)\pi}{2}\,\eta_q^{*}(n).\]
In particular, 
\be \label{def:eta2}\eta_q^{*}(2n+1)=2^{2n+1}(-1)^n\frac{e_{2n}(1/2;q)}{[2n]!},\quad n\in\mathbb{N}_0.\ee

From  \eqref{eta*} 
\be \label{eta**}I_q(1-s,1/2)=2^{2-s}\cos\left( \frac{(1-s)\pi}{2} \right) \eta_q^*(s).     \ee

Therefore, substituting with $s=-2n+1$, $n\in\mathbb{N}$, in \eqref{Eq:47}, we obtain 
\[\eta_q^*(-2n+1)=0,\quad n\in\mathbb{N}.\]

\begin{thm}
For $\re s>1 $
\bea\label{r1:ze} \zeta_q(s)&=&\sum_{j=0}^{\infty}\frac{(-1)^{j+1}}{[2j]!}q^{j(2j-1)}\eta_q(s-2j),\\
\label{r2:ze}\zeta_q^*(s)&=&\frac{1}{2}\sum_{j=0}^{\infty}\frac{(-1)^{j}}{[2j+1]!}q^{j(2j+1)}\eta_q^{*}(s-2j-1).\eea
In particular, if $s=2n$, then 
\bea\label{r1:ze1} \zeta_q(2n)&=&\sum_{j=0}^{n}\frac{(-1)^{j+1}}{[2j]!}q^{j(2j-1)}\eta_q(2n-2j),\\
\label{r2:ze2}\zeta_q^*(2n)&=&\frac{1}{2}\sum_{j=0}^{n-1}\frac{(-1)^{j}}{[2j+1]!}q^{j(2j+1)}\eta_q^{*}(2n-2j-1).\eea
\end{thm}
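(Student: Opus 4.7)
The plan is to derive both series identities by substituting the Taylor expansions from \eqref{S-C-def} of $\Cos_q$ and $\Sin_q$ into the defining series \eqref{zetadefinitions} for $\zeta_q$ and $\zeta_q^*$, and then interchanging the order of summation to recognize $\eta_q$ and $\eta_q^*$ from \eqref{etadefinitions1}. Concretely, for \eqref{r1:ze} I would write
\[
\zeta_q(s)=\sum_{k=1}^{\infty}\frac{1}{\Sin_q'\xi_k\,\xi_k^s}\sum_{j=0}^{\infty}(-1)^j\frac{q^{j(2j-1)}}{[2j]!}\xi_k^{2j}
\]
and swap the order of summation to obtain
\[
\zeta_q(s)=\sum_{j=0}^{\infty}\frac{(-1)^j q^{j(2j-1)}}{[2j]!}\sum_{k=1}^{\infty}\frac{1}{\Sin_q'\xi_k\,\xi_k^{s-2j}}=\sum_{j=0}^{\infty}\frac{(-1)^{j+1} q^{j(2j-1)}}{[2j]!}\eta_q(s-2j),
\]
which is \eqref{r1:ze}. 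The argument for \eqref{r2:ze} is entirely parallel: start from the odd-power expansion $\Sin_q\eta_k=\sum_j (-1)^j q^{j(2j+1)}\eta_k^{2j+1}/[2j+1]!$, substitute into $\zeta_q^*(s)$, and identify $\eta_q^*(s-2j-1)$ in the inner sum.

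The main technical step is justifying the interchange of summation, which requires absolute convergence of the resulting double series in the region $\re s>1$. For \eqref{r1:ze} this reduces to checking
\[
\sum_{k=1}^{\infty}\frac{1}{|\Sin_q'\xi_k|\,\xi_k^{\re s}}\sum_{j=0}^{\infty}\frac{q^{j(2j-1)}}{[2j]!}\xi_k^{2j}<\infty.
\]
The inner $j$-sum is an entire function of $\xi_k$; combining Lemma \ref{lemma8}, which bounds $1/|\Sin_q'\xi_k|$ by $O(C^{2k}q^{2k(k-1)})$, with the asymptotic $\xi_k\sim Aq^{-2k}$ from \eqref{Ass}, one verifies that the overall $k$-summand decays fast enough thanks to the factor $\xi_k^{-\re s}$ with $\re s>1$. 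This is essentially the same estimate that underlies Corollary \ref{Cor:B}, so no new machinery is needed. A parallel bound, using \eqref{Ass-2} and the $\Cos_q'\eta_k$ analog of Lemma \ref{lemma8} (established in the proof of Lemma \ref{lem:bounded}), handles \eqref{r2:ze}.

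For the specialization to $s=2n$, I would invoke the vanishing identities for $\eta_q$ and $\eta_q^*$ at non-positive integers. By \eqref{eta:negative}, $\eta_q(2n-2j)=0$ whenever $2n-2j\leq -2$, that is, whenever $j\geq n+1$, so the series \eqref{r1:ze} truncates at $j=n$ and produces \eqref{r1:ze1}. Similarly, the vanishing $\eta_q^*(-2m+1)=0$ for $m\in\mathbb{N}$ established at the end of Section \ref{sec:Der-Euler} forces $\eta_q^*(2n-2j-1)=0$ whenever $j\geq n$, truncating \eqref{r2:ze} at $j=n-1$ to give \eqref{r2:ze2}. The only genuine obstacle is the Fubini bookkeeping in the convergence argument; all the growth estimates needed are already available in the paper, so the proof is short once they are marshalled correctly.
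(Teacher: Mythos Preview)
Your proposal is correct and follows essentially the same approach as the paper: substitute the power series \eqref{S-C-def} for $\Cos_q\xi_k$ (resp.\ $\Sin_q\eta_k$) into the definition of $\zeta_q(s)$ (resp.\ $\zeta_q^*(s)$), interchange the order of summation, and identify $\eta_q(s-2j)$ (resp.\ $\eta_q^*(s-2j-1)$); the truncation at $s=2n$ then follows from the vanishing of $\eta_q$ at negative even integers and of $\eta_q^*$ at negative odd integers. Your treatment is in fact slightly more careful than the paper's, since you spell out the Fubini justification using Lemma~\ref{lemma8} and the asymptotics \eqref{Ass}, \eqref{Ass-2}, whereas the paper simply asserts the interchange.
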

\begin{proof}
Substituting with the series expansion of $\text{Cos}_q \xi_k$ in \eqref{S-C-def}, this gives 
\[\zeta_q(s)=-\sum_{k=1}^{\infty}\frac{1}{\xi_k^s}\frac{1}{\text{Sin}_q'(\xi_k)}\sum_{j=0}^{\infty}(-1)^j \frac{q^{j(2j-1)}}{[2j]!}\xi_k^{2j}.\]
Then changing the order of summation and noting that 
\[\eta_q(s-2j)=\sum_{k=1}^{\infty}\frac{1}{\xi_k^{s-2j}}\frac{1}{\text{Sin}_q'\xi_k},\]
yields \eqref{r1:ze}. The proof of \eqref{r2:ze} is similar and is omitted. 
The proof of  \eqref{r1:ze1} and \eqref{r2:ze2} follows from the fact $\eta_q$ vanishes at the negative even integers and
$\eta_q^*$ vanishes at the negative odd integers.  
\end{proof}
\begin{thm}
For $n\in\mathbb{N}$
\be\label{e1p} \eta_q(2n)=\frac{(-1)^{n+1}}{2[2n-1]!}+\sum_{k=0}^{n}\frac{(-1)^{k+1}}{[2k]!}\zeta_q(2n-2k).\ee
\be \label{e2p}\eta_q^*(2n+1)=\frac{(-1)^{n-1}}{2[2n]!}+\sum_{k=0}^{n-1}\frac{(-1)^{k}}{[2k+1]!}\zeta_q^{*}(2n-2k).\ee
\end{thm}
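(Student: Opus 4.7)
The plan is to convert both identities into expressions involving $q$-Bernoulli (resp.\ $q$-Euler) numbers by means of the closed-form evaluations established in Section~4, and then reduce each identity to a routine combinatorial manipulation. For \eqref{e1p}, the first step is to invoke \eqref{def:eta} to write $\eta_q(2n) = (-1)^n\,2^{2n-1}\,b_{2n}(1/2;q)/[2n]!$, and then apply the $q$-binomial expansion \eqref{bB} (after reindexing $m=2n-k$) to get $b_{2n}(1/2;q) = \sum_{m=0}^{2n}\qbinom{2n}{m}\beta_m(q)\,2^{m-2n}$. Next I split this sum according to the parity of $m$, using $\beta_0=1$, $\beta_1=-1/2$, and $\beta_{2\ell+1}=0$ for $\ell\ge 1$. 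The isolated $m=1$ contribution simplifies, after multiplication by the prefactor $(-1)^n 2^{2n-1}/[2n]!$, to exactly the boundary term $\frac{(-1)^{n+1}}{2[2n-1]!}$ appearing in \eqref{e1p}. The remaining even-index piece $\sum_{\ell=0}^{n}\qbinom{2n}{2\ell}\beta_{2\ell}(q)\,2^{2\ell-2n}$ is then rewritten by inverting Proposition~\ref{prop:4.4}, i.e.\ $\beta_{2\ell}(q) = (-1)^{\ell-1}[2\ell]!\,\zeta_q(2\ell)/2^{2\ell-1}$ for $\ell\ge 1$, extended to $\ell=0$ by the convention $\zeta_q(0):=-1/2$ (the value obtained by formally substituting $n=0$ into the closed form for $\zeta_q(2n)$, using $\beta_0=1$). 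Re-indexing $k=n-\ell$ identifies the resulting expression with $\sum_{k=0}^{n}\frac{(-1)^{k+1}}{[2k]!}\zeta_q(2n-2k)$, completing \eqref{e1p}.

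The proof of \eqref{e2p} runs in complete parallel, with the $q$-Euler polynomials and numbers replacing the $q$-Bernoulli ones. The one new input is the $q$-Euler analog of \eqref{bB}, namely $e_n(x;q) = \sum_{k=0}^{n}\qbinom{n}{k}\widetilde{E}_{n-k}(q)\,x^k$, which follows by multiplying the generating series $\sum_n\widetilde{E}_n(q)\,t^n/[n]!$ by $e_q(xt)$ and equating coefficients. Starting from \eqref{def:eta2} and \eqref{IDSec4} and splitting the expansion of $e_{2n}(1/2;q)$ by parity (using $\widetilde{E}_0=1$ and $\widetilde{E}_{2k}=0$ for $k\ge 1$), the $\widetilde{E}_0$ contribution becomes the boundary term $\frac{(-1)^{n-1}}{2[2n]!}$ of \eqref{e2p}, while the odd-indexed terms $\widetilde{E}_{2\ell+1}$ convert via \eqref{IDSec4} into multiples of $\zeta_q^*(2\ell+2)$, producing the stated sum after re-indexing $k=n-\ell-1$.

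The main obstacle is handling the boundary contributions together with the extension $\zeta_q(0):=-1/2$ needed to make the $k=n$ summand in \eqref{e1p} meaningful; without this extension the identity as stated is formally incomplete, since by Corollary~\ref{Cor:B} the series defining $\zeta_q(s)$ converges only for $\re s>1$. Apart from this interpretive step, the proof is an elementary combinatorial manipulation resting on Propositions~\ref{prop:4.2}--\ref{prop:4.4}, the identity \eqref{bB}, and its $q$-Euler analog derived above.
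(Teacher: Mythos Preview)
Your proposal is correct and follows essentially the same route as the paper's proof: both expand $b_{2n}(1/2;q)$ and $e_{2n}(1/2;q)$ via the $q$-binomial identities \eqref{bB} and its Euler analog, apply the evaluations \eqref{def:eta}, \eqref{def:eta2}, and \eqref{z1n}, and exploit the parity vanishing $\beta_{2k+1}=\widetilde{E}_{2k}=0$. Your added observation that the $k=n$ summand in \eqref{e1p} requires the convention $\zeta_q(0)=-1/2$ (obtained by formally setting $n=0$ in \eqref{z1n}) is a genuine clarification that the paper leaves implicit.
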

\begin{proof}
The proof of \eqref{e1p} follows by  setting $x=1/2$ and replacing $n$ by $2n$ in the identity 
\[b_n(x;q)=\sum_{k=0}^{n}\qbinom{n}{k} \beta_{n-k} x^k,\]
  and using   \eqref{def:eta} and \eqref{z1n} with taking into consideration the fact that  $\beta_{2k+1}(q)=0$ for all $k\in\mathbb{N}$.
The proof of \eqref{e2p} follows from the identity
\[e_n(x;q)=\sum_{k=0}^{n}\qbinom{n}{k}\widetilde{E}_{n-k}x^{k} ,\]
by replacing $n$ by $2n$,   $x$ by $1/2$,  using  \eqref{def:eta2} and \eqref{z1n}, and   taking into consideration the fact that $\widetilde{E}_{2k}=0$ for all  $k\in\mathbb{N}$.
\end{proof}

\section{A contour integration approach to  the $q$-analogs of the zeta function}
In  this section,  we show that the $q$-analog  of the zeta functions defined in \eqref
{IDSec4} can be derived by using contour integration as in Sections \ref{Sec:Dir-Bernoulli} and  \ref{sec:Der-Euler}. 
We start with  the function $H_q(s,0)$ defined in \eqref{Hdef}.  In this  case, the contour $\Gamma$ is the same as in Fig~\ref{Fig-1}, but the radius $c$ of the circle $C_2$ is less than $2\xi_1$.  We can prove the following result:
\begin{thm}
The function $H_q(s,0)$ is analytic for $\re s<0$ and has the representation 
\be\label{Eq:0}
H_q(s,0)=\frac{\sin\pi s}{\pi}\int_{c}^{\infty} r^{s-1}\frac{dr}{1-\varepsilon_q(-r)}+\frac{1}{2\pi\,i}\int_{C_2} z^{s-1}\frac{1}
{\varepsilon_q(z)-1}\,dz,
\ee
where $\E(z)$ is the function defined in \eqref{ne}.
\end{thm}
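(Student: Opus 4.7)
The plan is to adapt the argument in the proof of Theorem \ref{Thm:main} to the boundary case $a=0$. The absence of the factor $e_q(az)$, which provided exponential decay at infinity along $C_1$ and $C_3$ when $a>0$, means convergence now has to come entirely from the $z^{s-1}$ factor, forcing the restriction $\re s<0$.

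First, I split the contour integral defining $H_q(s,0)$ as the sum of integrals over $C_1$, $C_2$, and $C_3$. The middle piece $\frac{1}{2\pi i}\int_{C_2} z^{s-1}/(\E(z)-1)\,dz$ is entire in $s$: the circle $C_2$ has radius $c<2\xi_1$, and $\E(z)-1=z+O(z^2)$ has no zero in $0<|z|<2\xi_1$ (this is precisely the radius of convergence of the $q$-Bernoulli generating function \eqref{GF}), so the integrand is holomorphic on a neighbourhood of $C_2$ and entire in $s$. This piece is exactly the second summand in \eqref{Eq:0}. The contributions from $C_1$ and $C_3$ are evaluated as in the proof of Theorem \ref{Thm:main} with $a=0$, by using the branch values $z^{s-1}=r^{s-1}e^{\pm i\pi(s-1)}$ and the identity $-e^{i\pi(s-1)}+e^{-i\pi(s-1)}=2i\sin \pi s$, giving
\[
\frac{1}{2\pi i}\left(\int_{C_1}+\int_{C_3}\right)\frac{z^{s-1}}{\E(z)-1}\,dz=\frac{\sin \pi s}{\pi}\int_c^\infty\frac{r^{s-1}}{1-\E(-r)}\,dr,
\]
which is the first summand in \eqref{Eq:0}.

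The main obstacle is to verify that this improper integral converges uniformly on compact subsets $K\subset\{\re s<0\}$, so as to deliver an analytic function there. On such a $K$, one has $\re s\le\sigma<0$ and $|\text{Im}\,s|\le M$, whence $|r^{s-1}|\le r^{\sigma-1}e^{\pi M}$; it therefore suffices to show that $1/|1-\E(-r)|$ is uniformly bounded on $[c,\infty)$. Writing
\[
\E(-r)=\frac{E_q(-r/2)}{E_q(r/2)}=\prod_{k=0}^{\infty}\frac{1-(1-q)rq^k/2}{1+(1-q)rq^k/2},
\]
the elementary inequality $|1-x|<1+x$ for $x>0$ gives $|\E(-r)|<1$ pointwise, hence $|1-\E(-r)|>0$ on $(0,\infty)$. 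A refinement of this argument, in the same spirit as Lemma \ref{lemma1}, should establish a uniform gap $|1-\E(-r)|\ge c_0>0$ for $r\ge c$: the dominant contribution to $\log|\E(-r)|$ comes from indices $k$ near the transition $q^k\sim 2/((1-q)r)$, and a direct inspection of those factors shows that $|\E(-r)|$ stays bounded away from $1$ as $r\to\infty$. Granted this bound, $\int_c^\infty r^{\sigma-1}\,dr<\infty$ yields absolute and uniform convergence on $K$, Morera's theorem delivers analyticity on $\{\re s<0\}$, and \eqref{Eq:0} follows.
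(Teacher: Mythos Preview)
Your overall structure is exactly that of the paper: split $\Gamma$ into $C_1,C_2,C_3$, note that the $C_2$-integral is entire in $s$, and combine the two edges via $-e^{i\pi(s-1)}+e^{-i\pi(s-1)}=2i\sin\pi s$ to produce the first term in \eqref{Eq:0}. The paper's proof is equally terse and reduces everything to the single claim that $1/(1-\E(-r))$ is bounded for $r\ge c$.

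The one genuine gap in your argument is precisely that claim. Your pointwise inequality $|\E(-r)|<1$ is correct, but it only gives $1-\E(-r)>0$ for each $r>0$; it does \emph{not} prevent $\E(-r)$ from accumulating to $1$ along a subsequence $r_n\to\infty$, and the phrase ``a refinement \ldots\ should establish a uniform gap'' is not a proof. The clean way to close this---and the connection you nearly made by invoking Lemma~\ref{lemma1}---is the algebraic identity
\[
\frac{1}{1-\E(-r)}=\frac{E_q(r/2)}{E_q(r/2)-E_q(-r/2)}=\frac{E_q(r/2)}{2\,\Sinhq(r/2)},
\]
obtained from $e_q(-r/2)=1/E_q(r/2)$. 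This is \emph{exactly} the function whose boundedness on $(0,\infty)$ was established in Lemma~\ref{lemma1}, so no new estimate is needed. With this identity in place, $\int_c^\infty r^{\sigma-1}\,dr<\infty$ for $\sigma<0$ gives the required uniform convergence on compacta of $\{\re s<0\}$, and your argument is complete and coincides with the paper's.
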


\begin{proof}
We can show that on $C_1$ and $C_3$, the function  $\frac{1}{1-\varepsilon_q(-r)}$ is bounded for $r\geq c$, and therefore the integral $\int_{c}^{\infty}r^{\re s-1} \frac{1}{1-\varepsilon_q(-r)}\, dr$ converges for $\re s<0$ and \eqref{Eq:0} follows.
\end{proof}
Consequently, if $s$ is an integer, then 
\be 
H_q(n,0)=\frac{1}{2\pi\,i}\int_{C_2} z^{n-1}\frac{1}{1-\varepsilon_q(z)}\,dz=\left\{\begin{array}{cc} 0,&\, n=2,3,\ldots,\\1,&n=1,\\
-\frac{\beta_{-n+1}(q)}{[n+1]!},& n=-1,-2,\ldots. 
 \end{array}\right.
\ee
Now we consider the integral on $\Gamma_n$ as in Figure \ref{Fig2} but $R_n$ is chosen to satisfy $\xi_n<R_n<\xi_{n+1}$. So
we can prove that  for $\re s<0$,
\be \begin{split}\lim_{n\to\infty}\int_{\Gamma_n}z^{s-1}\frac{1}{\varepsilon_q(z)-1}\,dz&=\int_{\Gamma}z^{s-1}\frac{1}{\varepsilon_q(z)-1}\,dz\\
&= 2\pi \,i\lim_{n\to\infty}\sum_{k=0}^{n}\text{Residue} \left(z^{s-1}\frac{1}{\varepsilon_q(z)-1}\right)(\pm 2i\xi_k)\\
&=2\pi \,i\lim_{n\to\infty}2^{s}\sin\frac{\pi}{2}s\sum_{k=1}^{n}\xi_{k}^{s-1}\frac{\text{Cos}_q\xi_k}{\text{Sin}'_q \xi_k}\\
&=2\pi \,i2^{s}\sin\frac{\pi}{2}s\sum_{k=1}^{\infty}\xi_{k}^{s-1}\frac{\text{Cos}_q\xi_k}{\text{Sin}'_q \xi_k}
\end{split}\ee
Therefore, for $\re s>1$.

\be H_q(1-s,0)=2^{1-s}\sin\frac{\pi}{2}(1-s)\zeta_q(s).\ee

This leads to the result 
\[\zeta_q(2n)=(-1)^{n-1} 2^{2n-1} H_q(1-2n,0)=(-1)^n 2^{2n-1} \frac{\beta_{2n}}{[2n]!},\]

which coincides with \eqref{z1n}.
Similarly,  we calculate the contour integral $I_q(s,a)$ defined in \eqref{Idef} when $a=0$. I.e.

\[I_q(s,0)=\frac{1}{2\pi\,i}\int_{\Gamma }z^{s-1}\frac{2}{\varepsilon_q(z)+1}\, dz,\]
where $\Gamma$ is the contour defined in Figure \ref{Fig-1} but the radius $c$ of the circle $C_2$ is less that $2\eta_1$. 

If $\re s<0$, then $I_q(s,0)$ is analytic and can be represented as 
\[I_q(s,0)=2\frac{\sin \pi\,s}{\pi}\int_{c}^{\infty} \frac{r^{s-1}}{\varepsilon_q(-r)+1 }\,dr+\frac{1}{2\pi\,i}\int_{C_2}z^{s-1}\frac{2}{\varepsilon_q(z)+1}\,dz.\]
If $s$ is an integer, then 
\be
 \label{Eqf}I_q(n,0)=\left\{\begin{array}{cc}zero,& n\in\mathbb{N},\\
  2,& n=0\\ -\frac{\widetilde{E}_{-n}}{[-n]!},& n=-1,-2,-3,\ldots\end{array}\right.
  \ee
  
If we consider the contour $\Gamma_n$ in Figure \ref{Fig2}, where the radius $R_n$ of the outer circle satisfies 
$\eta_n< R_n<\eta_{n+1}$.
We can prove that  for $\re s<0$, 
\[\lim_{n\to\infty}\int_{\Gamma_n}z^{s-1}\frac{2}{\varepsilon_q(z)+1}=I_q(s,0).\]
Therefore, using the Cauchy Residue Theorem, we can prove that 
\[I_q(s,0)=2^{s+1}\sin \frac{\pi}{2}s \sum_{k=1}^{\infty}\eta_k^{s-1}\frac{Sin_q\eta_k}{Cos'_q\eta_k},\;Re s<0.\]
Therefore,
\[I_q(1-s,0)=-2^{2-s}\sin\frac{\pi}{2}(1-s)\;\zeta_q^*(s),\;\re s>1.\]
Consequently, from \eqref{Eqf}
\[ \zeta_q^{*}(2n)=(-1)^n 2^{2n-2}\frac{\widetilde{E}_{2n-1}}{[2n-1]!},\]
which coincides with \eqref{z1n}.



\subsection*{Acknowledgments}Zeinab Mansour would like to thank Abdus Salam  International Center for Theoretical  Physics, Trieste, Italy, for support and hospitality  during  this work. We  would like to thank Professor Mourad Ismail for many helpful discussions and for drawing our  attention to the reference \cite{Ismail-82}.  %

\end{document}